\newtheorem{anyprop}{Anyprop}[section]
\newtheorem{theorem}[anyprop]{Theorem}
\newtheorem{lemma}[anyprop]{Lemma}
\newtheorem{proposition}[anyprop]{Proposition}
\newtheorem{corollary}[anyprop]{Corollary}
\theoremstyle{definition}
\newtheorem{algorithm}[anyprop]{Algorithm}
\newtheorem{example}[anyprop]{Example}
\newtheorem{problem}[anyprop]{Problem}
\newtheorem{question}[anyprop]{Question}
\newtheorem{remark}[anyprop]{Remark}
\newtheorem*{acknowledgement}{Acknowledgement}
\theoremstyle{remark}
\numberwithin{equation}{section}
\begin{document}

\title[Semistable Vector Bundles and Tannaka duality ]
{Semistable vector bundles and Tannaka duality from a computational point of view}

\author[Almar Kaid and Ralf Kasprowitz]{Almar Kaid and Ralf Kasprowitz}

\address{Parkstieg 6, 22143 Hamburg, Germany}

\email{akaid@uni-osnabrueck.de}

\address{Universit\"at Paderborn, Fakult\"at f\"ur Elektrotechnik, Informatik und Mathematik,
Institut f\"ur Mathematik, Warburger Str. 100, 33098 Paderborn, Germany}

\email{kasprowi@math.upb.de}





\begin{abstract}
We develop a semistability algorithm for vector bundles which are given as a kernel of a surjective morphism between splitting bundles on the projective space $\PP^N$ over an algebraically closed field $K$. This class of bundles is a generalization of syzygy bundles. We show how to implement this algorithm in a computer algebra system. Further we give applications, mainly concerning the computation of Tannaka dual groups of stable vector bundles of degree $0$ on $\PP^N$ and on certain smooth complete intersection curves. We also use our algorithm to close an open case left in a recent work of 
L. Costa, P. Macias Marques and R. M. Mir\'o-Roig regarding the stability of the syzygy bundle of general forms. Finally, we apply our algorithm 
to provide a computational approach to tight closure. All algorithms are implemented in the computer algebra system CoCoA.
\end{abstract}

\maketitle

Mathematical Subject Classification (2010): primary: 14J60, 14Q15; \\
secondary: 13P10

Keywords: Semistable vector bundle, syzygy bundle, Tannaka duality, monodromy group, tight closure

\section{Introduction} \label{introduction}

The notion of slope-(semi)stability for vector bundles on a smooth projective varieties over an algebraically closed field $K$, as introduced by D. Mumford in the case of curves and generalized by F. Takemoto to higher dimensional varieties, is a very important tool in algebraic geometry. Unfortunately, for a concretely given vector bundle it is often very difficult to decide whether it is semistable or even stable. In this paper we develop an algorithm to determine computationally the semistability of certain vector bundles on the projective space $\PP^N$. Throughout this paper we assume that $N \geq 2$, since for $N=1$ by the Theorem of A. Grothendieck every vector bundle splits as a direct sum of line bundles. We restrict ourselves  to vector bundles which are given as a kernel of a surjective morphism between splitting bundles, i.e., vector bundles $\shE$ which sit in a short exact sequence $$0 \lto \shE \lto \bigoplus_{i=1}^n \mathcal{O}_{\PP^N}(a_i) \stackrel{\varphi}\lto \bigoplus_{j=1}^m\mathcal{O}_{\PP^N}(b_j) \lto 0.$$ We call such bundles \emph{kernel bundles}. For instance, by the theorem of Horrocks every non-split vector bundle on $\PP^2$ admits such a presentation. The morphism $\varphi$ which defines $\shE$ is given by an $m \times n$ matrix $\shM=(a_{ji})$, where the entries $a_{ji} \in R:=K[X_0 \komdots X_N]$ are homogeneous polynomials of degrees $b_j - a_j$. Special instances ($m=1$ and $b_1 = 0$) of kernel bundles are the so-called \emph{syzygy bundles} $\Syz(f_1 \komdots f_n)$ for $R_+$-primary homogeneous polynomials $f_1 \komdots f_n$ (i.e., $\sqrt{(f_1 \komdots f_n)} = R_+$), i.e., a syzygy bundle has a presenting sequence 
$$0 \lto \Syz(f_1 \komdots f_n) \lto \bigoplus_{i=1}^n \mathcal{O}_{\PP^N}(-d_i) \stackrel{f_1 \komdots f_n}\lto \mathcal{O}_{\PP^N} \lto 0,$$
where $d_i = \deg(f_i)$. Due to their explicit nature, kernel bundles and syzygy bundles are suitable for direct computations, in particular 
Gr\"obner basis methods and combinatorics. But still in general, not much is known about (semi)stability of kernel bundles or even syzygy bundles. One of the most important results in this direction, due to H. Brenner, is a combinatorial criterion for (semi)stability of syzygy bundles given by monomial families:

\begin{theorem}[Brenner] \label{monomialcase}
Let $K$ be a field, $R:=K[X_0 \komdots X_N]$ and let $f_i=X^{\sigma_i}$ denote $R_+$-primary monomials of degrees $d_i=|\sigma_i|$ in $K[X_0 \komdots X_N]$, $i=1 \komdots n$. Suppose that for every subset $J \subseteq I := \{1 \komdots n\}$, $|J| \geq 2$, the inequality $$\frac{d_J - \sum_{i \in J} d_i}{|J|-1} \leq \frac{-\sum_{i \in I} d_i}{n-1}$$ holds, where $d_J$ is the degree of the highest common factor of $f_i$, $i \in J$. Then the syzygy bundle $\Syzfn$ is semistable $($and stable if $<$ holds$)$. \end{theorem}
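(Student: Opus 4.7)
The plan is to prove the theorem in three steps. First, I would compute the slope of the candidate destabilizing subsheaves $\Syz(f_i : i \in J)$ attached to subsets $J\subseteq I$ and verify that the hypothesis is exactly the slope comparison $\mu(\Syz(f_i:i\in J))\le\mu(\Syz(f_1,\ldots,f_n))$ for all $|J|\ge 2$, with strict inequality in the case of stability. Second, use the torus action on $\PP^N$ arising from the monomial structure to reduce the semistability test to torus-invariant subsheaves. Third, classify the torus-invariant saturated subsheaves of $\shE:=\Syz(f_1,\ldots,f_n)$ and match them with the combinatorial candidates from step one.

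For the first step, from the defining sequence one reads off that $\shE$ has rank $n-1$ and degree $-\sum_{i\in I}d_i$. For $J\subseteq I$ with $|J|\ge 2$, the inclusion $\bigoplus_{i\in J}\mathcal{O}_{\PP^N}(-d_i)\hookrightarrow\bigoplus_{i\in I}\mathcal{O}_{\PP^N}(-d_i)$ carries $\Syz(f_i:i\in J)$ into $\shE$. Writing $f_i=h\cdot g_i$ with $h=\gcd(f_i:i\in J)$ of degree $d_J$, the image of $\bigoplus_{i\in J}\mathcal{O}_{\PP^N}(-d_i)\to\mathcal{O}_{\PP^N}$ is the ideal sheaf of $(g_i:i\in J)$ shifted by $-d_J$, and coprimality of the $g_i$ makes this ideal sheaf trivial outside a closed set of codimension $\ge 2$. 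Hence $\Syz(f_i:i\in J)$ has rank $|J|-1$ and degree $d_J-\sum_{i\in J}d_i$, so the theorem's inequality translates into $\mu(\Syz(f_i:i\in J))\le\mu(\shE)$.

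For the second step, since every $f_i=X^{\sigma_i}$ is a monomial, the bundle $\shE$ inherits a natural equivariant structure for the torus $T=(K^*)^{N+1}$ acting diagonally. If $\shE$ were not semistable, its maximal destabilizing subsheaf would be unique and hence $T$-invariant; for stability one argues analogously on a Jordan--H\"older factor of maximal slope. Thus it is enough to bound the slope of every $T$-invariant saturated proper subsheaf $\shF\subset\shE$ by some $\mu(\Syz(f_i:i\in J))$.

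The main obstacle is the third step: classifying $T$-invariant saturated subsheaves. Here I would use that, under the $\mathbb{Z}^{N+1}$-multigrading, the summands $\mathcal{O}_{\PP^N}(-d_i)$ carry pairwise distinct characters of $T$, and apply a weight-space argument to the stalk at the generic point. This forces any $T$-invariant coherent subsheaf of $\bigoplus_{i\in I}\mathcal{O}_{\PP^N}(-d_i)$ to be generically a coordinate subsum $\bigoplus_{i\in J}\mathcal{O}_{\PP^N}(-d_i)$ for some $J\subseteq I$, so that its saturation in the ambient sum is this coordinate subsum and its intersection with $\shE$ is $\Syz(f_i:i\in J)$. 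The delicate point — and the main source of technicality — is to show that passing to the saturation in $\shE$ picks up no additional first Chern class beyond the factor $h$ already accounted for in step one, so that the slope bound there applies verbatim. Combining this with the slope computation yields the desired (semi)stability.
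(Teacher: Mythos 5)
First, note that the paper does not prove this statement: it is quoted as Brenner's theorem with a pointer to \cite{brennerlookingstable}, so I am comparing your sketch with the known argument rather than with a proof in this text. Your first step is correct and is indeed how the relevant slopes arise: $\Syz(f_i : i \in J)$ has rank $|J|-1$ and degree $d_J-\sum_{i\in J}d_i$, and it is automatically saturated in $\shE$ because $\shE/\Syz(f_i:i\in J)$ embeds into the torsion-free sheaf $\bigoplus_{i\notin J}\mathcal{O}_{\PP^N}(-d_i)$; this also disposes of the ``delicate point'' you raise at the end, since no extra first Chern class can appear under saturation. The reduction to $T$-invariant subsheaves via uniqueness of the maximal destabilizing subsheaf is standard as well, although for the stability assertion you should invoke the socle (the maximal polystable subsheaf), not ``a Jordan--H\"older factor of maximal slope'': Jordan--H\"older factors are quotients and all have the same slope, so that phrase does not produce a canonical destabilizing subsheaf.

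The genuine gap is in your third step. It is false that a $T$-invariant coherent subsheaf of $\bigoplus_{i\in I}\mathcal{O}_{\PP^N}(-d_i)$ is generically a coordinate subsum: $\shE=\Syz(f_1,\ldots,f_n)$ itself is $T$-equivariant and is generically the hyperplane $\sum_i a_if_i=0$, not a coordinate subspace; likewise each $\Syz(f_i:i\in J)$ is an equivariant non-coordinate subsheaf. Worse, your conclusion is internally inconsistent: if a nonzero $\shF\subseteq\shE$ had ambient saturation $\bigoplus_{i\in J}\mathcal{O}_{\PP^N}(-d_i)$, it would have rank $|J|$ while being contained in $\shE\cap\bigoplus_{i\in J}\mathcal{O}_{\PP^N}(-d_i)=\Syz(f_i:i\in J)$, which has rank $|J|-1$. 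The weight-space argument fails because at the generic point the torus acts only semilinearly over $K(\PP^N)$; distinctness of the characters $\sigma_i$ decomposes fibres at $T$-fixed points, not generic stalks. What actually has to be proved is that every $T$-invariant (equivalently, multigraded) saturated subsheaf of $\shE$ of rank $s$ has degree at most $\max_{|J|=s+1}\bigl(d_J-\sum_{i\in J}d_i\bigr)$, and this combinatorial analysis of the multigraded syzygy module is precisely the nontrivial content of Brenner's proof; it is not a formal consequence of the torus action.
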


\begin{proof}
See \cite[Corollary 6.4]{brennerlookingstable}.
\end{proof}

Another important theorem, due to G. Bohnhorst and H. Spindler, is a numerical (semi)stability criterion for kernel bundles of rank $N$ on $\PP^N$ in characteristic $0$:

\begin{theorem}[Bohnhorst-Spindler]
\label{bohnhorstspindlerhdoneintro}
Let $\shE$ be a vector bundle of rank $N \geq 2$ on the projective space $\PP^N$ over an algebraically closed field $K$ of characteristic~$0$.
Suppose there is a short exact sequence
$$0 \lto \shE \lto \bigoplus_{i=1}^{N+k} \mathcal{O}_{\PP^N}(a_i) \lto \bigoplus_{j=1}^k \mathcal{O}_{\PP^N}(b_j) \lto 0,$$
such that $a_1 \geq \ldots \geq a_{N+k}$, $b_1 \geq \ldots \geq b_k$ and $b_j > a_{j}$ for $j=1 \komdots k$. Then
$\shE$ is semistable $($stable$)$ if and only if $a_{N+k} \geq (>) \mu(\shE) = \frac{1}{N}(\sum_{i=1}^{N+k} a_i - \sum_{j=1}^{k}b_j)$.
\end{theorem}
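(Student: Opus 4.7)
The proof naturally splits into the two implications, with sufficiency being the substantive direction.

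\emph{Necessity.} Assume $\shE$ is (semi)stable. I would produce a nonzero morphism $\shE \lto \mathcal{O}(a_{N+k})$, whose image is a twist of an ideal sheaf and hence a rank-$1$ torsion-free quotient of $\shE$ of slope at most $a_{N+k}$; (semi)stability then forces $\mu(\shE) \leq (<)\, a_{N+k}$. To construct the morphism, dualize the defining sequence (using that $\shE$ is locally free as the kernel of a surjection of vector bundles) to
$$0 \lto \bigoplus_{j=1}^{k}\mathcal{O}(-b_j) \lto \bigoplus_{i=1}^{N+k}\mathcal{O}(-a_i) \lto \shE^\vee \lto 0,$$
twist by $\mathcal{O}(a_{N+k})$ and take global sections. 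Since $H^1(\mathcal{O}(d)) = 0$ on $\PP^N$ for every $d$ (as $N \geq 2$), the resulting long exact sequence yields a surjection
$$\bigoplus_{i=1}^{N+k} H^0(\mathcal{O}(a_{N+k} - a_i)) \lto H^0(\shE^\vee(a_{N+k}))$$
with kernel $\bigoplus_{j=1}^{k} H^0(\mathcal{O}(a_{N+k} - b_j))$. The source contains the copy of $K$ coming from $i = N+k$, while the kernel vanishes because $b_j \geq b_k > a_k \geq a_{N+k}$ for each $j \leq k$. Hence $\mathrm{Hom}(\shE, \mathcal{O}(a_{N+k})) \neq 0$, as desired.

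\emph{Sufficiency.} Assume $a_{N+k} \geq (>)\, \mu(\shE)$. For any saturated subsheaf $\shF \subseteq \shE$ of rank $r$ with $1 \leq r < N$, one must show $\mu(\shF) \leq (<)\, \mu(\shE)$. My plan is to induct on $k$. The base case $k = 0$ reads $\shE = \bigoplus_{i=1}^N \mathcal{O}(a_i)$ with $a_N \geq \frac{1}{N}\sum a_i$, which together with $a_1 \geq \ldots \geq a_N$ forces $a_1 = \ldots = a_N$, giving a polystable bundle. For the inductive step, let $\varphi \colon \shE \lto \mathcal{O}(a_{N+k})$ be the morphism produced in the first paragraph and set $\shE' := \ker\varphi$. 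Then $\shE'$ has rank $N-1$ and admits a presentation by a smaller kernel sequence (obtained by deleting the summand $\mathcal{O}(a_{N+k})$ from the middle term and replacing the right-hand term by the cokernel of the associated components), to which the induction hypothesis should apply. One then distinguishes two cases: if $\shF \subseteq \shE'$, one applies induction directly; otherwise $\shF \cap \shE'$ has rank $r-1$ and $\shF/(\shF \cap \shE')$ embeds into $\mathcal{O}(a_{N+k})$, so one sums slope contributions against the induction hypothesis.

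The main obstacle is the bookkeeping in the inductive step: one must verify that after deletion of the last summand, the new defining sequence still satisfies the ordering and strict-inequality hypotheses of the theorem, and that $\mu(\shE')$ is related precisely enough to $\mu(\shE)$ for the two-case slope computation to produce the sharp bound. The assumption $\mathrm{char}(K) = 0$ classically enters via restriction theorems of Grauert-M\"ulich or Mehta-Ramanathan type, which reduce (semi)stability checks to behaviour on general curves and which may fail (or require substantial modification) in positive characteristic due to Frobenius-destabilizing phenomena.
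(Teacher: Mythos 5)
Your necessity argument is correct, and it is in substance the paper's own Lemma \ref{linebundlequotients}: the $H^1$-vanishing on $\PP^N$ gives the surjection onto $H^0(\shE^\vee(a_{N+k}))$, the chain $b_j\geq b_k>a_k\geq a_{N+k}$ kills the kernel term, and the resulting rank-one torsion-free quotient of slope at most $a_{N+k}$ forces $\mu(\shE)\leq(<)\,a_{N+k}$. (Note that the paper itself does not prove the sufficiency direction at all; it cites \cite[Theorem 2.7]{bohnhorstspindler} applied to $\shE^*$.)

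The sufficiency direction, however, has a genuine gap, and it is not merely bookkeeping. First, the induction is ill-posed: $\shE'=\ker\varphi$ has rank $N-1$ on $\PP^N$, so the statement you are inducting on (which concerns rank-$N$ bundles on $\PP^N$) does not apply to it; moreover the truncated map $\bigoplus_{i\leq N+k-1}\mathcal{O}_{\PP^N}(a_i)\to\bigoplus_{j}\mathcal{O}_{\PP^N}(b_j)$ need not be surjective, so $\shE'$ can fail to be locally free, and since a non-split kernel bundle on $\PP^N$ has rank at least $N$ (see Section \ref{syzygyandkernelbundles} and \cite[Corollary 1.7]{bohnhorstspindler}), $\shE'$ is typically a split or singular sheaf that is not semistable even when $\shE$ is. Second, and decisively, the two-case slope estimate does not close: in the case $\shF\not\subseteq\shE'$ the best you can extract is $r\mu(\shF)\leq (r-1)\mu(\shE')+a_{N+k}$, and with $\mu(\shE')=\frac{N\mu(\shE)-a_{N+k}}{N-1}$ this is $\leq r\mu(\shE)$ if and only if $a_{N+k}\leq\mu(\shE)$ --- the opposite of your hypothesis; already for $r=1$ the estimate degenerates to $\mu(\shF)\leq a_{N+k}$, which is vacuous. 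Concretely, for $\Omega_{\PP^2}=\Syz(X,Y,Z)$ one has $\mu(\shE)=-\tfrac{3}{2}$ and $a_3=-1$, so your estimate only bounds rank-one subsheaves by $-1$ and cannot detect stability; what is really needed is $H^0(\PP^2,\Omega_{\PP^2}(1))=0$, i.e., genuinely cohomological information about $\shE$ and its exterior powers, which is what Hoppe's criterion \ref{exteriorpowercriterion} and the Bohnhorst--Spindler resolution argument supply. Finally, characteristic $0$ enters that argument through the semistability of exterior powers of semistable bundles (cf.\ the discussion around Lemma \ref{semistabilitytensorposchar}), not through Grauert--M\"ulich or Mehta--Ramanathan type restriction theorems, which play no role in your own sketch.
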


\begin{proof}
This is \cite[Theorem 2.7]{bohnhorstspindler} applied to the dual bundle $\shE^*$.
\end{proof}

A general algorithm using Gr\"obner bases methods (computation of syzygy modules) that detects semistability of syzygy bundles and its imlementation by the first author was already announced by H. Brenner in \cite[Remark 5.3]{brennerlookingstable}. In this article we describe this algorithm more generally for kernel bundles and describe in detail how to implement it in a computer algebra system (this has been done concretely by the first author in CoCoA \cite{CocoaSystem}). This semistability algorithm can be used as a tool to examine further problems regarding semi(stability) of vector bundles by providing interesting examples. We explain these applications in more detail in the sequel. The paper is organized as follows. \medskip

In Section \ref{hoppesemistabilitycriterion} we recall a criterion due to H. J. Hoppe (see Lemma \ref{exteriorpowercriterion}) which relates (semi)stability to global sections of exterior powers of a given vector bundle. In particular, we show that this result, originally only formulated in characteristic $0$, holds in arbitrary characteristic. Hoppe's criterion is the key-result for our algorithm. \medskip

In Section \ref{syzygyandkernelbundles} we discuss some properties of kernel bundles and syzygy bundles on projective spaces. In particular, for these bundles we discuss necessary Bohnhorst/Spindler like numerical conditions (compare Theorem \ref{bohnhorstspindlerhdoneintro}) for semistability. \medskip 

The actual semistability algorithm for kernel bundles and its implementation is explained in Section \ref{algoimplementation}. Besides exterior powers, we also describe explicitly how to compute global sections of tensor products and symmetric powers of kernel bundles. These algorithms play an important role in our first application: the computation of Tannaka dual groups of polystable vector bundles $\shE$ of degree $0$ and rank $r$ on $\PP^N$ in characteristic $0$. \medskip 

Section \ref{tannaka} starts with a brief introduction to Tannaka duality. Roughly spoken, for a polystable vector bundle $\shE$ of degree $0$ one can find a semisimple algebraic group $G_{\shE}$ and an equivalence of categories between the abelian tensor category generated by $\shE$ and the category of finite-dimensional representations of $G_{\shE}$. The algebraic group $G_{\shE}$ is called the \emph{Tannaka dual group} of $\shE$. It was shown by the second author in \cite[Lemma 4.4 and Proposition 5.3]{monodromygroups} that for stable vector bundles of degree $0$ as in Theorem \ref{bohnhorstspindlerhdoneintro} the almost simple components of the Tannaka dual group are of type $A$. We explain how to compute the Tannaka dual group for an arbitrary stable kernel bundle of degree $0$ on $\PP^N$ and construct examples for low-rank syzygy bundles on $\PP^2$ having the symplectic group $\mathbf{Sp}_{r}$ as Tannaka dual group. \medskip

Furthermore, we are interested in the behaviour of the Tannaka dual group after restricting the bundles to smooth curves. Section \ref{sectionrestrictiontheorems} contains a short overview of restriction theorems for sheaves. 

In Section \ref{tannakaoncurves}, we describe a method to construct for certain kernel bundles $\shE$ on $\PP^N$ a finite morphism $f: \PP^N \to \PP^N$ such that the restriction of the pull-back $f^*(\shE)$ to certain complete intersection curves of sufficiently large degree has the same Tannaka dual group as the vector bundle $f^*(\shE)$ on $\PP^N$. We show that this works for example for the syzygy bundles constructed in Section 5. We would like to draw the reader's attention to the paper \cite{douhcompact} by V. Balaji. He shows the existence of a rank $2$ bundle $\shE$ with $c_2(\shE) \gg 0$ on a smooth surface $X$, such that the restriction to a curve of genus $>1$ has Tannaka dual group $\mathbf{SL}_2$, see also \cite[Proposition 3]{adddouhcompact}. His method is completely different from ours. He uses this result to show that the moduli space of stable principal $H$-bundles on $X$ with large characteristic classes is non-empty, where $H$ is any semisimple algebraic group (\cite[Chapter 7]{douhcompact}). \medskip 

In Section \ref{stabilityofgenericquadrics} we close an open case left in the paper \cite{costamiroroigmarques}, where L. Costa, R. M. Mir\'o-Roig and P. Macias Marques show the stability of the generic syzygy bundle on $\PP^2$ except for the bundle generated by five generic quadrics. We use the results obtained in Section 5 and construct an example for a stable syzygy bundle in this case, which gives the generic result via the openness of stability. \smallskip

In the final section we provide another application of the semistability algorithm concerning the computation of tight/solid closure of homogeneous ideals in the coordinate ring of a smooth projective curve. This is possible due to the geometric approach to this topic developed by H. Brenner.

\begin{acknowledgement}
We would like to thank Holger Brenner for many useful discussions. In particular the first author is grateful for the supervision of his PhD-thesis \cite{kaiddissertation} at the University of Sheffield where the semistability algorithm is part of one chapter. 
\end{acknowledgement}

\section{Theoretical backround -- Hoppe's semistability criterion} \label{hoppesemistabilitycriterion}

We recall that a  torsion-free sheaf $\shE$ on a smooth projective variety $X$ over an algebraically closed field $K$ is \emph{semistable} if for every coherent subsheaf $0 \neq \shF \subset \shE$ the inequality $\mu(\shF) :=\deg(\shF)/\rk(\shF) \leq \deg(\shE)/\rk(\shE) = \mu(E)$ holds. The sheaf $\shE$ is \emph{stable} if the inequality is always strict. The degree of a sheaf $\shF$ is defined using intersection theory and a fixed very ample invertible sheaf $\mathcal{O}_X(1)$ (which is also called a \emph{polarization} of $X$) as $\deg(F) = \deg(c_1(\shF). \mathcal{O}_X(1)^{\dim(X)-1})$. For every coherent torsion-free sheaf $\shE$ there exists a unique filtration $\shE_1 \subset \shE_2 \subset \ldots \shE_t = \shE$, called the \emph{Harder-Narasimhan filtration}, such that $\shE_i/\shE_{i-1}$ is semistable and $\mu(\shE_1) > \mu(\shE_2/\shE_1) > \ldots > \mu(\shE/\shE_{t-1})$. The slopes $\mu(\shE_1)$ and $\mu(\shE/\shE_{t-1}$) are also denoted by $\mu_{\max}(\shE)$ and $\mu_{\min}(\shE)$ respectively. If $K$ is not algebraically closed, then we define the terms degree, semistable, etc. via the algebraic closure of $K$.


%

If the characteristic of the base field $K$ is $0$, it is well-known that the tensor product $\shE \otimes \shF$ of two
semistable vector bundles $\shE$ and $\shF$ on a smooth projective polarized variety $(X,\mathcal{O}_X(1))$ is again semistable, and this also holds for exterior powers and symmetric powers (cf. \cite[Theorem 3.1.4 and Corollary 3.2.10]{huybrechtslehn}). This does not longer hold in characteristic $p>0$. This is due to the fact that the (absolute) \emph{Frobenius morphism} $F: X \ra X$ may destroy semistability, i.e., the Frobenius pull-back $F^*(\shE)$ of a semistable vector bundle $\shE$ is in general not semistable; see for instance the example of Serre in \cite[Example 3.2]{hartshorneamplecurve}. But for vector bundles on a projective space (in which we are mainly interested in) semistability behaves nicely with respect to tensor operations.

\begin{lemma}
\label{semistabilitytensorposchar}
Let $(X, \mathcal{O}_X(1))$ be a smooth projective polarized variety defined over an algebraically closed field $K$ of positive characteristic such that $\mu_{\max}(\Omega_X) \leq  0$. If $\shE$ and $\shF$ are semistable vector bundles, then $\shE \otimes \shF$, all exterior powers
$\bigwedge^q \shE$ and all symmetric powers $S^q \shE$ are semistable. In particular, this holds if $X$ is an abelian variety, a toric variety or a homogeneous space.
\end{lemma}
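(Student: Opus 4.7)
The plan is to reduce to the known characteristic-zero result by first showing that, under the hypothesis $\mu_{\max}(\Omega_X)\leq 0$, every semistable bundle on $X$ is actually \emph{strongly} semistable (i.e.\ remains semistable after every iterated Frobenius pull-back $F^{n*}$), and then invoking the tensor product theorem of Ramanan and Ramanathan.

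For the first step I would apply a bound of X.~Sun: for every torsion-free coherent sheaf $\shE$ of rank $r$ on $X$ one has
\[
\mu_{\max}(F^*\shE) - p\,\mu_{\max}(\shE) \leq (r-1)\max(\mu_{\max}(\Omega_X),0),
\]
together with the dual inequality $p\,\mu_{\min}(\shE) - \mu_{\min}(F^*\shE) \leq (r-1)\max(\mu_{\max}(\Omega_X),0)$. If $\mu_{\max}(\Omega_X)\leq 0$ and $\shE$ is semistable, both right-hand sides vanish and the two inequalities pinch $\mu_{\max}(F^*\shE)=\mu_{\min}(F^*\shE)=p\,\mu(\shE)$, so $F^*\shE$ is semistable. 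Iterating with $F^*\shE$ in place of $\shE$ shows that $(F^n)^*\shE$ is semistable for all $n\geq 0$; that is, $\shE$ is strongly semistable.

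Having established strong semistability, I would invoke the theorem of Ramanan and Ramanathan, which asserts that the tensor product of two strongly semistable vector bundles is strongly semistable. Applying this to $\shE$ and $\shF$ yields the semistability of $\shE\otimes\shF$, and by induction of $\shE^{\otimes q}$. To pass from $\shE^{\otimes q}$ to $\bigwedge^q\shE$ and $S^q\shE$, one uses that in arbitrary characteristic both are natural quotient sheaves of $\shE^{\otimes q}$, and that a short Chern class computation shows all three sheaves have the common slope $q\mu(\shE)$. Since any quotient of a semistable sheaf having the same slope is itself semistable -- a subsheaf of such a quotient with strictly larger slope would compose to a quotient of $\shE^{\otimes q}$ of strictly smaller slope, contradicting its semistability -- the semistability of $\bigwedge^q\shE$ and $S^q\shE$ follows.

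For the ``in particular'' statement one verifies $\mu_{\max}(\Omega_X)\leq 0$ case by case: on an abelian variety $\Omega_X$ is trivial; on a homogeneous space $X=G/P$ the tangent sheaf $T_X$ is generated by global sections coming from the Lie-algebra action, hence is a quotient of a (semistable, slope-$0$) trivial bundle, which forces $\mu_{\min}(T_X)\geq 0$ and hence $\mu_{\max}(\Omega_X)=-\mu_{\min}(T_X)\leq 0$; for a smooth projective toric variety the same conclusion follows from the Euler-type sequence presenting $\Omega_X$. The main obstacle in writing out the full proof is purely bibliographic: one must cite sufficiently strong versions of Sun's inequality and of the Ramanan--Ramanathan tensor theorem, after which the slope-chasing above is routine.
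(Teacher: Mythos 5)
Your proof is correct and takes essentially the same route as the paper: the paper likewise deduces that semistable implies strongly semistable from the hypothesis $\mu_{\max}(\Omega_X)\leq 0$ (citing Mehta--Ramanathan's theorem directly, where you invoke Sun's quantitative inequality to the same effect) and then applies the Ramanan--Ramanathan theorem, which it cites as covering $\shE\otimes\shF$, $\bigwedge^q\shE$ and $S^q\shE$ all at once, so your extra quotient-of-equal-slope argument is harmless but not needed. Your treatment of the supplement is just the dual of the paper's (global generation of $T_X$ versus embedding of $\Omega_X$ into a bundle of non-positive slopes).
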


\begin{proof}
Since $\mu_{\max}(\Omega_X) \leq  0$ a semistable vector bundle $\shE$ on $X$ is \emph{strongly semistable} by \cite[Theorem 2.1]{mehtaramanathanhomogeneous} (we recall that this means that the Frobenius pull-backs $\efpb(\shE)$ are semistable for all $e \geq 0$). Hence it follows from \cite[Theorem 3.23]{ramananramanathan} that $\shE \otimes \shF$, $\bigwedge^q \shE$ and $S^q \shE$ are also semistable.

The cotangent bundle of an abelian variety is trivial and the cotangent bundles of toric varieties and homogeneous spaces can be
embedded into a trivial bundle. So these varieties fulfill the condition $\mu_{\max}(\Omega_X) \leq  0$ which gives the supplement.
\end{proof}

The following result is well-known in characteristic $0$ (see for instance \cite[Proposition 2.1.5]{brennerlookingstable} or \cite[Proposition 1.1]{bohnhorstspindler}). It gives an algorithmic criterion to check semistability of a vector bundle on $\PP^N$ in terms of global sections of its exterior powers. It uses the trivial but useful fact (in particular for a computational approach to semistability) that a semistable vector bundle of
negative degree (or slope) does not have any nontrivial global sections. Since the key-idea goes already back to H. J. Hoppe (see \cite[Lemma 2.6]{hoppe}), this result is attributed to him. Lemma \ref{semistabilitytensorposchar} shows that Hoppe's result is also true in positive characteristic. We mention that the proof which we present below is essentially the same as the one given in \cite{brennerlookingstable}.

\begin{proposition}[Hoppe]
\label{exteriorpowercriterion}
Let $\shE$ be a vector bundle on $\PP^N$ over an algebraically closed field $K$.
Then the following holds.
\begin{enumerate}
\item The bundle $\shE$ is semistable if and only if for
every $q < \rk(\shE)$ and every $k < -q \mu(\shE)$ there does not
exist a non-trivial global section of $(\bigwedge^q \shE)(k)$.
\item If $\Gamma(\PP^N, (\bigwedge^q \shE)(k)) = 0$ for every  $q < \rk(\shE)$ and every $k \leq -q \mu(\shE)$, then $\shE$ is stable.
\end{enumerate}
\end{proposition}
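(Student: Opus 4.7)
My plan is to prove both halves of (1) by relating the slope condition to sections of $(\bigwedge^q \shE)(k)$, and then reduce (2) to a small modification of the argument for (1).

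For the \emph{only if} direction of (1), I would argue contrapositively: suppose there exists $q < \rk(\shE)$ and $k < -q\mu(\shE)$ with a non-trivial section $\mathcal{O}_{\PP^N} \hookrightarrow (\bigwedge^q \shE)(k)$. If $\shE$ were semistable, then by the standard tensorial results in characteristic $0$ (resp.\ by Lemma \ref{semistabilitytensorposchar} in positive characteristic, since $\mu_{\max}(\Omega_{\PP^N}) < 0$ — $\PP^N$ is a homogeneous space) the bundle $\bigwedge^q \shE$ is semistable of slope $q\mu(\shE)$, so $(\bigwedge^q \shE)(k)$ is semistable of slope $q\mu(\shE) + k < 0$. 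A semistable bundle of strictly negative slope admits no non-trivial global section (any section would yield a subsheaf $\mathcal{O}_{\PP^N}$ of slope $0$, violating semistability). This contradiction gives the claim.

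For the \emph{if} direction of (1), assume $\shE$ is not semistable and let $\shF \subset \shE$ be the maximal destabilizing subsheaf, of some rank $q$ with $1 \leq q < \rk(\shE)$ and $\mu(\shF) > \mu(\shE)$. Set $c := \deg(\shF)$, so that $c > q\mu(\shE)$ and $\det(\shF) \cong \mathcal{O}_{\PP^N}(c)$. The inclusion induces a map $\bigwedge^q \shF \to \bigwedge^q \shE$, which is injective because it is injective on the dense open subset where $\shF$ is a subbundle of $\shE$ and $\bigwedge^q \shF$ is torsion-free. Since $\bigwedge^q \shE$ is locally free hence reflexive, this map factors through the reflexive hull, yielding a non-zero (hence injective) map $\mathcal{O}_{\PP^N}(c) = (\bigwedge^q \shF)^{**} \to \bigwedge^q \shE$. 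Twisting by $\mathcal{O}_{\PP^N}(-c)$ produces a non-trivial global section of $(\bigwedge^q \shE)(-c)$ with $-c < -q\mu(\shE)$, contradicting the hypothesis.

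For part (2), the strengthened hypothesis (vanishing already at $k = -q\mu(\shE)$) together with (1) forces $\shE$ to be at least semistable. If $\shE$ were semistable but not stable, there would exist a saturated subsheaf $\shF \subset \shE$ of rank $q < \rk(\shE)$ with $\mu(\shF) = \mu(\shE)$, and the construction of the previous paragraph yields a non-trivial section of $(\bigwedge^q \shE)(-c)$ for $c = q\mu(\shE)$, i.e., exactly at the boundary $k = -q\mu(\shE)$; this contradicts the assumption.

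The main technical point that requires care is the construction of the injection $\mathcal{O}_{\PP^N}(c) \hookrightarrow \bigwedge^q \shE$ from a possibly non-locally-free saturated subsheaf $\shF$. One must invoke the fact that $\bigwedge^q \shF$ is torsion-free of rank one and that its double dual coincides with $\det(\shF)$, a line bundle on $\PP^N$ necessarily of the form $\mathcal{O}_{\PP^N}(c)$ with $c = \deg\shF$; the rest is formal from the reflexivity of $\bigwedge^q \shE$. A secondary but crucial point is the appeal to Lemma \ref{semistabilitytensorposchar} to ensure that the argument carries over to positive characteristic, which is precisely the generalization claimed in this section.
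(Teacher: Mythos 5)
Your proposal is correct and follows essentially the same route as the paper: one direction via semistability of $\bigwedge^q\shE$ (using Lemma \ref{semistabilitytensorposchar} to cover positive characteristic) and negativity of the slope $q\mu(\shE)+k$, the other via the map $(\bigwedge^q\shF)^{**}\cong\mathcal{O}_{\PP^N}(\deg\shF)\to\bigwedge^q\shE$ produced from a (de)stabilizing subsheaf. The only difference is that you phrase the second direction contrapositively through the maximal destabilizing subsheaf, whereas the paper bounds the slope of an arbitrary subsheaf directly; this is cosmetic.
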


\begin{proof}
We prove $(1)$. If $\shE$ is semistable, then all exterior powers
$\bigwedge^q \shE$ are also semistable by Lemma
\ref{semistabilitytensorposchar}. We have $\mu(\bigwedge^q \shE)=q \mu(\shE)$, which can be easily verified using the splitting principle (see \cite[Section I.1.2]{okonekschneiderspindler}). We have
$\Gamma(\PP^N, (\bigwedge^q \shE)(k)) = 0$ because
\begin{eqnarray*}
\mu((\bigwedge^q \shE)(k)) &=& \mu((\bigwedge^q \shE) \otimes
\mathcal{O}_{\PP^N}(k))\\&=& \mu(\bigwedge^q \shE) + \mu(\mathcal{O}_{\PP^N}(k))\\
&=& q \mu(\shE) + k\\ &<& 0
\end{eqnarray*}
and $\shE$ is semistable.

For the proof of the other direction, we do not need Lemma \ref{semistabilitytensorposchar}. Assume that for
every $q < \rk(\shE)$ and every $k < -q \mu(\shE)$ there does not
exist a non-trivial global section of $(\bigwedge^q \shE)(k)$. Let $\shF \subset \shE$ be a coherent subsheaf of rank $q < \rk(\shE)$. Then we also have an inclusion $\bigwedge^q \shF \subset \bigwedge^q \shE$. The bidual $(\bigwedge^q
\shF)^{**}$ is isomorphic to $\mathcal{O}_{\PP^N}(m)$ with $m = \deg(\shF)$. Hence, we have
$\bigwedge^q \shF \cong \mathcal{O}_{\PP^N}(m)$ outside a closed subset of
codimension $\geq 2$ (see \cite[Lemma 1.1.10]{okonekschneiderspindler}). Because $\shE$ is locally free,
there is a non-trivial sheaf morphism $\mathcal{O}_{\PP^N}(m) \rightarrow
\bigwedge^q \shE$, i.e., $\Gamma(\PP^N,(\bigwedge^q \shE)(-m)) \neq
0$. By assumption we have $-m \geq -q \mu(\shE)$ and therefore
$\mu(\shF)= m/q \leq \mu(\shE)$. Hence the vector bundle $\shE$ is semistable.

Part $(2)$ follows in the same way if we replace $<$ by $\leq$ appropriately.
\end{proof}

\begin{remark}
We recall that the concepts of semistability and stability coincide if $\deg(\shE)$ and $\rk(\shE)$ are coprime. As mentioned in \cite{bohnhorstspindler} the converse of the stability statement in Lemma \ref{exteriorpowercriterion} does not hold in general. The easiest examples are the so-called \emph{nullcorrelation bundles} on projective spaces $\PP^N$ for $N$ odd. These bundles are given by a short exact sequence
$$0 \lto \shN \lto \shT_{\PP^N}(-1) \lto \mathcal{O}_{\PP^N}(1) \lto 0,$$
where $\shT_{\PP^N}$ denotes the tangent bundle. In particular, we have $\rk(\shN)=N-1$ and $\deg(\shN)=0$. Moreover, the nullcorrelation bundles are stable and have the property that if $N \geq 5$, then $\Gamma(\PP^N, \bigwedge^2 \shN) \neq 0$ (see [ibid., Remark below Example 1.2]). So these bundles do not fulfill the exterior power condition from Lemma \ref{exteriorpowercriterion}.

Nevertheless, it is easy to see that a rank-$2$ vector bundle $\shE$ on $\PP^N$ ($N$ arbitrary) is stable if and only if $\Gamma(\PP^N,\shE(k))=0$ for $k \leq \mu(\shE)=\frac{\deg(\shE)}{2}$. Since $\Gamma(\PP^3, \shN)=0$ (see \cite[Proof of Theorem II.1.3.1(i)]{okonekschneiderspindler}), it is clear that a nullcorrelation bundle $\shN$ on $\PP^3$ is stable and fulfills the exterior power condition.
\end{remark}

\begin{remark}
\label{applicationtoothervarieties}
Let $(X,\mathcal{O}_X(1))$ be a polarized smooth projective variety of dimension $d \geq 1$ defined over an algebraically closed field of characteristic $0$.
For a semistable vector bundle $\shE$ on $X$ the numerical condition on the exterior powers $\bigwedge^q \shE$ in Lemma \ref{exteriorpowercriterion} is still fulfilled, if we replace the degree bound for the global sections by $k < -q \mu(\shE)/\deg(\mathcal{O}_X(1))$ for every $1 \leq q < \rk(\shE)$. Hence, the numerical condition is, up to the factor $1/\deg(\mathcal{O}_X(1))$, always necessary for semistability.

If additionally $\Pic(X) = \ZZ$, then the numerical criterion is again equivalent to the semistability of $\shE$. Important examples of varieties with this property are general surfaces of degree $\geq 4$ in $\PP^3_\CC$ (Noether's Theorem) and (in arbitrary characteristic) complete intersections of dimension $\geq 3$ in $\PP^N$ (see \cite[Corollary IV.3.2 and IV.4(i)]{hartshorneamplesubvarieties}).

In positive characteristic, (under the assumption $\Pic(X) = \ZZ$) the numerical condition on the exterior powers still implies semistability, but the equivalence in Lemma \ref{exteriorpowercriterion} only holds if every semistable vector bundle on $X$ is strongly semistable (see Lemma \ref{semistabilitytensorposchar}). Thus it is clear that Lemma \ref{exteriorpowercriterion} does not provide an algorithmic tool to detect semistability of vector bundles on curves. For algorithmic methods to determine semistability and strong semistability of vector bundles over an algebraic curve in positive characteristic see \cite[Chapter 3]{kaiddissertation}.
\end{remark}

\begin{example}
\label{calabiyauexample}
Let $F \in \ZZ[X_0 \komdots X_N]$, $N \geq 4$, be a homogeneous polynomial of degree $d$ such that the hypersurface $X:=\Proj(\QQ[X_0 \komdots X_N]/(F))$ is smooth. By Remark \ref{applicationtoothervarieties} we have $\Pic(X) \cong \ZZ$ and thus Hoppe's criterion \ref{exteriorpowercriterion} is applicable to determine semistability of vector bundles on $X$. Now we assume that $d= N+1$. Then the canonical bundle $\omega_X \cong \mathcal{O}_X$ is trivial which implies the semistability of the cotangent bundle $\Omega_X$ (see \cite[Theorem 3.1]{peternellsubsheaves}). In particular, $X$ is a Calabi-Yau variety. We consider $X$ as the generic fiber $\shX_0$ of the generically smooth projective morphism
$$\shX:= \Proj(\ZZ[X_0 \komdots X_N]/(F)) \lto \Spec \ZZ$$
of relative dimension $N-2$. Up to finitely many exceptions, the special fiber $\shX_p$ over a prime number $p$ is a smooth projective variety over the finite field $\FF_p$ with $\Pic(\shX_p)= \ZZ$. By the openness of semistability, the cotangent bundle $\Omega_{\shX_p}$ of the special fiber $\shX_p$ is semistable too for almost all prime numbers $p$. Since $\deg(\Omega_{\shX_p})=0$, every semistable vector bundle is strongly semistable on $\shX_p$ by \cite[Theorem 2.1]{mehtaramanathanhomogeneous}. Thus, for $p \gg 0$ we can also use Lemma \ref{exteriorpowercriterion} to detect semistability of vector bundles on $\shX_p$ (in positive characteristic).
\end{example}

%
%
%

\section{Syzygy bundles and kernel bundles} \label{syzygyandkernelbundles}

In the sequel of this article, we restrict ourselves to vector bundles on $\PP^N$, $N \geq 2$, which are kernels of surjective morphisms between splitting bundles, i.e., bundles sitting inside a short exact sequence of the form
$$0 \lto \shE \lto \bigoplus_{i=1}^n \mathcal{O}_{\PP^N}(a_i) \stackrel{\varphi}\lto \bigoplus_{j=1}^m\mathcal{O}_{\PP^N}(b_j) \lto 0,$$
where $n \geq m$. The morphism $\varphi$ is given by an $m \times n$ matrix $\shM = (a_{ji})$, where the entries $a_{ji} \in R:=K[X_0 \komdots X_N]$ are homogeneous polynomials of degrees $b_j - a_i$. In this paper, we call such a vector bundle a \emph{kernel bundle}. Special instances of kernel bundles are \emph{syzygy bundles} which correspond to the case $m=1$ and $b_1=0$, i.e., a syzygy bundle $\Syzfn$ is given by a short exact sequence
$$0 \lto \Syzfn \lto \bigoplus_{i=1}^n \mathcal{O}_{\PP^N}(-d_i) \stackrel{f_1 \komdots f_n}\lto \mathcal{O}_{\PP^N} \lto 0,$$
where $f_1 \komdots f_n \in R=K[X_0 \komdots X_N]$ are homogeneous polynomials of degrees $d_i$, $i=1 \komdots n$. If one of the polynomials is constant, the syzygy bundle $\Syzfn$ is obviously split. To exclude this case, one often demands that the
ideal $(f_1 \komdots f_n)$ is $R_+$-primary, i.e., $\sqrt{(f_1 \komdots f_n)}=R_+ = (X_0 \komdots X_N)$. The most prominent example of a syzygy bundle is the \emph{cotangent bundle} $\Omega_{\PP^N} \cong \Syz(X_0 \komdots X_N)$ of $\PP^N$.

We can compute the topological invariants of a kernel bundle $\shE$ from the presenting short exact sequence above. We have $$\rk(\shE)= n-m \mbox{ and } \deg(\shE)= c_1(\shE)=\sum_{i=1}^n a_i - \sum_{j=1}^m b_j,$$
and thus $$\mu(\shE)= \frac{1}{n-m} \left(\sum_{i=1}^n a_i - \sum_{j=1}^m b_j \right).$$
Since the Chern polynomial is multiplicative on short exact sequences, it is also easy to compute higher Chern classes of kernel bundles (see also Section \ref{sectionrestrictiontheorems}).

If $\shE$ does not split as a direct sum of line bundles, then the dual bundle $\shE^*$ of a kernel bundle has homological dimension one, and therefore we obtain the inequality $\rk(\shE) \geq N$ (see \cite[Corollary 1.7]{bohnhorstspindler}).

\begin{example}
Every vector bundle $\shE$ on the projective plane $\PP^2$ which does not split as a direct sum of line bundles has homological dimension $1$. This is easy to see, since there exists a surjective sheaf morphism $\shF \ra \shE$ for some splitting bundle $\shF = \bigoplus_{i=1}^n \mathcal{O}_{\PP^2}(a_i)$, which is also surjective on global sections (for this standard argument see for instance \cite[Lemma 1.5]{bohnhorstspindler}). Thus, there exists a short exact sequence
$$0 \lto \shK \lto \shF \lto \shE \lto 0,$$
where the first cohomology $H^1(\PP^2,\shK)$ of the kernel $\shK$ vanishes. But by the theorem of Horrocks (see \cite[Theorem I.2.3.1]{okonekschneiderspindler}) this means that $\shK$ splits as a direct sum of line bundles. Since $\shE$ is semistable if and only if $\shE^*$
is semistable, we can dualize the short exact sequence and apply Algorithm \ref{algorithmpn} to the kernel bundle $\shE^*$. Hence, our semistability algorithm is applicable to every (non-split) vector bundle on $\PP^2$ and to vector bundles of homological dimension $1$ on $\PP^N$ in general.
\end{example}

In the sequel, we show that the twists $a_1 \komdots a_n$ and $b_1 \komdots b_m$, which occur in the presenting sequence of a kernel bundle, have to fulfill a certain numerical condition which is necessary for semistability (stability). We remark that this condition
is also necessary for the semistability (stability) of kernel bundles (and in particular of syzygy bundles) on arbitrary smooth projective varieties.

If $\shE$ is a vector bundle on $\PP^N$ of rank $r$ and $\shF \subset \shE$ a subsheaf of rank $r-1$, then the quotient $\shE/\shF$ is outside codimension $2$ isomorphic to $\mathcal{O}_{\PP^N}(\ell)$ for some $\ell \in \ZZ$. This is equivalent to a section $\mathcal{O}_{\PP^N} \ra \shE^*(-\ell)$. For kernel bundles we are able to control such sections by an easy numerical condition. In particular, we can replace the condition on the global sections of the $(r-1)$th exterior power in Hoppe's criterion \ref{exteriorpowercriterion} by this condition. Before we state the result, we recall that a resolution
$$\mathfrak{F}_\bullet: 0 \lto \shF_{d} \lto \ldots \lto \shF_1 \lto \shF_0 \lto \shE \lto 0$$
of a vector bundle $\shE$ on $\PP^N$ with splitting bundles $\shF_i$, $0 \leq i \leq d$, is \emph{minimal} if the global evaluation

\begin{eqnarray*}
0 &\lto& \Gamma(\PP^N,\shF_{d}(m)) \lto \ldots \lto \Gamma(\PP^N, \shF_1(m))\\ &\lto& \Gamma(\PP^N, \shF_0(m)) \lto \Gamma(\PP^N,\shE(m)) \lto 0
\end{eqnarray*}
is exact too for every $m \in \ZZ$ and no line bundle can be omitted for two consecutive splitting bundles $\shF_i$ and $\shF_{i-1}$, which is equivalent to say that there are no constant entries ($\neq 0$) in the matrices representing the differentials in the resolution (see \cite[Section 7.2]{ottavianinotes}).

\begin{lemma}
\label{linebundlequotients}
Let $\shE$ be a vector bundle on $\PP^N$, $N \geq 2$, sitting in a short exact sequence
$$0 \lto \shE \lto \bigoplus_{i=1}^n \mathcal{O}_{\PP^N}(a_i) \lto \bigoplus_{j=1}^m \mathcal{O}_{\PP^N}(b_j) \lto 0,$$
where $a_1 \geq a_2 \geq \ldots \geq a_n$. If $a_n \geq (>) \mu(\shE) =  \frac{1}{n-m}(\sum_{i=1}^{n} a_i-\sum_{j=1}^m b_j)$, then there are no mappings from $\shE$ to line bundles which contradict the semistability $($stability$)$ of $\shE$. Moreover, if the dualized sequence is a minimal resolution for $\shE^*$, then this numerical condition is necessary for semistability $($stability$)$.
\end{lemma}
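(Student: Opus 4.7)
My plan is to translate the statement into one about nonzero morphisms $\shE \lto \mathcal{O}_{\PP^N}(\ell)$. A rank-$(r-1)$ saturated subsheaf $\shF \subset \shE$ (with $r=\rk(\shE)$) has torsion-free rank-one quotient $\shE/\shF$, whose reflexive hull is some $\mathcal{O}_{\PP^N}(\ell)$; since the cokernel of $\shE/\shF \hookrightarrow \mathcal{O}_{\PP^N}(\ell)$ is supported in codimension $\geq 2$, one has $\deg(\shE/\shF)=\ell$, and the non-destabilization inequality $\mu(\shF) \leq \mu(\shE)$ is equivalent to $\ell \geq \mu(\shE)$. It thus suffices to control the integers $\ell$ for which a nonzero morphism $\shE \to \mathcal{O}_{\PP^N}(\ell)$ exists.

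For the sufficient direction I would dualize the presenting sequence, twist by $\ell$, and pass to global sections. Since $N \geq 2$ we have $H^1(\PP^N, \mathcal{O}_{\PP^N}(\ell - b_j)) = 0$ for every $j$, so the induced map
$$H^0\Bigl(\bigoplus_{i=1}^n \mathcal{O}_{\PP^N}(\ell - a_i)\Bigr) \lto H^0(\shE^*(\ell))$$
is surjective. A nonzero morphism $\shE \to \mathcal{O}_{\PP^N}(\ell)$ is a nonzero element of $H^0(\shE^*(\ell))$ and therefore lifts to a nonzero section of $\bigoplus_i \mathcal{O}_{\PP^N}(\ell - a_i)$; this forces $\ell \geq a_i$ for some $i$, hence $\ell \geq a_n$ as $a_n$ is the minimum. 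Together with $a_n \geq \mu(\shE)$ (respectively $>$), one concludes $\ell \geq \mu(\shE)$ (respectively $>$), establishing the sufficient direction.

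For the necessary direction under the minimality hypothesis I would produce a destabilizing morphism whenever $a_n < \mu(\shE)$ (respectively $a_n \leq \mu(\shE)$ in the stable case). Compose the inclusion $\shE \hookrightarrow \bigoplus_i \mathcal{O}_{\PP^N}(a_i)$ with the projection onto the last summand $\mathcal{O}_{\PP^N}(a_n)$; if this composition vanished then dually the inclusion $\mathcal{O}_{\PP^N}(-a_n) \hookrightarrow \bigoplus_i \mathcal{O}_{\PP^N}(-a_i)$ of the $n$-th summand would factor through the differential $M \colon \bigoplus_j \mathcal{O}_{\PP^N}(-b_j) \to \bigoplus_i \mathcal{O}_{\PP^N}(-a_i)$ via some $\psi = (\psi_j)_j$. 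Comparing $n$-th components gives $\sum_j M_{n,j}\, \psi_j = 1$, where $M_{n,j}$ has degree $b_j - a_n$ and $\psi_j$ has degree $a_n - b_j$; a nonzero summand requires $b_j = a_n$, making $M_{n,j}$ a nonzero constant, which minimality of the resolution forbids. Hence the composition is nonzero, and its saturated kernel $\shF$ yields a rank-$(r-1)$ quotient with $(\shE/\shF)^{**} \cong \mathcal{O}_{\PP^N}(\ell')$ for some $\ell' \leq a_n$. Semistability of $\shE$ then forces $\ell' \geq \mu(\shE)$, and combined with $\ell' \leq a_n$ gives $a_n \geq \mu(\shE)$; analogously, stability forces $\ell' > \mu(\shE)$ and hence $a_n > \mu(\shE)$.

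The step I expect to require the most care is the minimality argument, that is, showing that the composition $\shE \hookrightarrow \bigoplus_i \mathcal{O}_{\PP^N}(a_i) \to \mathcal{O}_{\PP^N}(a_n)$ does not vanish. This relies essentially on the absence of constant entries in the differential and is precisely where the minimal-resolution hypothesis enters. The remaining ingredients are the standard vanishing $H^1(\PP^N, \mathcal{O}_{\PP^N}(k)) = 0$ for $N \geq 2$ and the dictionary between rank-$(r-1)$ subsheaves and line bundle quotients.
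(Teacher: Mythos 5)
Your proof is correct and follows essentially the same route as the paper: the sufficiency direction is the identical $H^1$-vanishing/lifting argument applied to the dualized, twisted presenting sequence, forcing any nonzero map $\shE \to \mathcal{O}_{\PP^N}(\ell)$ to have $\ell \geq a_n \geq \mu(\shE)$. For the necessity direction the paper merely asserts that minimality makes the projection $\shE \to \mathcal{O}_{\PP^N}(a_n)$ nonzero; your factorization argument (no term $M_{n,j}\psi_j$ can sum to a nonzero constant without a constant entry in the differential) is a correct fleshing-out of that one-line claim.
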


\begin{proof}
We twist the short exact sequence presenting $\shE$ with $\mathcal{O}_{\PP^N}(\ell)$ and look at the dual sequence
$$0 \lto \bigoplus_{j=1}^m \mathcal{O}_{\PP^N}(-\ell-b_j) \lto \bigoplus_{i=1}^n \mathcal{O}_{\PP^N}(-\ell-a_i) \lto (\shE(\ell))^* \cong \shE^*(-\ell) \lto 0.$$
Since $H^1(\PP^N,\bigoplus_{j=1}^m \mathcal{O}_{\PP^N}(-\ell-b_j))=0$ for all $\ell \in \ZZ$, every global section of
$(\shE(\ell))^* $ comes from $\Gamma(\PP^N, \bigoplus_{i=1}^n \mathcal{O}_{\PP^N}(-\ell-a_i))$. Consequently, for $\ell > -a_n$ there exists no non-trivial morphism $\shE(\ell) \ra \mathcal{O}_{\PP^N}$. By assumption, we have $-a_n \leq \mu(\shE^*)$.
So we have $\Gamma(\PP^N,\shE^*(-\ell))=0$ for $\ell > \mu(\shE^*)$. Hence there are no mappings to line bundles which contradict the semistability. Analogously, one obtains the corresponding statement for stability.

Next, we prove the supplement. Assume that $a_n  < (\leq)\, \mu(\shE)$. It follows from the assumption on the resolution of $\shE^*$ that the mapping $\shE \ra \mathcal{O}_{\PP^N}(a_n)$ is non-zero. But such a morphism does not exist for $\shE$ semistable or stable.
\end{proof}

\begin{remark}
\label{rankthreecase}
If $\shE$ is a rank-$3$ bundle (not necessarily a kernel bundle), then $\shE$ is semistable if and only if $\Gamma(\PP^N,\shE(\ell))=0$ for $\ell < -\mu(\shE)$ and $\Gamma(\PP^N,\shE^*(k))=0$ for $k<-\mu(\shE^*)$ (see \cite[Remark II.1.2.6]{okonekschneiderspindler}). So for kernel bundles of rank $3$ on $\PP^2$ (for $\PP^3$ see Theorem \ref{bohnhorstspindlerhdoneintro}) we only have to check global sections and the numerical condition of Proposition \ref{linebundlequotients}.
\end{remark}

\begin{remark}
It is easy to check that the subsheaf $$\ker(\bigoplus_{i \neq n} \mathcal{O}_{\PP^N}(a_i) \ra \bigoplus_{j=1}^m \mathcal{O}_{\PP^N}(b_i))$$ destabilizes the kernel bundle $\shE$ if  $a_n < \mu(\shE)$ and the resolution of $\shE^*$ is minimal.
\end{remark}

\begin{remark}
\label{minimalitybospi}
In \cite[Proposition 2.3]{bohnhorstspindler}, G. Bohnhorst and H. Spindler show that for a kernel bundle $\shE$ of rank $N$ on $\PP^N$,
the corresponding resolution of $\shE^*$ is minimal if and only if $a_1 \geq \ldots \geq a_{N+k}$, $b_1 \geq \ldots \geq b_k$ and $b_j > a_{j}$ for $j=1 \komdots k$ holds. Hence, in characteristic $0$, Theorem \ref{bohnhorstspindlerhdoneintro} shows that for kernel bundles of this type the numerical condition of Lemma \ref{linebundlequotients} is even sufficient for semistability (stability). For syzygy bundles, we give an easy characteristic free proof of Theorem \ref{bohnhorstspindlerhdoneintro}.
\end{remark}



\begin{proposition}
\label{parametersyzygybundle}
Let $K$ be a field and let $f_1 \komdots f_{N+1} \in R=K[X_0 \komdots X_N]$ be homogeneous parameters of degrees
$1 \leq d_1 \leq \ldots \leq d_{N+1}$. If $d_1 + \ldots + d_N \geq (N-1) d_{N+1}$, then the syzygy bundle
$\Syz(f_1 \komdots f_{N+1})$ is semistable on $\PP^N$. If the inequality is strict, then $\Syz(f_1 \komdots f_{N+1})$
is a stable bundle.
\end{proposition}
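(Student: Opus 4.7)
The plan is to verify the sufficient condition in Hoppe's criterion (Proposition~\ref{exteriorpowercriterion}); I only need the direction ``vanishing of the relevant $H^0$'s implies semistability'', so the argument goes through in arbitrary characteristic and does not depend on Lemma~\ref{semistabilitytensorposchar}. Setting $\shE := \Syz(f_1 \komdots f_{N+1})$, so that $\rk(\shE) = N$ and $\mu(\shE) = -\tfrac{1}{N}\sum_{i=1}^{N+1} d_i$, the task is to show that
$$H^0\bigl(\PP^N, \bigwedge^{q}\shE(k)\bigr) = 0$$
for every $1 \leq q \leq N-1$ and every integer $k < \tfrac{q}{N}\sum_{i=1}^{N+1} d_i$, and the analogous statement with $\leq$ for the stability case.

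The central algebraic tool is the Koszul complex. Since $f_1 \komdots f_{N+1}$ are homogeneous parameters in the Cohen--Macaulay ring $R = K[X_0 \komdots X_N]$ they form a regular sequence, and because $\sqrt{(f_1 \komdots f_{N+1})} = R_+$ the associated Koszul complex
$$0 \lto \bigwedge^{N+1}\! F \lto \cdots \lto \bigwedge^{q+1}\!F \lto \bigwedge^{q}\!F \lto \cdots \lto F \lto \mathcal{O}_{\PP^N} \lto 0$$
of sheaves on $\PP^N$ is exact, where $F := \bigoplus_{i=1}^{N+1} \mathcal{O}_{\PP^N}(-d_i)$. Next I would verify that the Koszul differential $d : \bigwedge^{q}F \to \bigwedge^{q-1}F$ vanishes on $\bigwedge^q \shE$: on a decomposable $v_1 \wedge \cdots \wedge v_q$ with $v_j \in \shE = \ker(F \to \mathcal{O}_{\PP^N})$ every term of the Koszul formula contains a factor $(F \to \mathcal{O})(v_j) = 0$. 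A rank count identifies $\bigwedge^q \shE$ with $\ker d$ (both are torsion-free of rank $\binom{N}{q}$ inside $\bigwedge^q F$, and any torsion in the quotient $\ker d / \bigwedge^q \shE$ embeds into the locally free sheaf $\bigwedge^q F / \bigwedge^q \shE \cong \bigwedge^{q-1}\shE$), so by Koszul exactness $\bigwedge^q \shE = \ker d = \mathrm{im}(\bigwedge^{q+1} F \to \bigwedge^q F)$.

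Passing to global sections, I would use that $H^i(\bigwedge^q F(k)) = 0$ for $0 < i < N$ to conclude that the degree-$k$ piece of the algebraic Koszul complex is exact at all positions $\geq 1$, which yields
$$H^0\bigl(\PP^N, \bigwedge^{q}\shE(k)\bigr) \;=\; \mathrm{im}\bigl((K_{q+1})_k \lto (K_q)_k\bigr), \qquad (K_p)_k := \bigoplus_{|J|=p} R_{k - d_J}.$$
Since $(K_{q+1})_k = 0$ as soon as $k < \min_{|J|=q+1} d_J = d_1 + \cdots + d_{q+1}$, I obtain the clean vanishing $H^0(\bigwedge^q \shE(k)) = 0$ for every $k < d_1 + \cdots + d_{q+1}$.

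Finally, a short numerical check remains: the hypothesis $(N-1)d_{N+1} \leq d_1 + \cdots + d_N$ must imply $\tfrac{q}{N}\sum_{i=1}^{N+1} d_i \leq d_1 + \cdots + d_{q+1}$ for every $1 \leq q \leq N-1$. Combining the hypothesis with $d_i \leq d_{N+1}$ for $i = q+2, \ldots, N$ gives
$$d_1 + \cdots + d_{q+1} \;\geq\; (N-1)d_{N+1} - (N-q-1)d_{N+1} \;=\; q\,d_{N+1},$$
whereas $\sum_{i=q+2}^{N+1} d_i \leq (N-q) d_{N+1}$. Multiplying the first inequality by $N-q$ and the second by $q$ produces $(N-q)(d_1 + \cdots + d_{q+1}) \geq q(N-q)d_{N+1} \geq q \sum_{i=q+2}^{N+1} d_i$, which rearranges to the required bound; a strict hypothesis propagates to a strict conclusion, hence stability. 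The main obstacle I foresee is securing the sharper vanishing $H^0(\bigwedge^q \shE(k)) = 0$ for $k < d_1 + \cdots + d_{q+1}$: the naive bound coming from the inclusion $H^0(\bigwedge^q \shE(k)) \hookrightarrow H^0(\bigwedge^q F(k))$ only delivers vanishing for $k < d_1 + \cdots + d_q$, which falls short of Hoppe's threshold, so the use of Koszul exactness is essential.
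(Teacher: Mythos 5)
Your proposal is correct and follows essentially the same route as the paper's proof: both identify $\bigwedge^q\shE$ with the image of the Koszul differential $\shF_{q+1}\to\shF_q$, use global exactness of the Koszul complex to lift sections of $(\bigwedge^q\shE)(k)$ to $\shF_{q+1}(k)$ (whose sections vanish for $k<d_1+\cdots+d_{q+1}$), and then reduce to the same numerical inequality $(N-q)(d_1+\cdots+d_{q+1})\geq q(d_{q+2}+\cdots+d_{N+1})$, which the paper cites from Brenner and you verify directly. The observation that only the "vanishing implies (semi)stability" direction of Hoppe's criterion is needed, so the argument is characteristic-free, matches Remark~\ref{minimalitybospi}'s framing of this as an easy characteristic-free proof.
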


\begin{proof}
We use Lemma \ref{exteriorpowercriterion} to check the semistability of $\Syz(f_1 \komdots f_{N+1})$.
For a subset $I=\{i_1 \komdots i_k\} \subseteq \{1 \komdots N+1 \}$ we use the notation $d_I=\sum_{j=1}^k d_{i_j}$.
We consider the Koszul complex
$$0 \lto \shF_{N+1} \lto \shF_N \lto \ldots \lto \shF_2 \lto \shF_1 \lto \mathcal{O}_{\PP^N} \lto 0$$
on $\PP^N$ associated to the parameters $f_1 \komdots f_{N+1}$, where $$\shF_k:=\bigoplus_{|I|=k} \mathcal{O}_{\PP^N}(-d_I)$$ for $k=1 \komdots N+1$.
We have to show for every $i < N$ and every $$m < -i \mu(\Syz(f_1 \komdots f_{N+1}))= \frac{i \sum_{j=1}^{N+1}d_j}{N}$$ that $\Gamma(\PP^N, (\bigwedge^i \Syz(f_1 \komdots f_{N+1}))(m))=0$. For every $1 \leq i < N$ we have a surjection $$\shF_{i+1} \lto \bigwedge^i \Syz(f_1 \komdots f_n) \lto 0.$$ Since the Koszul complex of a regular sequence is also globally exact, every global section of $(\bigwedge^i \Syz(f_1 \komdots f_n))(m)$ comes from the bundle $\shF_{i+1}(m)$. We have $\Gamma(\PP^N,\shF_{i+1}(m))=0$ for $m < d_1+ d_2 + \ldots + d_{i+1}$. The assumption $\sum_{i=1}^N d_i \geq (N-1) d_{N+1}$ implies
$$(N-i)(d_1 + \ldots + d_{i+1}) \geq i(d_{i+2}+ \ldots + d_{N+1}),$$
for $1 \leq i \leq N-1$ (for this easy computation see \cite[Corollary 2.4]{brennerlookingstable}). But this is equivalent to $N(d_1 + \ldots + d_{i+1}) \geq i(d_1 + \ldots + d_{N+1})$, $1 \leq i \leq N-1$, and we obtain the assertion. The supplement follows analogously.
\end{proof}

\section{The semistability algorithm and its implementation} \label{algoimplementation}

The aim of this section is to use Hoppe's semistability criterion \ref{exteriorpowercriterion} to obtain a semistability algorithm for kernel bundles, which can be implemented in a computer algebra system. All implementations described in this section can be found on 

\smallskip 
http://www2.math.uni-paderborn.de/people/ralf-kasprowitz/cocoa.html. 
\medskip

The major advantage of kernel bundles compared to arbitrary vector bundles is that we can compute the global sections $\Gamma(\PP^N,\bigwedge^q \shE)$ of their exterior powers in a way which is suitable for a computer algebra system. We do not claim that such a computational approach is impossible for other vector bundles, but at least it requires more technical effort. For a kernel bundle $\shE$ we give (probably well-known) presentations of the tensor operations (tensor powers, exterior powers and symmetric powers) as kernels of mappings between splitting bundles (but these mappings are in general not surjective). This enables us to compute the global sections of these vector bundles described by applying the left exact functor $\Gamma(\PP^N, -)$. For our semistability algorithm we require such a presentation only for the exterior powers, but we will need the other tensor operations in Section \ref{tannaka}. 

\begin{proposition}
\label{exteriorpowerpresentations}
Let $\shE$ be a vector bundle on $\PP^N$, which sits in a short exact sequence
$$0 \lto \shE \lto \bigoplus_{i=1}^n \mathcal{O}_{\PP^N}(a_i) \stackrel{\shM =(a_{ji})} \lto \bigoplus_{j=1}^m\mathcal{O}_{\PP^N}(b_j) \lto 0.$$
Set $I:=\{1 \komdots n\}$, $J:=\{1 \komdots m\}$ and $Q:=\{1 \komdots q\}$ for a fixed $q \in \mathbb{N}$. \medskip

The $q$th tensor product of $\shE$  sits in the exact sequence \begin{equation} \label{formel1} 0 \lto \shE^{\otimes q} \to \bigoplus_{\alpha \in I^q}\mathcal{O}_{\PP^N}(\sum_{p \in Q} a_{\alpha_p}) \stackrel{\varphi_q} \lto \!\!\!\!\!\!\!\bigoplus_{(\beta,j,p)  \atop \beta \in I^{q-1}, j \in J, p \in Q} \!\!\!\!\!\!\! \mathcal{O}_{\PP^N}(\sum_{p \in Q-\{q\}} a_{\beta_p} + b_j),\end{equation}
where the map $\varphi_q$ is given by $$e_{\alpha} \longmapsto \sum_{(j,p)\atop  j\in J,p \in Q}a_{j,\alpha_p}e_{(\alpha^{(p)},j,p)}.$$ Here $\alpha^{(p)}$ means the $(q-1)$-tuple $\alpha$ without the $p$th element.\medskip

The $q$th exterior power of $\shE$, $1 \leq q < n-m$, sits in the exact sequence
\begin{equation} \label{formel2} 0 \lto \bigwedge^q \shE \lto \bigoplus_{A \subseteq I, |A|=q} \mathcal{O}_{\PP^N}(\sum_{i \in A} a_i) \stackrel{\varphi_q} \lto \!\!\!\!\!\!\! \bigoplus_{(B,j) \atop B \subseteq I, |B|=q-1, j \in J}   \!\!\!\!\!\!\!\!\! \mathcal{O}_{\PP^N}((\sum_{i \in B} a_i) + b_j),\end{equation}
where the map $\varphi_q$ is given by $$e_A \longmapsto \sum_{(i,j) \atop \,i \in A,\,j \in J}\sign(i,A)\, a_{ji} \,e_{(A-\{i\},j)}$$
$($the subset $A \subset I$ is supposed to have the induced order and
$$\sign(i,A) = \begin{cases} -1, \mbox{ if } i \mbox{ is an even element in } A,\\ 1, \mbox{ if } i \mbox{ is an odd element in } A). \end{cases}$$ 

Let $\Char(K) \nmid q$. The $q$th symmetric power of $\shE$ sits in the exact sequence \begin{equation} \label{formel3} 0 \lto S^q(E) \lto \!\!\!\bigoplus_{i_1 \leq \dots \leq i_q \atop  i_k \in I}\!\!\!\mathcal{O}_{\PP^N}(\sum_{k \in Q}a_{i_k}) \stackrel{\varphi_q} \lto \!\!\!\!\!\! \bigoplus_{i'_1\leq \dots \leq i'_{q-1},j\atop  i'_k \in I, j \in J}\!\!\!\!\!\!\mathcal{O}_{\PP^N}(\sum_{k \in Q-\{q\}}a_{i'_k}+b_j),\end{equation} where the map $\varphi_q$ is given by 
$$e_{i_1 \leq\dots \leq i_q} \longmapsto \sum_{i \in \{i_1,\dots, i_q\} \atop j \in J} a_{ji}e_{i_1 \leq\dots \leq \hat{i}\leq \dots \leq i_q,j}.$$Here $\hat{i}$ means that this element is omitted.


\end{proposition}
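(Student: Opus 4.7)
The plan is to prove all three parts by a common strategy: realize each tensor operation on $\shE$ as the kernel of a canonically constructed morphism between splitting bundles built from the defining map $\varphi$, and then verify the kernel description locally using a splitting of the short exact sequence $0 \to \shE \to F \to G \to 0$, where $F := \bigoplus_i \mathcal{O}_{\PP^N}(a_i)$ and $G := \bigoplus_j \mathcal{O}_{\PP^N}(b_j)$. Since the asserted isomorphisms are between coherent sheaves, it suffices to check them Zariski-locally on $\PP^N$. On any open $U$ where $F|_U$ splits as $\shE|_U \oplus G'|_U$ with $G'|_U \xrightarrow{\sim} G|_U$ (such $U$ cover $\PP^N$ because $\shE$ is locally free), each tensor operation $T(F)|_U$ decomposes into summands indexed by the bidegree in $\shE|_U$ and $G'|_U$, and the constructed $\varphi_q|_U$ can then be analyzed block by block.

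For the tensor power sequence (\ref{formel1}), I would define $\varphi_q$ as the sum of the $q$ morphisms $\psi_p := \mathrm{id}_F^{\otimes(p-1)} \otimes \varphi \otimes \mathrm{id}_F^{\otimes(q-p)} \colon F^{\otimes q} \to F^{\otimes(p-1)} \otimes G \otimes F^{\otimes(q-p)}$ for $p = 1, \ldots, q$, with the targets identified with the indicated direct summand in (\ref{formel1}) via the basis $e_{(\beta,j,p)}$. Local freeness (hence flatness) of $F$ ensures $\ker\psi_p = F^{\otimes(p-1)} \otimes \shE \otimes F^{\otimes(q-p)}$. Since the $q$ targets lie in independent direct summands of the codomain of $\varphi_q$, one has $\ker\varphi_q = \bigcap_{p=1}^q \ker\psi_p$, and on the local splitting this intersection equals $\shE^{\otimes q}|_U$. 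The explicit formula on the basis $e_\alpha$ then follows by expanding $\varphi(e_{\alpha_p}) = \sum_j a_{j,\alpha_p} e_j$ and collecting terms.

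For the exterior power sequence (\ref{formel2}), $\varphi_q$ is the canonical derivation $\bigwedge^q F \to \bigwedge^{q-1} F \otimes G$ extending $\varphi$, which on the basis element $e_A$ (with $A \subset I$, $|A| = q$) reads $e_A \mapsto \sum_{i \in A} \sign(i,A)\, e_{A \setminus \{i\}} \otimes \varphi(e_i)$; substituting $\varphi(e_i) = \sum_j a_{ji} e_j$ produces the stated formula. Vanishing on $\bigwedge^q \shE$ is immediate from $\varphi|_{\shE} = 0$. On the local splitting one has $\bigwedge^q F|_U \cong \bigoplus_{p+p'=q} \bigwedge^p \shE|_U \otimes \bigwedge^{p'} G'|_U$, and a direct block computation using the isomorphism $\varphi|_{G'} \colon G' \xrightarrow{\sim} G$ shows that $\varphi_q|_U$ restricted to each summand with $p' \geq 1$ is injective, giving $\ker\varphi_q|_U = \bigwedge^q \shE|_U$. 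The signs encoded in $\sign(i,A)$ are precisely those of the standard derivation under the fixed lexicographic order on $I$.

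The symmetric power case (\ref{formel3}) proceeds by the same pattern, with $\varphi_q$ the derivation $S^q F \to S^{q-1} F \otimes G$ and the local decomposition $S^q F|_U \cong \bigoplus_{p+p'=q} S^p \shE|_U \otimes S^{p'} G'|_U$. The main technical obstacle here, and the reason for the hypothesis $\Char(K) \nmid q$, is that expressing the derivation in the symmetric-monomial basis introduces multinomial factors: on the summand $p' = q$ of pure $G'$-tensors the differential acquires an overall factor of $q$, and more generally on summands with $p' \geq 1$ it picks up coefficients divisible by integers bounded by $q$. Invertibility of $q$ in $K$ is thus needed to keep these restrictions injective, so that the kernel is exactly the $p = q$ summand $S^q \shE|_U$; without this condition, the kernel can strictly exceed $S^q\shE$. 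Once the injectivity on each off-diagonal summand is verified locally, globalization in all three cases is automatic since the morphisms and the identifications were constructed in a coordinate-independent way.
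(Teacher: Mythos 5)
Your treatment of the tensor powers and of the exterior powers is correct, and it takes a genuinely different route from the paper: for (\ref{formel1}) the paper argues by induction on $q$ via the K\"unneth formula applied to the tensor product of the two-term complexes $C_q \otimes C_1$, and for (\ref{formel2}) it realizes $\varphi_q$ as a differential in the Koszul-type complex $0 \to \bigwedge^q \shE \to \bigwedge^q \shF \to \bigwedge^{q-1}\shF \otimes \shG \to \dots \to S^q \shG \to 0$ attached to the symmetric algebra of $\shG$, whereas you verify exactness directly on a local splitting $F|_U \cong \shE|_U \oplus G'|_U$ by a block analysis. Your approach is more elementary and isolates exactly the two spots where exactness is needed; the injectivity of the blocks $\bigwedge^p\shE \otimes \bigwedge^{p'}G' \to \bigwedge^p\shE \otimes \bigwedge^{p'-1}G' \otimes G$ for $p' \geq 1$ does hold in every characteristic (each basis vector maps to a sum of distinct target basis vectors with coefficients $\pm 1$, and these supports are disjoint for distinct sources), so that part is sound.

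The symmetric-power part has a genuine gap, in two respects. First, the map you use is not the map in the statement: the derivation $S^qF \to S^{q-1}F \otimes G$ sends $e_{i_1}\cdots e_{i_q}$ to $\sum_{k=1}^q e_{i_1}\cdots \hat e_{i_k}\cdots e_{i_q}\otimes \varphi(e_{i_k})$, so a repeated index contributes with its multiplicity, while the stated $\varphi_q$ sums over the underlying \emph{set} $\{i_1,\dots,i_q\}$ and has matrix entries exactly $a_{ji}$. For $n=2$, $m=1$, $q=2$ the derivation has columns $(2f_1,0)$, $(f_2,f_1)$, $(0,2f_2)$, whereas the stated map has $(f_1,0)$, $(f_2,f_1)$, $(0,f_2)$; these differ by a diagonal matrix of multinomial coefficients, which is invertible only when all those coefficients are units. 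Second, and more seriously, the hypothesis $\Char(K)\nmid q$ does not make your off-diagonal blocks injective: the restriction of the derivation to $S^p\shE\otimes S^{p'}G'$ is $\mathrm{id}\otimes d'$ with $d'\colon S^{p'}G'\to S^{p'-1}G'\otimes G$, and already for $m=1$ one has $d'(g^{p'})=p'\,g^{p'-1}\otimes\varphi(g)$, so you need \emph{every} $p'$ with $1\le p'\le q$ to be invertible, i.e.\ $\Char(K)=0$ or $\Char(K)>q$. For $\Char(K)=2$ and $q=3$ the hypothesis $\Char(K)\nmid q$ holds, yet the summand $\shE\otimes S^2G'$ lies in the kernel of your map, so your sequence is not exact; your phrase ``coefficients divisible by integers bounded by $q$, invertibility of $q$ is thus needed'' conflates invertibility of $q$ with invertibility of all integers up to $q$. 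The paper avoids this entirely by dualizing first: the presentation $\shG^*\otimes S^{q-1}\shF^*\to S^q\shF^*\to S^q(\shE^*)\to 0$ is exact in every characteristic because it only uses surjectivity of multiplication maps, its dual is precisely the stated $\varphi_q$ (a dual of a multiplication map carries no multiplicity coefficients), and the hypothesis $\Char(K)\nmid q$ enters only through the identification $S^q(\shE^*)^*\cong S^q(\shE)$. To repair your argument you should likewise run the local block computation on the dual sequence and then dualize.
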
  

\begin{proof}
We start with formula (\ref{formel1}). It is obviously true for $q = 1$, and assume that we proved it for some $q \in \mathbb{N}$. We consider the locally free sheaves $$C^0_q := \bigoplus_{\alpha \in I^q}\mathcal{O}_{\PP^N}(\sum_{p \in Q} a_{\alpha_p}), \;C^1_q := \!\!\!\!\! \bigoplus_{(\beta,j,p) \atop \beta \in I^{q-1}, j \in J, p \in Q}\!\!\!\!\! \mathcal{O}_{\PP^N}(\sum_{p \in Q-\{q\}} a_{\beta_p} + b_j)$$ and obtain a complex $C_q$: $\dots \to 0 \to C^0_q \stackrel{\varphi_q} \to C^1_q \to 0 \to \dots$. The complex $C_q \otimes C_1$ is by definition the complex $$\dots \to 0 \to C^0_q \otimes C^0_1 \stackrel{\varphi_q \otimes \mbox{id} \oplus \mbox{id} \otimes \varphi_1} \to C^1_q \otimes C^0_1 \oplus C^0_q \otimes C^1_1 \to C^1_q \otimes C^1_1 \to 0 \to \dots$$ see for example \cite[page 63ff]{CartanEilenberg}. Since the kernels and images of all morphisms occuring in the complex $C_1$ are locally free, we may apply the formula of K\"unneth (\cite[Theorem VI.3.1]{CartanEilenberg}) and get a short exact sequence 
\begin{eqnarray*}
0 &\lto& \bigoplus_{r+s=k}H_r(C_q) \otimes H_s(C_1) \lto H_k(C_q \otimes C_1)\\ &\lto& \bigoplus_{r+s = k-1} \shT\!\mbox{or}_1(H_r(C_q), H_s(C_1)) \lto 0,
\end{eqnarray*}
where the Tor sheaves on the right are trivial due to the fact that the sheaves $H_s(C_1)$ are locally free for all $s$. Observe that the kernel of the morphism $\varphi_q$ is $\shE^{\otimes q}$, hence we get with the K\"unneth formula $$\mbox{ker}(\varphi_q \otimes \mbox{id} \oplus \mbox{id} \otimes \varphi_1) = H_0(C_q \otimes C_1) \cong H_0(C_q) \otimes H_0(C_1) = \shE^{\otimes q} \otimes \shE. $$ Furthermore, it is not difficult to obtain the identity $\varphi_{q+1} = \varphi_q \otimes \mbox{id} \oplus \mbox{id} \otimes \varphi_1$ under the obvious identifications $C_q^0 \otimes C_1^0 \cong C_{q+1}^0$ and $C_q^1 \otimes C_1^0 \oplus C^0_q \otimes C^1_1 \cong C^1_{q+1}$.   

For formula (\ref{formel2}), let $0 \ra \shE \ra \shF \stackrel{\varphi} \ra \shG \ra 0$ be a short exact sequence of locally free sheaves. Consider the Koszul complex of the $\mathcal{O}$-algebra $S\shG$, locally defined by the sequence of elements in $S\shG$ consisting of a basis of $\shF$, i.e., the complex $$0 \lto \bigwedge^s\shF \otimes S\shG \stackrel{d_s} \lto \bigwedge^{s-1}\shF \otimes S\shG \stackrel{d_{s-1}}\lto \dots \lto \shF \otimes S\shG \stackrel{d_1}\lto S\shG \lto 0,$$ with $s = \rk(\shF)$ and $$d_j(f_{i_1}\wedge\dots\wedge f_{i_j} \otimes g) = \sum_{k=1}^j(-1)^{k-1} (f_{i_1} \wedge \ldots \wedge \hat{f}_{i_k} \wedge \ldots \wedge f_{i_j}) \otimes f_{i_k}\cdot g$$ ($\hat{f}_{i_k}$ means that we omit the $k$-th factor). One easily checks that for every $1 \leq q < \rk(\shE)$ this yields an exact complex $$0 \lto \bigwedge^q  \shE \lto \bigwedge^q \shF
\stackrel{\varphi_q} \lto  \bigwedge^{q-1} \shF  \otimes \shG \lto \dots \lto \shF \otimes S^{q-1}\shG \lto S^q\shG \lto 0,$$ with $\varphi_q := d_q$. Applying this argument for the short exact sequence defining the kernel bundle $\shE$ together with the isomorphism $ \bigwedge^r\bigoplus_{i \in I}\mathcal{O}_{\PP^N}(a_i) \cong \bigoplus_{A \subseteq I, |A|=q} \mathcal{O}_{\PP^N}(\sum_{i \in A} a_i)$ yields the claim. 

The case (\ref{formel3}) works analogously to the second one, but now we take the Koszul complex of $S\shF$ locally with respect to the regular sequence consisting of a basis of $\shE$. This gives the exact complex $$0 \ra \bigwedge^r\shE \otimes S^{q-r}\shF \lto \bigwedge^{r-1}\shE \otimes S^{q-2}\shF \lto \dots \lto \shE \otimes S\shF \lto S\shF \lto S\shG \ra 0.$$ Applying this to the dual exact sequence that defines the kernel bundle $\shE$ and dualizing again yields the exact complex $$0 \lto S^q(\shE^*)^* \lto S^q(\bigoplus_{i \in I}\mathcal{O}_{\PP^N}(a_i)) \stackrel{d_1^*} \lto \bigoplus_{j \in J}\mathcal{O}_{\PP^N}(b_j) \otimes S^{q-1}(\bigoplus_{i \in I}\mathcal{O}_{\PP^N}(a_i))$$ Furthermore, there are isomorphisms $$S^q(\bigoplus_{i \in I}\mathcal{O}_{\PP^N}(a_i)) \cong \bigoplus_{i_1\leq\dots \leq i_q \atop i_k \in I}\mathcal{O}_{\PP^N}(\sum_{k \in Q}a_{i_k})$$ and  $S^q(\shE^*) \cong S^q(\shE)^*$ if the characteristic of the field $K$ does not divide $q$, see \cite[Satz 86.12]{schejastorch2}.
\end{proof}

Now, we describe the implementation of the semistability algorithm for kernel bundles.

\begin{remark}
\label{globalsectionconecomputation}
Let $\shE$ be a vector bundle on $\PP^N$. The twisted dual bundle $\shE^*(n)$ is generated by global sections for $n \gg 0$ (see \cite[Theorem 5.17]{hartshornealgebraic}), i.e.,
we have a surjection $\mathcal{O}_{\PP^N}(-n)^s \ra \shE^* \ra 0$ for some $s$. Let $\shK$ be the kernel of this morphism. Taking duals yields a short exact sequence
$$0 \lto \shE \lto \mathcal{O}_{\PP^N}(n)^s \lto \shK^* \lto 0$$
of vector bundles. As in the proof of Proposition \ref{exteriorpowerpresentations}, we derive for every $1 \leq q \leq \rk(\shE)-1$ an  exact sequence
$$0 \lto \bigwedge^q \shE \lto \mathcal{O}_{\PP^N}(qn)^{{s \choose q}} \stackrel{\varphi_q}\lto \mathcal{O}_{\PP^N}((q-1)n)^{{s \choose {q-1}}} \otimes \shK^*.$$
Since we also have a surjection $\mathcal{O}_{\PP^N}(-m)^t \ra \shK \ra 0$ for $m \gg 0$ and some $t$, we obtain a diagram

\smallskip

\begin{xy}
\xymatrix{
&  &  &  & 0 \ar[d] \\
&0 \ar[r] & \bigwedge^q \shE \ar[r] & \mathcal{O}_{\PP^N}(qn)^{{s \choose q}} \ar[dr]^{\bar{\varphi}_q} \ar[r]^{\hspace{-0.8cm}\varphi_q} & \mathcal{O}_{\PP^N}((q-1)n)^{{s \choose {q-1}}} \otimes \shK^* \ar[d]\\
&  & & & \mathcal{O}_{\PP^N}((q-1)n)^{{s \choose {q-1}}} \otimes \mathcal{O}_{\PP^N}(m)^t, }
\end{xy}

\bigskip

\noindent where the maps $\varphi_q$ and $\bar{\varphi}_q$ have the same kernel. In this sense, the algorithmic methods to determine semistability which we develop in this chapter, are applicable to every vector bundle on $\PP^N$.
\end{remark}

\begin{remark}
\label{cokernelpresentation}
Let $M=\bigoplus_{d \in \ZZ} M_d$ be a finitely generated graded $R$-module ($R=K[X_0\komdots X_N]$). If we fix homogeneous generators $g_1 \komdots g_n$, we obtain a surjection
$$R^n \stackrel{f} \lto M,~ e_j \longmapsto g_j,~ j=1 \komdots n.$$
From this map we can derive, for every positive integer $q\geq 1$, the well-known exact sequence
$$ (\ker f) \otimes_R \bigwedge^{q-1} R^n \stackrel{\alpha} \lto \bigwedge^q R^n \lto \bigwedge^q M \lto 0,$$
where the map $\alpha$ is given by
$$x \otimes (y_1 \wedge \ldots \wedge y_{q-1}) \longmapsto x \wedge y_1 \wedge \ldots \wedge y_{q-1}$$
(see \cite[\S 83, Aufgabe 26]{schejastorch2}). Fixing homogeneous generators of $\ker f$ gives a diagram

\smallskip

\begin{xy}
\xymatrix{
& 0 &  &  & \\
& {(\ker f) \otimes_R \bigwedge^{q-1} R^n} \ar[r]^{\hspace{0.8cm}\alpha} \ar[u] & {\bigwedge^q R^n} \ar[r] & \bigwedge^q M \ar[r] & 0  \\
& R^m \otimes_R  \bigwedge^{q-1} R^n. \ar[u] \ar[ur]^{\bar{\alpha}} & & &  }
\end{xy}

\bigskip

\noindent In this way we obtain a presentation of $\bigwedge^q M$ as a cokernel of a map between free modules. Since all these mappings are graded, we have a corresponding sequence
$$\bigoplus_{i=1}^{\tilde{n}} \mathcal{O}_{\PP^N}(a_i) \stackrel{\bar{\alpha}}\lto \bigoplus_{j=1}^{\tilde m} \mathcal{O}_{\PP^N}(b_j) \lto \bigwedge^q \widetilde{M} \lto 0,$$
of the associated coherent sheaves on $\PP^N$ (with suitable twists). But the map
$\bigoplus_{j=1}^{\tilde{m}} R_{b_j} \ra \Gamma(\PP^N,\bigwedge^q \widetilde{M})$ is in general not surjective. Hence this sequence cannot be the basis of an algorithmic approach. If the depth of $M$ is at least $2$, then this map is surjective if and only if  $\bigwedge^q(\Gamma(\PP^N,\widetilde{M})) \ra \Gamma(\PP^N, \bigwedge^q \widetilde{M})$ is surjective (see \cite[Theorem A4.1 and Theorem A4.3]{eisenbud}). An illustrating example is the following.
\end{remark}

\begin{example}
\label{monomialexample}
We consider the syzygy bundle $\shS:=\Syz(X^3,Y^3,Z^3,XY^2Z^2)$ on $\PP^2= \Proj K[X,Y,Z]$ (see also \cite[Example 7.4]{brennerlookingstable}) and use Brenner's criterion \ref{monomialcase}. The slope of this bundle equals $-\frac{14}{3} \approx -4,66$. For the subsheaves of rank $1$ coming from two monomials we have (we only list the combinations having a common factor)
\begin{eqnarray*}
\mu(\Syz(X^3,XY^2Z^2))&=&1-8=-7,\\ \mu(\Syz(Y^3,XY^2Z^2))&=&2-8=-6 \mbox{ and}\\ \mu(\Syz(Z^3,XY^2Z^2))&=&2-8=-6.
\end{eqnarray*}
Hence we see that the global sections of $\shS$ do not contradict the semistability. But the monomial subfamily $X^3,Y^3,Z^3$ yields the rank-$2$ subbundle $\Syz(X^3,Y^3,Z^3) \subset \shS$ of slope $-\frac{9}{2}=-4,5$.
Thus, $\shS$ is not semistable with Harder-Narasimhan filtration
$$0 \lto \Syz(X^3,Y^3,Z^3) \lto \shS \lto \mathcal{O}_{\PP^2}(-5) \lto 0.$$
Moreover, the mapping $\bigwedge^2(\Gamma(\PP^2,\shS)) \ra \Gamma(\PP^2,\bigwedge^2 \shS)$ is not surjective.
\end{example}

Since we assume that a vector bundle and its exterior powers are given as kernels of morphisms between splitting bundles, we have to know how to compute the kernel of an $R$-linear map $R^n \ra R^m $ between finitely generated free modules over the polynomial ring $R$. The answer is given by the following well-known lemma which shows that we can compute a minimal nontrivial global section with Gr\"obner bases.

\begin{lemma}
\label{kernelcomputation}
Let $R=K[X_0 \komdots X_N]$ be the polynomial ring over a field $K$ and let $\varphi: R^m \ra R^n$ be an $R$-linear map. Denote by $e_1 \komdots e_m$ the standard basis vectors of $R^m$. With the notation $w_j = \varphi(e_j)$, $j=1 \komdots m$, we have $$\ker \varphi =\Syz_R(w_1 \komdots w_m).$$ In other words, the kernel of $\varphi$ is the $(R$-$)$syzygy module of the columns of the matrix which describes $\varphi$.
\end{lemma}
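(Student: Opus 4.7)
The plan is to unravel both sides as subsets of $R^m$ and observe that the defining conditions coincide by $R$-linearity of $\varphi$; the argument is essentially definitional and should fit in a couple of lines, so the emphasis in my write-up will be on clarity rather than on any technical obstacle.

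First I would fix notation. Let $e_1, \ldots, e_m$ denote the standard basis of $R^m$, set $w_j := \varphi(e_j) \in R^n$, and recall that by definition the syzygy module $\Syz_R(w_1, \ldots, w_m)$ consists of all tuples $(r_1, \ldots, r_m) \in R^m$ satisfying $\sum_{j=1}^m r_j w_j = 0$ in $R^n$. An arbitrary element of $R^m$ can be written uniquely as $v = \sum_{j=1}^m r_j e_j$ with $r_j \in R$.

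Next I would compute $\varphi(v)$ using $R$-linearity:
\[
\varphi(v) \;=\; \varphi\!\left( \sum_{j=1}^m r_j e_j \right) \;=\; \sum_{j=1}^m r_j \, \varphi(e_j) \;=\; \sum_{j=1}^m r_j \, w_j.
\]
Hence the condition $v \in \ker \varphi$ is equivalent to $\sum_{j=1}^m r_j w_j = 0$, which is exactly the condition $(r_1, \ldots, r_m) \in \Syz_R(w_1, \ldots, w_m)$. Both inclusions follow from this single identity, so $\ker \varphi = \Syz_R(w_1, \ldots, w_m)$.

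Since the $w_j$ are precisely the columns of the matrix representing $\varphi$ with respect to the standard bases, this says that $\ker\varphi$ is the module of syzygies on those columns, which is what is needed to invoke Gr\"obner basis methods for the algorithmic step. There is no real obstacle here; the only thing worth noting is that the identification is purely formal and requires no hypotheses on $R$ beyond being a (commutative) ring, so in particular it applies verbatim to $R = K[X_0, \ldots, X_N]$ where Gr\"obner basis algorithms for syzygies are available.
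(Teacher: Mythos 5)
Your proof is correct; the identity $\varphi(\sum_j r_j e_j) = \sum_j r_j w_j$ is indeed all that is needed, and the observation that no hypotheses on $R$ beyond commutativity are used is accurate. The paper itself gives no argument but simply cites \cite[Proposition 3.3.1(a)]{kreuzerrobbiano}, and your write-up is the standard definitional verification that the cited result encapsulates, so there is nothing to add.
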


\begin{proof}
See \cite[Proposition 3.3.1(a)]{kreuzerrobbiano}.
\end{proof}

So we have accumulated the necessary technical tools to formulate an algorithm, based on Hoppe's criterion \ref{exteriorpowercriterion}, to determine semistability of a given kernel bundle on $\PP^N$. Note that every instruction can be performed with any computer algebra system which is able to handle Gr\"obner bases calculations. In our case, we implemented it in CoCoA \cite{CocoaSystem}.

\begin{algorithm}[Semistability of kernel bundles] \label{semistability}
\label{algorithmpn}
\mbox{}

\smallskip

\noindent \underline{Input:} Two lists $[a_1 \komdots a_n]$, $[b_1 \komdots b_m]$ and a homogeneous $m \times n$ matrix $\shM=(a_{ji})$ with no constant polynomial entries $a_{ji}\neq 0$ of degrees $b_j- a_i$ defining a kernel bundle
$$0 \lto \shE = \widetilde{\ker \shM} \lto \bigoplus_{i=1}^n \mathcal{O}_{\PP^N}(a_i) \stackrel{\shM}\lto \bigoplus_{j=1}^m\mathcal{O}_{\PP^N}(b_j) \lto 0$$
with $a_1 \geq a_2 \geq \ldots \geq a_n$ $($as usual $\widetilde{\ker \shM}$ denotes the sheaf associated to the graded $R$-module $\ker \shM)$.
\bigskip

\noindent \underline{Output:} The decision whether $\shE$ is semistable in terms of a boolean value TRUE or FALSE respectively.

\begin{enumerate}
\item Compute the invariants $\rk(\shE)=n-m$, $\deg(\shE)= \sum_{i=1}^n a_i - \sum_{j=1}^m b_j$ and $\mu(\shE)= \frac{1}{n-m} \left(\sum_{i=1}^n a_i - \sum_{j=1}^m b_j \right)$.
\item If the slope condition $a_n \geq \mu(\shE)$ of Proposition \ref{linebundlequotients} is fulfilled, then continue. Else return FALSE and terminate.
\item Set $q:=1$.
\item Construct the matrix $\shM_q$ which describes the map $\varphi_q$ in Proposition \ref{exteriorpowerpresentations}.
\item Compute the syzygy module $S_q$ of the columns of $\shM_q$.
\item Compute the initial degree $\alpha_q:=\min \{t: (S_q)_t \neq 0 \}$ of the graded $R$-module $S_q$ $($i.e., $\alpha_q$ is the minimal twist $\ell$ such that $\Gamma(\PP^N, (\bigwedge^q \shE)(\ell)) \neq 0)$.
\item If $\alpha_q < -q\mu(\shE)$, then return FALSE and terminate. Else set $q:=q+1$ and continue.
\item If $q< \rk(\shE)-1$, then go back to step $(4)$.
Else return TRUE and terminate.
\end{enumerate}
\end{algorithm}

\begin{remark}
We require the lists $[a_1 \komdots a_n]$ and $[b_1 \komdots b_n]$ as input data, since they define the degree of every entry $a_{ji}$ in the matrix $\shM$. This becomes clear if the matrix $\shM$ contains zeros (which is often the case if $m \geq 2$). An illustrating example is $\Syz(f_1,f_2,f_3,0)$ on $\PP^2$ for $R_+$-primary elements $f_1,f_2,f_3$. Since $$\Syz(f_1,f_2,f_3,0) \cong \Syz(f_1,f_2,f_3) \oplus \mathcal{O}_{\PP^2}(-d),$$
the semistability heavily depends on the degree $d$ which we have allocated to $0$.
\end{remark}

\begin{remark}
One easily verifies that the map given by the matrix $\shM$ is surjective if and only if the ideal generated by all $m \times m$ minors of $\shM$ is $R_+$-primary. The latter property can be checked computationally.
\end{remark}

We give a first example of an application of our semistability algorithm.

\begin{example}
Let $K$ be an arbitrary field. We consider the monomials $X^2,Y^2,XY,XZ,YZ \in R = K[X,Y,Z]$ and the corresponding sheaf of syzygies $\shS:=\Syz(X^2,Y^2,XY,XZ,YZ)$. Is $\shS$ a semistable sheaf? Since the ideal generated by these monomials is not $R_+$-primary, we can neither apply Theorem \ref{monomialcase} nor (at first sight) Algorithm \ref{algorithmpn}. We compute a resolution of $\shS$ (for instance with CoCoA), namely,
$$0 \lto \mathcal{O}_{\PP^2}(-4)^2 \stackrel{\shA}\lto  \mathcal{O}_{\PP^2}(-3)^6 \lto \shS \lto 0,$$
where
$$\shA= \left( \begin{array}{cc}
x & 0 \\
 - y & 0 \\
 - y & x \\
0 &  - y \\
 - z & 0 \\
0 & z \end{array}\right).$$
Since $\shS$ is a reflexive sheaf, it is locally free on $\PP^2$ (cf. \cite[Lemma 1.1.10]{okonekschneiderspindler}).
So if we dualize the resolution, we obtain a short exact sequence
$$0 \lto \shS^* \lto  \mathcal{O}_{\PP^2}(3)^6 \stackrel{\shA^t}\lto \mathcal{O}_{\PP^2}(4)^2 \lto 0,$$
i.e., $\shS^*$ is a kernel bundle. Hence, we apply Algorithm \ref{algorithmpn} to $\shS^*$ in order to obtain the answer to our question
(we recall that $\shS$ is semistable if and only if $\shS^*$ is semistable). A CoCoA computation gives:
\begin{enumerate}
\item $H^0(\PP^2,\shS^*(m))=0$ for $m < -2$ and $-2 \geq -\mu(\shS^*)=-\frac{5}{2}$.
\item $H^0(\PP^2,(\bigwedge^2(\shS^*))(m))=0$ for $m<-5$ and $-5 =-2 \mu(\shS^*)$. In particular, we obtain no information about stability.
\item The numerical condition of Proposition \ref{linebundlequotients} is fulfilled. So there are no mappings $\shS^* \ra \mathcal{O}_{\PP^2}(k)$ into line bundles which contradict the semistability.
\end{enumerate}
Eventually, we conclude via Lemma \ref{exteriorpowercriterion} that $\shS^*$ is semistable and so is $\shS$.
\end{example}

\section{Tannaka duality of stable syzygy bundles} \label{tannaka}

As a first application of the algorithms described in the previous sections we will compute the Tannaka dual groups of certain stable syzygy bundles of degree $0$ on the projective plane. We start with describing the setting. From now on, $K$ denotes an algebraically closed field of characteristic $0$ and $X$ a smooth, irreducible and projective variety over $K$. Furthermore, denote by $\mathfrak{B}_X$ the category of polystable vector bundles of degree $0$ on $X$. It is well-known that $\mathfrak{B}_X$ is an abelian tensor category that possesses the faithful fiber functor $\omega_x: \mathfrak{B}_X \to Vect(K)$, where $Vect(K)$ is the category of finite-dimensional $K$-vector spaces and $\omega_x$ maps a bundle $E$ to its fiber $\shE_x$ for a point $x \in X(K)$. In other words, it is a neutral Tannaka category and hence there exists an affine group scheme $G_X$ over $K$ and an equivalence of categories $$\mathfrak{B}_X \stackrel{\sim} \longrightarrow \mathbf{Rep}_{G_X}(K).$$ For the theory of Tannaka categories, see e.g. \cite{delignemilne}. We denote by $\mathfrak{B}_{\shE}$ the smallest Tannaka subcategory of $\mathfrak{B}_X$ containing the vector bundle $\shE$ and by $G_{\shE}$ its Tannaka dual group. The group scheme $G_X$ is pro-reductive and $G_{\shE}$ is a reductive linear algebraic group (not necessarily connected). It is in a natural way an algebraic subgroup of $\mbox{GL}_{\shE_x}$. Furthermore, there is a faithfully flat morphism $G_X \to G_{\shE}$.
Since global sections of vector bundles in $\mathfrak{B}_X$ correspond to $G_{\shE}$-invariant elements of the fiber, it follows from \cite[Proposition 3.1]{delignemilne} that the algebraic group $G_{\shE}$ is uniquely determined by the global sections of $T^{r,s}(\shE) := \shE^{\otimes r}\otimes (\shE^*)^{\otimes s}$ for $r,s \in \mathbb{N}$. It even suffices to know the global sections of $\shE^{\otimes r}$ for $r \in \mathbb{N}$, since the dual of a stable bundle $\shE$ occurs as a direct summand in some tensor power of $\shE$. 

\begin{remark}
\label{nottannakian}The restriction to fields of characteristic $0$ is essential, as the following example shows. It was communicated to us by H. Brenner. 
Let $K$ be a field of positive characteristic $p$, $p \geq 3$. Consider the plane curve $C=V_+(X^{3p-1}+Y^{3p-1}+Z^{3p-1}+X^p Z^{2p-1})$. This curve is smooth by the Jacobian criterion. Now we look at the syzygy bundle $\shE:=\Syz(X^2,Y^2,Z^2)(3)$ on $C$ of degree $0$. Since $\shE$ is stable on $\PP^2$  by Proposition \ref{parametersyzygybundle} and $p \geq 3$, it remains stable on $C$ by Langer's restriction theorem \ref{langerrestrictiontheorem}.
The Frobenius pull-back $F^*(\shE) \cong \Syz(X^{2p},Y^{2p},Z^{2p})(3p)$ has the non-trivial section $s:=(ZX^{p-1},ZY^{p-1},Z^p+X^p)$ because we have the equation
\begin{eqnarray*}
X^{2p} \cdot ZX^{p-1} + Y^{2p} \cdot ZY^{p-1} + Z^{2p} \cdot (Z^p+X^p) &=& \\ Z (X^{3p-1}+Y^{3p-1}+Z^{3p-1}+X^p Z^{2p-1}) &=& 0
\end{eqnarray*}
on the curve. It is easy to see that $s$ has no zeros on $C$ and that there is no further non-trivial section of $F^*(\shE)$. Hence $F^*(\shE)$ is a non-trivial extension of the structure sheaf by itself and therefore not polystable. Since $F^*(\shS) \subset S^p(\shE)$, it follows that $S^p \shE$ is not polystable either. The same holds for $\shE^{\otimes p}$ since $S^p \shE$ is a quotient of the $p$-fold tensor product. So we see that
$\mathfrak{B}_C^s$ is not a tensor-category and in particular not Tannakian.
\end{remark}

Let us now consider stable bundles $\shE$ of degree $0$ on the projective space $\PP^N$.
\begin{lemma} \label{connectedlemma}
The Tannaka dual group $G_{\shE}$ of a stable vector bundle $\shE$ of degree $0$ on $\PP^N$ is a connected semisimple group.
\end{lemma}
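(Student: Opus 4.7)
The plan is to prove the two remaining properties of $G := G_{\shE}$---connectedness and semisimplicity---separately, using in each case the Tannakian equivalence $\mathfrak{B}_{\shE} \simeq \mathbf{Rep}_{G_{\shE}}(K)$ to translate a statement about the group scheme into one about vector bundles on $\PP^N$. Reductivity of $G$ has already been recorded in the surrounding text, so no further work on it is required.

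First I would address connectedness by contradiction. If $G$ were not connected, the component group $\pi_0(G) = G/G^\circ$ would be a nontrivial finite group, and any faithful representation $V$ of $\pi_0(G)$ would yield a representation $\rho: G \to \mbox{GL}(V)$ with finite nontrivial image $\pi_0(G)$. Under the Tannakian dictionary, $\rho$ corresponds to an object $\shF \in \mathfrak{B}_{\shE}$ whose ``monodromy'' in $\mbox{GL}(\shF_x)$ is a finite group. Equivalently, $\shF$ is the vector bundle associated via $V$ to a principal $\pi_0(G)$-bundle on $\PP^N$, classified by an element of $H^1_{\text{\'et}}(\PP^N, \pi_0(G))$. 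Since $\PP^N$ over an algebraically closed field of characteristic $0$ is simply connected, this cohomology vanishes, so $\shF$ must be a trivial vector bundle; but then the action of $\pi_0(G)$ on $V$ is trivial, contradicting faithfulness.

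With $G$ now known to be connected reductive, semisimplicity is equivalent to the vanishing of the character group $X(G) = \mathrm{Hom}(G, \mathbb{G}_m)$. Under the same equivalence, a character of $G$ corresponds to a $1$-dimensional object of $\mathfrak{B}_{\shE}$, that is, to a line bundle $\shL$ in the Tannakian subcategory generated by $\shE$. Since tensor products, duals, direct sums, and direct summands of polystable bundles of degree $0$ remain polystable of degree $0$, every such $\shL$ satisfies $\deg(\shL) = 0$. The only degree-$0$ line bundle on $\PP^N$ is $\mathcal{O}_{\PP^N}$, so $X(G)$ is trivial and $G$ is semisimple.

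The principal obstacle is the first step: identifying representations of $G$ with finite image as coming from principal bundles for finite groups on $\PP^N$, and thereby reducing their triviality to the vanishing of $\pi_1^{\text{\'et}}(\PP^N)$. The needed input is essentially Nori's theory of the fundamental group scheme, which in characteristic $0$ collapses to the classical fact that $\PP^N$ is simply connected. The semisimplicity step, by contrast, is a one-line consequence of the observation that the only degree-$0$ line bundle on $\PP^N$ is $\mathcal{O}_{\PP^N}$.
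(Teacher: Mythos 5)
Your proposal is correct and follows essentially the same route as the paper: connectedness is ruled out by observing that a disconnected $G_{\shE}$ would produce nontrivial finite vector bundles on $\PP^N$, which cannot exist since (by Nori's theory, collapsing in characteristic $0$ to the \'etale fundamental group) $\PP^N$ is simply connected; and semisimplicity follows because characters of $G_{\shE}$ correspond to degree-$0$ line bundles in $\mathfrak{B}_{\shE}$, of which $\mathcal{O}_{\PP^N}$ is the only one. The paper's proof is just a more compressed version of the same two steps, citing Nori's Lemma 3.1 and Proposition 3.7 for the identification you describe via $H^1_{\text{\'et}}(\PP^N,\pi_0(G))$.
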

 
\begin{proof}
Suppose that the algebraic group $G_{\shE}$ is not connected. Then the representations of the finite quotient $G_{\shE}/G_{\shE}^0$ would correspond to a subcategory of $\mathfrak{B}_{\shE}$ containing nontrivial finite vector bundles, see \cite[Lemma 3.1]{nori}. But the latter form, together with the obvious fiber functor, a neutral Tannaka category (see \cite[Proposition 3.7]{nori}), and the $K$-valued points of the Tannaka dual group are well known to coincide with the \'etale fundamental group if the characteristic of the ground field is $0$. Hence there are no nontrivial finite vector bundles on the projective space. Furthermore, the reductive group $G_{\shE}$ does not have any nontrivial characters due to the fact that $\mbox{Pic}(\PP^N) = \ZZ$, hence it has to be semisimple.
\end{proof}

It was shown in \cite[Lemma 4.4 and Proposition 5.3]{monodromygroups} that for stable vector bundles of degree $0$ and rank $r$ as in Theorem \ref{bohnhorstspindlerhdoneintro} the almost simple components of the Tannaka dual group have to be of type $A$. We conjecture that it is always the standard $r$-dimensional representation of $\mathbf{SL}_r$ or its dual. One motivation for this paper was to construct examples of syzygy bundles on the projective space having a Tannaka dual group of type different from $A$. Note that one cannot simply try to guess an example for a syzygy bundle with group different from $\mathbf{SL}_r \subset \mathbf{GL}_r$, see the remark \ref{moduli} below. Our idea is to exclude this case by constructing self-dual bundles. With the algorithmic methods described in \ref{semistability} and Proposition \ref{exteriorpowerpresentations}, it is in principle possible to compute the Tannaka dual group and its representation for an arbitrary stable kernel bundle of degree $0$. However, the necessary computations grow very fast with the rank of the bundle, so we were only able to handle syzygy bundles up to rank $6$ on $\PP^2$. Furthermore, we only found syzygy bundles having the almost simple Tannaka dual group $\mathbf{Sp}_r \subset \mathbf{GL}_r$, where $r \in \{4,6\}$. There are no stable self-dual syzygy bundles of odd rank on $\PP^2$, see Corollary \ref{nooddrank}.\medskip

For stable rank $2$ bundles of degree $0$ on $\PP^2$, there is only one possible Tannaka dual group, namely the $2$-dimensional irreducible representation of $\mathbf{S}\mathbf{L}_2$. An example for this is the syzygy bundle $\mbox{Syz}(X^2,Y^2,Z^2)(3)$. It is stable due to Theorem \ref{bohnhorstspindlerhdoneintro}. To find higher rank syzygy bundles whose Tannaka dual group is not the group $\mathbf{SL}_r$ with an $r$-dimensional representation, we will use the following simple Lemma.

\begin{lemma} \label{selfdual}
 Let $f_1,\dots,f_n \in R:= K[X,Y,Z]$ be homogeneous polynomials  such that the ideal $I:=(f_1,\dots,f_n)$ is $R_+$-primary and minimally generated by $f_1,\dots, f_n$. Then $\shE:= \Syz(f_1,\dots,f_n)$ is self-dual $($up to a twist with a line bundle$)$ if and only if $R/I$ is Gorenstein. 
\end{lemma}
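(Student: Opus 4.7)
The strategy is to extract $\shE$ from the sheafified minimal graded free resolution of $R/I$, and to use the characterization of the Gorenstein property as the rank-one condition on the last module in this resolution.

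I first set up the minimal graded free resolution of $R/I$,
$$0 \lto F_3 \lto F_2 \lto F_1 \lto R \lto R/I \lto 0,$$
which has length three since $R/I$ is Artinian of codimension three in $R = K[X,Y,Z]$. The minimal generation hypothesis identifies $F_1 = \bigoplus_{i=1}^n R(-d_i)$, and $F_3 \neq 0$ by Auslander--Buchsbaum. Since $I$ is $R_+$-primary, $\widetilde{R/I} = 0$, so sheafification on $\PP^2$ yields an exact sequence of locally free sheaves that splits at $\shE$ into the defining sequence $0 \to \shE \to \widetilde F_1 \to \mathcal{O}_{\PP^2} \to 0$ and a locally free resolution $0 \to \widetilde F_3 \to \widetilde F_2 \to \shE \to 0$. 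Passing to $\Gamma_*$ on the defining sequence identifies $\Gamma_*(\shE)$ with the first syzygy module $\Syz(f_1,\ldots,f_n)$, whose minimal free resolution is the length-two tail $0 \to F_3 \to F_2 \to \Syz(f_1,\ldots,f_n) \to 0$. The classical input I will invoke is: $R/I$ is Gorenstein if and only if $F_3 \cong R(-a)$ has rank one, in which case uniqueness of minimal free resolutions applied to the dual complex $\mathrm{Hom}_R(F_\bullet, R)$ (which resolves $\mathrm{Ext}^3_R(R/I,R) \cong R/I(a)$) produces a self-duality identification $F_2 \cong F_1^*(-a)$ under which the differential $F_3 \to F_2$ becomes the $(-a)$-twisted transpose of $(f_1,\ldots,f_n)\colon F_1 \to R$.

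For the ``if'' direction, I substitute this into $0 \to \widetilde F_3 \to \widetilde F_2 \to \shE \to 0$ and twist by $\mathcal{O}_{\PP^2}(a)$ to obtain
$$0 \lto \mathcal{O}_{\PP^2} \stackrel{(f_1,\ldots,f_n)^T}{\lto} \bigoplus_{i=1}^n \mathcal{O}_{\PP^2}(d_i) \lto \shE(a) \lto 0.$$
This coincides with the dualization of the defining sequence, whose cokernel is $\shE^*$, so $\shE^* \cong \shE(a)$.

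For the converse, starting from an isomorphism $\shE^* \cong \shE(\ell)$, I dualize the defining sequence, substitute, and twist by $\mathcal{O}_{\PP^2}(-\ell)$ to obtain a locally free resolution
$$0 \lto \mathcal{O}_{\PP^2}(-\ell) \lto \bigoplus_{i=1}^n \mathcal{O}_{\PP^2}(d_i - \ell) \lto \shE \lto 0.$$
Applying $\Gamma_*$, and using that $H^1(\PP^2, \mathcal{O}_{\PP^2}(k))=0$ for every $k \in \ZZ$, this descends to a second length-two graded free resolution of $\Syz(f_1,\ldots,f_n)$. Any such resolution decomposes as the minimal one plus acyclic summands of the form $0 \to R(-b) \xrightarrow{\mathrm{id}} R(-b) \to 0$; since $R(-\ell)$ is indecomposable of rank one and $F_3 \neq 0$, this forces $F_3 \cong R(-\ell)$, so $R/I$ is Gorenstein.

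The main obstacle I expect lies in the ``if'' direction: matching the numerical twists is routine, but identifying the first differential of the self-dual Gorenstein resolution with the transpose $(f_1,\ldots,f_n)^T$ requires carefully tracking the self-duality isomorphism that arises from uniqueness of minimal free resolutions applied to $R/I$ and its dualization.
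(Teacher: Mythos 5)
Your proposal is correct and follows essentially the same route as the paper: in the Gorenstein direction it uses the self-duality (up to twist) of the length-three minimal free resolution of $R/I$ to identify the cokernel of the last differential with $\shE^*$ up to twist, and for the converse it fills in the paper's ``one easily sees'' by comparing the resolution coming from the dualized defining sequence with the minimal one to force the last Betti number to be $1$. The extra care you take with the twists and with tracking the self-duality isomorphism is exactly what the paper's terse proof leaves implicit.
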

 
\begin{proof}
The minimal free resolution of $R/I$ has length $3$ and is self-dual up to twist since $R/I$ is Gorenstein: $$ 0 \longrightarrow  R(-d) \stackrel{\varphi}{\longrightarrow}  \bigoplus_{i = 1}^nR(-e_i) \longrightarrow  \bigoplus_{i = 1}^nR(-d_i) \stackrel{f_1,\dots,f_n}{\longrightarrow}   R \longrightarrow  R/I \longrightarrow 0$$ and with $E:= \mbox{ker}(f_1,\dots, f_n)$ we have $\mbox{coker}(\varphi) = E(-d)^*$. In particular, there is an isomorphism $E \cong E(-d)^*$ with $\shE = \widetilde{E}$. Conversely, one easily sees that if $\shE$ is self-dual up to twist, then the minimal free resolution of $R/I$ ends with a free module of rank $1$, that is $R/I$ is Gorenstein. 
\end{proof}

\begin{corollary} \label{nooddrank}
 There are no self-dual $($up to a twist with a line bundle$)$ non-split syzygy bundles of odd rank on $\PP^2$. 
\end{corollary}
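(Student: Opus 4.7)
The plan is to combine Lemma \ref{selfdual} with the Buchsbaum--Eisenbud structure theorem for codimension three Gorenstein ideals. A syzygy bundle $\Syz(f_1,\dots,f_n)$ on $\PP^2$ has rank $n-1$, so assuming odd rank is equivalent to assuming that the number of generators $n$ is even. On the other hand, the non-splitness hypothesis lets me assume without loss of generality that $f_1,\dots,f_n$ minimally generate the $R_+$-primary ideal $I=(f_1,\dots,f_n)$: any redundant generator $f_i\in(f_1,\dots,\widehat{f_i},\dots,f_n)$ would split off a direct summand $\mathcal{O}_{\PP^2}(-d_i)$ from the syzygy bundle, contradicting non-splitness.

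Under these conditions Lemma \ref{selfdual} tells me that self-duality (up to twist) of $\shE=\Syz(f_1,\dots,f_n)$ is equivalent to $R/I$ being Gorenstein. Since $I$ is $R_+$-primary, the quotient $R/I$ is an Artinian Gorenstein $K$-algebra of codimension three in $R=K[X,Y,Z]$. I would then invoke the structure theorem of Buchsbaum--Eisenbud: every codimension three Gorenstein ideal is generated by the $2k\times 2k$ sub-Pfaffians of a $(2k+1)\times(2k+1)$ alternating matrix, and in particular admits a minimal generating system consisting of an \emph{odd} number of elements.

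Putting these facts together yields the contradiction: the minimality of $f_1,\dots,f_n$ forces $n$ to equal this odd number of Pfaffians, while the odd-rank assumption forces $n$ to be even. Therefore no such syzygy bundle can exist.

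The main obstacle I expect is nothing mathematically deep but rather making sure the citations and normalizations are clean: one has to phrase the reduction to a minimal generating system carefully so that the Gorenstein conclusion of Lemma \ref{selfdual} really applies, and to invoke Buchsbaum--Eisenbud in a form suited to the graded polynomial ring setting rather than the local one in which it is often stated. Beyond that, the argument is a short syllogism and should fit in a few lines.
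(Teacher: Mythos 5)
Your proposal is correct and follows essentially the same route as the paper, which reduces via Lemma \ref{selfdual} to the Gorenstein condition and then cites Buchsbaum--Eisenbud (their Corollary 2.2, a direct consequence of the Pfaffian structure theorem you invoke) for the oddness of the minimal number of generators of a grade-$3$ Gorenstein ideal. The only difference is that you spell out the reduction to a minimal generating system via the non-splitness hypothesis, a step the paper leaves implicit.
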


\begin{proof}
 This is \cite{buchsbaumeisenbud}, Corollary 2.2, which says that the minimal number of generators of a Gorenstein ideal of grade $3$ is odd.
\end{proof}

\begin{corollary}
 All stable syzygy bundles of degree $0$ and odd rank $\leq 11$ on the projective plane have a semisimple Tannaka dual group whose simple components are of type $A$.
\end{corollary}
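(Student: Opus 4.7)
The plan is to combine the two structural results already established in this section (Lemma \ref{connectedlemma} and Corollary \ref{nooddrank}) with a short enumeration of low-dimensional irreducible representations of semisimple groups. Let $\shE$ be stable of degree $0$ and odd rank $r\le 11$ on $\PP^2$. By Lemma \ref{connectedlemma}, $G:=G_\shE$ is a connected semisimple linear algebraic group, and its tautological $r$-dimensional representation $V\subset \mbox{GL}_{\shE_x}$ is irreducible since $\shE$ is stable. Since $r$ is odd, Corollary \ref{nooddrank} yields $\shE\not\cong \shE^*(n)$ for every $n\in\ZZ$; comparing degrees forces $n=0$, so $V\not\cong V^*$ as a representation of $G$.

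Next, I would write $G$ (up to a finite central isogeny) as a product $G_1\times\cdots\times G_k$ of almost simple factors, giving an external tensor decomposition $V=V_1\boxtimes\cdots\boxtimes V_k$ with each $V_i$ a nontrivial irreducible representation of $G_i$. Such an external tensor product is self-dual if and only if every factor $V_i$ is self-dual, so the non-self-duality of $V$ translates into the statement that at least one of the $V_i$ is non-self-dual. The numerical condition $r=\prod_i \dim V_i$ with $r\le 11$ odd and every $\dim V_i\ge 3$ and odd leaves only two possibilities: either $k=1$ with $\dim V_1=r\in\{3,5,7,9,11\}$, or $k=2$ with $r=9$ and $\dim V_1=\dim V_2=3$.

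The remaining step is a finite case check: for each $d\in\{3,5,7,9,11\}$, list all almost simple $K$-groups admitting an irreducible representation of dimension $d$, using (for instance) the Weyl dimension formula. In dimension $3$ one finds only $A_2$ (standard/dual) and $A_1$ (adjoint), both of type $A$; in dimension $5$ the only non-type-$A$ possibility is the standard representation of $B_2=C_2$, which is self-dual; in dimensions $7,9,11$ the non-type-$A$ candidates are the standard representations of $B_3,B_4,B_5$ and the seven-dimensional representation of $G_2$, all of which are self-dual. Consequently, in the single-factor case the non-self-duality of $V_1$ forces $G_1$ to be of type $A_{r-1}$, and in the two-factor case $r=9=3\cdot 3$ every simple group occurring as a factor is already forced to be of type $A$. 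I expect the main obstacle to be nothing more than the bookkeeping in the enumeration, i.e., verifying that no further simple groups sneak into these small odd dimensions; this is entirely mechanical and can be cross-checked against standard tables of low-dimensional irreducible representations of simple complex Lie algebras.
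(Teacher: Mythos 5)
Your proof is correct and follows essentially the same route as the paper: both reduce the claim via Corollary \ref{nooddrank} to the non-self-duality of the fiber representation and then dispose of the non-type-$A$ possibilities by consulting tables of low-dimensional irreducible representations. The only difference is that you carry out the enumeration explicitly (using that every tensor factor must have odd dimension $\geq 3$), whereas the paper simply cites the single fact that the smallest non-self-dual irreducible representation of a semisimple group with a simple component not of type $A$ is the $12$-dimensional representation of $\mathbf{SL}_3\times\mathbf{Sp}_4$.
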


\begin{proof} A table of representations of Lie algebras (e.g. \cite{tableslie}) shows that the smallest non-self-dual irreducible representation of a semisimple algebraic group with some simple component not of type $A$ is the $($up to duality$)$ $12$-dimensional representation of the semisimple group $\mathbf{S}\mathbf{L}_3 \oplus \mathbf{S}\mathbf{p}_4$, which is the tensor product of the $3$-dimensional irreducible representation of $\mathbf{S}\mathbf{L}_3$ with the $4$-dimensional irreducible representation of $\mathbf{S}\mathbf{p}_4$.
\end{proof}

Now looking at rank $4$, what possibilities are there for the Lie algebra of the Tannaka dual group? There are three different self-dual and irreducible representations, where we use the notations of the tables of simple Lie algebras and their representations in \cite{tableslie}: $A_1$ with highest weight $3$, $C_2$ with highest weight $(1,0)$, and $A_1 \oplus A_1$ with highest weight $(1,1)$. Using the computer algebra package for Lie group computations LiE \cite{LiE}, one finds that $\dim(\Gamma(\PP^2, \shE^{\otimes 4}))= \dim((\shE_x^{\otimes 4})^{G_{\shE}}) = 4$ in the cases of type $A$ and $\dim(\Gamma(\PP^2, \shE^{\otimes 4}))=3$ in the case of $C_2$. To find a bundle with Tannaka dual group of type $C_2$, we have to construct a Gorenstein ideal $I$ with $5$ minimal generators, such that the associated syzygy bundle $\shE$ has degree $0$. Then we have to check its stability and the global sections of $\shE^{\otimes 4}$. \medskip

To find a suitable Gorenstein ideal $I$, we consider the polynomial ring $R:=K[X,Y,Z]$ as a module over itself by interpreting a polynomial in $R$ as differential operator acting on itself, e.g., $X\cdot f = \frac{\partial f}{\partial X}$. Choose a homogeneous polynomial $f$ of degree $r$ and define $I := \mbox{Ann}_R(f) \subseteq R$. This ideal is Artinian and Gorenstein, see for example \cite{gorensteinalgebras}, Lemma 2.12. 

\begin{example} \label{exampleofstablebundle}
 The Gorenstein ideal of the homogeneous form $$f:= X^2+Y^2+Z^2$$ is the ideal $I = (X^2-Y^2,X^2-Z^2,XY,XZ,YZ)$. If we pull back the associated syzygy bundle on $\PP^2$ via the finite morphism $X \mapsto X^2,Y \mapsto Y^2, Z \mapsto Z^2$, we obtain the twisted syzygy bundle $$\shE(5):= \Syz(X^4-Y^4,X^4-Z^4,X^2Y^2,X^2Z^2,Y^2Z^2)(5).$$ It has degree $0$, rank $4$ and is self-dual by Lemma \ref{selfdual}. We apply Algorithm \ref{algorithmpn} to $\shE$ and obtain with the help of CoCoA:
\begin{enumerate}
\item $H^0(\PP^2,\shE(m)) = 0$ for $m \leq 5= -\mu(\shE)$;
\item $H^0(\PP^2,(\bigwedge^2\shE)(m))=0$ for $m <10=-2\mu(\shE)$,  $H^0(\PP^2,(\bigwedge^2\shE)(m))\neq 0$ for $m=10$;
\item $H^0(\PP^2,(\bigwedge^3\shE)(m))=0$ for $m \leq 15=-3\mu(\shE)$ (this computation is, by Proposition \ref{linebundlequotients}, actually not necessary since the degrees of the polynomials are constant).
\end{enumerate}
Hence, we see that $\shE$ $($and all its twists$)$ are semistable, but we get no information about stability because of $($2$)$. Observe that Hoppe's criterion can never reveal stability of a self-dual bundle of rank $4$, since in this case there must be nontrivial global sections of $\Lambda^2(\shE(5))$, see again \cite{tableslie}. \medskip

Fortunately, the self-duality of the bundle allows us to prove its stability. By the computation above, we only have to consider subsheaves of rank $2$ which may destroy the stability of $\shE(5)$. Assume that $\shF \subset \shE(5)$ is a stable subsheaf of rank $2$ and degree $0$. Since we can pass over to the reflexive hull, and since we are working on $\PP^2$, we may assume
that $\shF$ is locally free (see \cite[Lemma 1.1.10]{okonekschneiderspindler}). In particular, we have
$$\shF \cong \shF^* \otimes \det(\shF) \cong \shF^* \otimes \mathcal{O}_{\PP^2} \cong \shF^*$$ by \cite[Proposition 1.10]{hartshornestablereflexive}. That is, the subsheaf $\shF$ is self-dual too. So the composition of the morphisms
$$\shE(5) \cong \shE(5)^* \lto \shF^* \cong \shF \hookrightarrow \shE(5)$$
yields an endomorphism of $\shE(5)$, which is not a multiple of the identity. But a computation of global sections using the implementation of Proposition \ref{exteriorpowerpresentations} yields that $$h^0(\PP^2,\End(\shE(5))) = h^0 (\PP^2, \shE(5) \otimes \shE(5)) =1,$$ that is, the bundle $\shE(5)$ is simple and a morphism as above does not exist. Hence, the bundle $\shE(5)$ is stable. Finally, another computation shows $h^0(\PP^2,(\shE(5)^{\otimes 4})) = 3$, hence the Tannaka dual group is in fact almost simple of type $C_2$ and the representation of its Lie algebra has highest weight $(1,0)$. It is well-known that this corresponds to the irreducible and faithful representation $\mathbf{Sp}_4 \subset \mathbf{GL}_4$.
\end{example}

\begin{example} \label{secondexample}
The Gorenstein ideal associated to the homogeneous form $X^3Y+Y^3Z+Z^3X$ via the correspondence described above is the ideal $$I := (X^3-Y^2Z,Y^3-XZ^2,X^2Y-Z^3,XY^2,YZ^2,X^2Z,XYZ).$$ We consider the same pull-back as in the previous example to be able to twist the associated syzygy bundle to degree $0$. The same computations as above show that the corresponding syzygy bundle $\shE(7)$, defined as $$\Syz(X^6-Y^4Z^2,Y^6-X^2Z^4,X^4Y^2-Z^6,X^2Y^4,Y^2Z^4,X^4Z^2,X^2Y^2Z^2)(7)$$ is semistable of degree $0$, where a subsheaf $\shF$ that destroys stability has to be of rank $r=4$ or $r=2$. But since $\shE(7)$ is self-dual, we can always assume $\shF$ to be of rank $2$ and obtain stability for the same reason as above, since the bundle again turns out to be simple. Again using LiE \cite{LiE}, we find that it is possible to determine the Lie algebra of the Tannaka dual group by computing $\dim(\shE^{\otimes 4}) = 3$, which shows that it has to be simple of type $C_3$, with highest weight $(1,0,0)$. This corresponds to the faithful irreducible representation $\mathbf{Sp}_6 \subset \mathbf{GL}_6$.
\end{example}    

\begin{remark} \label{moduli}
There is a good reason why one should expect to find the group $\mathbf{Sp}_{r}$ in these cases. It is well-known that the moduli space of stable bundles of fixed rank and Chern classes exists as a quasi-projective variety, see for example \cite[Theorem 4.3.4]{huybrechtslehn}. Let us denote by $M$ the moduli space containing the syzygy bundle of Example \ref{exampleofstablebundle} respectively of Example \ref{secondexample}. Let $\mathcal{U}$ be the quasi-universal bundle on $\PP^2 \times M$, see \cite[Chapter 4.6]{huybrechtslehn}. This means that the restriction of $\mathcal{U}$ to $\PP^2 \times \{p\}$ is a finite product of copies of the stable bundle on $\PP^2$ corresponding to the point $p$. Applying the semicontinuity theorem $($ e.g. \cite[Theorem 12.8]{hartshornealgebraic} $)$ for $\mbox{dim}(H^0(\PP^2 \times \{p\}, \mathcal{U}^{\otimes r}))$ one finds that the locus of the vector bundles having Tannaka dual group $\mathbf{SL}_r$ is open in $M$ since the dimension of $\Gamma(\PP^2 \times \{p\}, \mathcal{U}^{\otimes r})$ is minimal in this case and strictly higher in all other cases. Hence for a generic choice of a stable syzygy bundle on $\PP^2$ one expects to find the group $\mathbf{SL}_r$ as Tannaka dual group. The locus of self-dual bundles is closed in $M$ (apply the semicontinuity theorem for $\mathcal{U} \otimes \mathcal{U}$), containing the bundles with Tannaka dual group $\mathbf{Sp}_r$ as an open locus (since the dimension of $\Gamma(\PP^2 \times \{p\}, \mathcal{U}^{\otimes 4})$ is minimal for a self-dual stable bundle with this Tannaka dual group, see the discussion of the examples above). It follows that for  a generic choice of a self-dual syzygy bundle one expects the group $\mathbf{Sp}_r$ as Tannaka dual group. It would be interesting to find a method for constructing stable syzygy bundles having a Tannaka dual group different from $\mathbf{SL}_r$ or $\mathbf{Sp}_r$. Furthermore, one could try to determine the geometry of the Tannaka-strata of the moduli spaces discussed above.
\end{remark}

\section{Restriction theorems -- a brief overview} 
\label{sectionrestrictiontheorems}

Algorithm \ref{algorithmpn} enables us to determine semistability (and in some cases stability) of kernel bundles on projective spaces. If our algorithm gives a positive answer, then what can be said about the semistability (respectively stability) of these bundles when we restrict them to a hypersurface $X \subset \PP^N$?  The answer is given by so-called \emph{restriction theorems} which ensure the semistability (stability) of the restriction of a bundle to hypersurfaces of sufficiently large degree. These theorems hold in more general situations, we only give formulations for vector bundles on projective spaces. By applying restriction theorems, we can use Algorithm \ref{algorithmpn} to produce examples of semistable vector bundles
on more complicated projective varieties.

We omit the famous restriction theorem of Mehta and Ramanathan (see \cite[Theorem 6.1]{mehtaramanathanrestriction} or \cite[Theorem 7.2.8]{huybrechtslehn}), which works over an arbitrary algebraically closed field $K$ and provides the existence of an integer
$k_0$ such that for $c$ general elements $D_1 \komdots D_c \in |\mathcal{O}_{\PP^N}(k)|$ the restriction of a semistable torsion-free sheaf $\shE$ on $\PP^N$ to the smooth complete intersection $D_1 \cap \ldots \cap D_c$ is semistable for all $k \geq k_0$. But their theorem provides no bound for $k_0$. Unlike the theorem of Mehta and Ramanathan, the following restriction theorem of H. Flenner gives an explicit bound for $k_0$, but only works in characteristic $0$.

\begin{theorem}[Flenner]
\label{flennerrestrictiontheorem}
Let $K$ be an algebraically closed field of characteristic $0$ and let $\shE$ be a semistable coherent torsion-free sheaf of rank $r$ on
$\PP^N$. Then for $k$ and $1 \leq c \leq N-1$ fulfilling
$$\frac{{k+N \choose N} -ck-1}{k} > \max \{\frac{r^2-1}{4},1\}$$
and for $c$ general elements $D_1 \komdots D_{c} \in |\mathcal{O}_{\PP^N}(k)|$ the restriction $\shE|_C$ is semi\-stable on the smooth
complete intersection $C=D_1 \cap \ldots \cap D_{c}$.
\end{theorem}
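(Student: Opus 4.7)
The strategy is induction on the codimension $c$, with the case $c = 1$---restriction to a single hypersurface of degree $k$---carrying essentially all the content. For the inductive step one applies the $c=1$ case to the restriction $\shE|_{D_1 \cap \cdots \cap D_{c-1}}$ on the intermediate complete intersection, provided the numerical inequality propagates to each successive ambient variety; this is a routine binomial calculation, since the relevant space of sections of $\mathcal{O}(k)$ only shrinks by a predictable amount at each step.

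For the hypersurface case I would argue by contradiction. Assume that $\shE|_D$ fails to be semistable for a general $D \in |\mathcal{O}_{\PP^N}(k)|$. Form the incidence variety $Z \subset \PP^N \times |\mathcal{O}_{\PP^N}(k)|$ with projections $p \colon Z \to \PP^N$ and $q \colon Z \to |\mathcal{O}_{\PP^N}(k)|$, pull $\shE$ back via $p$, and take the relative Harder--Narasimhan filtration of $p^*\shE$ along the fibres of $q$. Over a dense open subset $U \subset |\mathcal{O}_{\PP^N}(k)|$ this yields a coherent destabilizing subsheaf $\shF \subset p^*\shE|_{q^{-1}(U)}$ whose fibrewise slope exceeds $k \mu(\shE)$. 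A Grauert--M\"ulich-type argument now either produces a global destabilizing subsheaf of $\shE$ on $\PP^N$---which immediately contradicts the assumed semistability---or else yields a nontrivial section of a suitable twist of $\mathcal{H}\!om(\shF, p^*\shE / \shF)$ on $\PP^N$, which must be controlled cohomologically.

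The remaining step is a dimension and cohomology count. The Bogomolov inequality applied to the discriminant of the pair $(\shF, p^*\shE)$ restricted to a generic $D$ bounds the possible slope gap by a term of order $r^2$, while the available parameter space of hypersurfaces has dimension $\binom{k+N}{N} - 1$. Balancing these two inputs produces exactly the displayed inequality: the numerator $\binom{k+N}{N} - ck - 1$ measures the effective number of independent sections of $\mathcal{O}_{\PP^N}(k)$ after accounting for the constraints imposed on a codimension-$c$ complete intersection, and the factor $(r^2-1)/4$ is the Bogomolov discriminant contribution (the $\max$ with $1$ concerns only the small-rank cases $r = 1,2$, handled directly). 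The main obstacle is the Grauert--M\"ulich-type lifting step, which turns the fibrewise destabilizing data into a cohomological object on the ambient $\PP^N$ to which the Bogomolov inequality can be applied. This is precisely where characteristic $0$ enters essentially, since in positive characteristic semistability is not in general preserved under the tensor operations used to establish the Bogomolov inequality (cf.\ Lemma \ref{semistabilitytensorposchar}). Flenner's improvement over Mehta--Ramanathan consists in making all of these estimates effective, so that the resulting bound on $k$ is explicit rather than merely existential.
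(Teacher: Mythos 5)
First, a point of reference: the paper does not prove Theorem \ref{flennerrestrictiontheorem} at all --- it simply cites Flenner's original article and Huybrechts--Lehn --- so the only meaningful comparison is with Flenner's actual argument. Your opening moves do match that argument: the incidence variety over the linear system, the relative Harder--Narasimhan filtration over a dense open subset of the parameter space, the dichotomy ``either the destabilizing subsheaf descends to $\PP^N$ (contradiction with semistability) or one obtains a nonzero section of a Hom-sheaf that has to be excluded cohomologically'', and the correct identification of where characteristic $0$ enters (semistability of tensor operations, cf.\ Lemma \ref{semistabilitytensorposchar}).

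The gap is in the final quantitative step. You derive the factor $\max\{(r^2-1)/4,1\}$ from ``the Bogomolov inequality applied to the discriminant''. That cannot be the mechanism, and the shape of the statement already tells you so: Flenner's bound depends only on the rank $r$ of $\shE$ and on $N$, $k$, $c$, whereas any argument routed through the discriminant necessarily produces a bound involving $\Delta(\shE)=2rc_2(\shE)-(r-1)c_1(\shE)^2$ --- which is exactly what happens in the Bogomolov/Langer restriction theorems, e.g.\ Theorem \ref{langerrestrictiontheorem} of this paper, whose hypothesis reads $k>\frac{r-1}{r}\Delta(\shE)+\frac{1}{r(r-1)}$. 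In Flenner's proof the two sides of the displayed inequality have different origins: the left-hand side $({k+N \choose N}-ck-1)/k$ is governed by the slopes of tensor operations on the kernel bundle $M=\ker\bigl(H^0(\mathcal{O}_{\PP^N}(k))\otimes\mathcal{O}_{\PP^N}\to\mathcal{O}_{\PP^N}(k)\bigr)$, which has rank ${k+N \choose N}-1$ and degree $-k$ and whose exterior and tensor powers are again semistable in characteristic $0$; the right-hand side $\max\{(r^2-1)/4,1\}$ is a purely combinatorial estimate involving only the ranks and slopes of the Harder--Narasimhan factors of the restriction. The failure of descent is converted into a nonzero homomorphism from a tensor construction on $M$ into a Hom-sheaf built from consecutive Harder--Narasimhan factors, and the displayed inequality is precisely the condition forcing that Hom-sheaf to have negative slope, hence no global sections. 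Your proposed induction on $c$ is also not how the argument is organized: the intermediate complete intersections are no longer projective spaces, so the $c=1$ case as you state it does not apply to them; Flenner treats all $c$ at once by working over the product of $c$ copies of $|\mathcal{O}_{\PP^N}(k)|$. As written, the proposal would not produce the stated bound.
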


\begin{proof}
See \cite[Theorem 1.2]{flennerrestriction} or \cite[Theorem 7.1.1]{huybrechtslehn}.
\end{proof}

The strongest restriction theorem is due to A. Langer. It works in arbitrary characteristic and gives a degree bound for arbitrary smooth hypersurfaces in projective space. It involves the \emph{discriminant} $\Delta(\shE):=2rc_2(\shE)-(r-1)c_1(\shE)^2$ of a locally free sheaf $\shE$ of rank $r$, where $c_1(\shE)$ and $c_2(\shE)$ denote the first and second Chern class respectively.

\begin{theorem}[Langer]
\label{langerrestrictiontheorem}
Let $K$ be an algebraically closed field and let $\shE$ be a stable coherent torsion-free sheaf of rank $r \geq 2$ on $\PP^N$ and let $D \in |\mathcal{O}_{\PP^N}(k)|$ be a smooth divisor such that $\shE|_D$ is torsion-free. If
$$k > \frac{r-1}{r}\Delta(\shE)+\frac{1}{r(r-1)},$$
then the restriction $\shE|_D$ is stable.
\end{theorem}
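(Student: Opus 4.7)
The plan is to argue by contradiction, combining a lifting construction (from subsheaves on $D$ to subsheaves on $\PP^N$) with Langer's strong form of the Bogomolov--Gieseker inequality in arbitrary characteristic. Assume toward a contradiction that $\shE|_D$ is not stable and pick a saturated destabilizing subsheaf $\shF \subset \shE|_D$ of some rank $s$ with $1 \leq s < r$ and $\mu(\shF) \geq \mu(\shE|_D)$.

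The next step is to transfer this destabilizing data back to $\PP^N$ by considering the subsheaf $\shG \subset \shE$ defined as the kernel of the composition $\shE \twoheadrightarrow \shE|_D \twoheadrightarrow (\shE|_D)/\shF$. Then $\shG$ has the same rank as $\shE$, fits into a short exact sequence $0 \to \shE(-D) \to \shG \to \shF \to 0$, and the quotient $\shE/\shG$ is a torsion sheaf supported on $D$ of generic rank $r-s$. Iterating this ``restrict and lift'' procedure along $D$ produces a descending chain of rank-$r$ subsheaves of $\shE$ whose slopes are controlled in terms of $\mu(\shF)$, $\mu(\shE)$ and $k$.

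The stability of $\shE$ on $\PP^N$ forces each of these subsheaves to have slope strictly less than $\mu(\shE)$; on the other hand, Langer's refined Bogomolov inequality applied to the semistable Harder--Narasimhan factors of $\shE$ (and, in positive characteristic, to the asymptotic Harder--Narasimhan polygons of their Frobenius pull-backs) gives a lower bound on $\Delta(\shE)$ in terms of the slope gap between $\shF$ and $\shE|_D$, the ranks $r,s$, and $k$. Comparing these two competing bounds, the contradiction is extracted from the hypothesis $k > \frac{r-1}{r}\Delta(\shE) + \frac{1}{r(r-1)}$.

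The main obstacle is obtaining the correct Bogomolov-type inequality in arbitrary characteristic: the classical version fails because the Frobenius pull-back of a semistable sheaf need not be semistable (as already indicated in the discussion preceding Lemma \ref{semistabilitytensorposchar}), so the essential input is Langer's refinement, which replaces the ordinary discriminant bound by one involving the maximal and minimal slopes of iterated Frobenius pull-backs. Once that tool is in hand, tracking the precise constants $\frac{r-1}{r}$ and $\frac{1}{r(r-1)}$ is a careful but essentially mechanical slope/discriminant bookkeeping along the chain produced in the lifting step.
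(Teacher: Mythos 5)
The paper does not actually prove this statement: it is quoted as an external result, and the ``proof'' consists of the citation \cite[Theorem 2.19]{langersurvey}, which in turn rests on Langer's work on semistable sheaves in arbitrary characteristic. So there is nothing in the paper to compare your argument against line by line; the relevant comparison is with Langer's own proof, whose broad architecture --- an elementary modification $\shG=\ker\bigl(\shE\to(\shE|_D)/\shF\bigr)$ attached to a saturated destabilizing subsheaf $\shF\subset\shE|_D$, followed by an application of a Bogomolov-type inequality valid in positive characteristic via slopes of iterated Frobenius pull-backs --- you have correctly identified.

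That said, what you have written is a plan rather than a proof, and the gap sits exactly where you declare the remaining work to be ``essentially mechanical''. The theorem is a sharp quantitative statement, and neither of your two ``competing bounds'' is actually derived: you do not compute $\Delta(\shG)$ from the exact sequences $0\to\shG\to\shE\to(\shE|_D)/\shF\to 0$ and $0\to\shE(-D)\to\shG\to\shF\to 0$ (this is where the term $s(r-s)k^2$ and the dependence on the destabilizing slope gap enter), you do not state the precise form of Langer's refined Bogomolov inequality being applied to $\shG$, and you do not explain where the summand $\frac{1}{r(r-1)}$ comes from (it is the minimal slope defect $\mu_{\max}(\shG)\leq\mu(\shE)-\frac{1}{r(r-1)}$ forced by the stability of $\shE$ together with integrality of degrees and ranks). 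Without these computations one cannot verify that the hypothesis on $k$ produces a contradiction, which is the entire content of the theorem. Two smaller points: the ``iterated'' descending chain of rank-$r$ subsheaves is not needed --- a single elementary modification suffices in the standard argument --- and invoking the Harder--Narasimhan factors of $\shE$ itself is beside the point, since $\shE$ is stable by hypothesis; the refined inequality must be applied to the possibly unstable modified sheaf $\shG$.
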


\begin{proof}
See \cite[Theorem 2.19]{langersurvey}.
\end{proof}

For a kernel bundle $\shE$ given by a short exact sequence
$$0 \lto \shE \lto \bigoplus_{i=1}^n \mathcal{O}_{\PP^N}(a_i) \lto \bigoplus_{j=1}^m \mathcal{O}_{\PP^N}(b_j) \lto 0$$
we have already mentioned in Section \ref{syzygyandkernelbundles} that $c_1(\shE)= \sum_{i=1}^n a_i -\sum_{j=1}^m b_j$. Since the Chern polynomial is multiplicative on short exact sequences, it is also easy to see that we have
$$c_2(\shE)= \frac{1}{2} \left((\sum_{i=1}^n a_i)^2 - \sum_{i=1}^n a_i^2+ (\sum_{j=1}^m b_j)^2  + \sum_{j=1}^m b_j^2 \right) - \sum_{i,j} a_ib_j$$
and hence
$$\Delta(\shE)=(\sum_{i=1}^n a_i)^2 + (\sum_{j=1}^m b_j)^2 -(n-m)(\sum_{j=1}^m b_j^2 - \sum_{i=1}^n a_i^2) -2 \sum_{i,j}a_ib_j.$$
In particular, we have for a syzygy bundle $\shS:=\Syzfn$ given by homogeneous $R_+$-primary polynomials $f_1 \komdots f_n \in R = K[X_0 \komdots X_N]$ of degrees $d_1 \komdots d_n$ the formula $\Delta(\shS) = (\sum_{i=1}^n d_i)^2 - (n-1) \sum_{i=1}^n d_i^2$.

In \cite{brennerstronglysemistable} H. Brenner has shown that there is no restriction theorem for strong semistability in the sense of
Theorem \ref{langerrestrictiontheorem}. For general hypersurfaces $X \subset \PP^N$ there is the following Flenner-type restriction theorem which is also due to A. Langer.

\begin{theorem}[Langer]
\label{langerstronglysemistablerestriction}
Let $\shE$ be a semistable locally free sheaf of rank $r \geq 2$ on $\PP^N$ over an algebraically closed field $K$ of positive characteristic. Let $k$ be an integer such that
$$k> \frac{1}{2} \max \left\{\Delta(\shE), N^5-2N^3+2N+1  \right \}$$
and
$$\frac{{k+N \choose N} -1}{k} > \max \left \{\frac{r^2-1}{4},1  \right \}+1.$$
Then $\shE|_D$ is strongly semistable on the general hypersurface in $|\mathcal{O}_{\PP^N}(k)|$.
\end{theorem}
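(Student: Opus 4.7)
The plan is to reduce the claim to a Flenner-type restriction argument applied uniformly across the entire Frobenius tower $\{F^{e*}\shE\}_{e \geq 0}$, exploiting that strong semistability is automatic on $\PP^N$.

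First I observe that since $\PP^N$ is a homogeneous space with $\mu_{\max}(\Omega_{\PP^N}) \leq 0$, Lemma~\ref{semistabilitytensorposchar} yields that $\shE$ is in fact \emph{strongly} semistable; hence $F^{e*}\shE$ is semistable on $\PP^N$ for every $e \geq 0$. Because the absolute Frobenius on $\PP^N$ preserves every closed subscheme and restricts to the absolute Frobenius on $D$, one has $F_D^{e*}(\shE|_D) = (F^{e*}\shE)|_D$, so strong semistability of $\shE|_D$ is equivalent to semistability of each $(F^{e*}\shE)|_D$ on the hypersurface $D$.

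The naive plan is now to invoke Flenner's theorem \ref{flennerrestrictiontheorem} separately for each $F^{e*}\shE$. The core obstacle is that $\Delta(F^{e*}\shE)=p^{2e}\Delta(\shE)$ blows up with $e$, so any bound that involves the discriminant of the sheaf being restricted fails to be uniform in $e$. The idea is therefore to argue by contradiction: if $(F^{e*}\shE)|_D$ is destabilized by some subsheaf $\mathcal{G}$, then by a parameter-space / Mehta--Ramanathan type argument applied to the family over the smooth locus of the linear system $|\mathcal{O}_{\PP^N}(k)|$, one seeks to extend $\mathcal{G}$ to a subsheaf of $F^{e*}\shE$ on $\PP^N$ of comparable slope that would destabilize it, contradicting Step~1.

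The key estimate needed to carry this extension out is a refined Bogomolov--Flenner type inequality on $D$ in positive characteristic, of the shape $\mu_{\max}(\shF|_D)-\mu_{\min}(\shF|_D)\le C$ where $C$ depends only on $r$, $\Delta(\shE)$, $N$, and $k$, and crucially \emph{not} on the Frobenius iterate $e$. The two hypotheses on $k$ in the statement correspond precisely to the two inputs this requires: the condition $(\binom{k+N}{N}-1)/k > (r^2-1)/4+1$ is the Flenner ampleness condition that powers the Kodaira--Spencer/Grauert--M\"ulich argument for a generic $D$, while the bound $k > \tfrac{1}{2}\max\{\Delta(\shE),\,N^5-2N^3+2N+1\}$ encodes (i)~that $\Omega_D$ has $\mu_{\max}$ small enough not to interfere with the Bogomolov inequality on $D$ (this is where the dimensional polynomial in $N$ enters, via standard bounds for the cotangent bundle of a smooth hypersurface) and (ii)~that the discriminant $\Delta(\shE)$ itself is dominated by the positivity of $\mathcal{O}_{\PP^N}(k)|_D$, which allows Chern-class estimates to close the argument independently of $e$. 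The main obstacle is precisely this decoupling of the bound on $k$ from the Frobenius iterate; once it is secured, applying the argument with the same generic $D$ for each $e = 0, 1, 2, \ldots$ yields the strong semistability of $\shE|_D$.
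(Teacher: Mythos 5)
The first thing to note is that the paper does not prove this statement at all: it is quoted from Langer, and the ``proof'' consists of the citation \cite[Theorem 3.1]{langersemistablerestrictionnote}. So your attempt is being measured against Langer's argument rather than anything carried out in this article. Your opening reductions are correct: $\mu_{\max}(\Omega_{\PP^N})=-\frac{N+1}{N}<0$, so by Lemma~\ref{semistabilitytensorposchar} every Frobenius pull-back $F^{e*}\shE$ is again semistable on $\PP^N$, Frobenius commutes with restriction to $D$, and hence strong semistability of $\shE|_D$ is equivalent to semistability of $(F^{e*}\shE)|_D$ for every $e\geq 0$. You also rightly observe that $\Delta(F^{e*}\shE)=p^{2e}\Delta(\shE)$ ruins any bound that feeds the discriminant of the sheaf being restricted into the degree of the hypersurface.

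From that point on, however, the proposal asserts rather than proves the one statement that carries all of the content: the ``refined Bogomolov--Flenner type inequality'' $\mu_{\max}(\shF|_D)-\mu_{\min}(\shF|_D)\le C$ with $C$ independent of $e$. As formulated this is not even of the right shape --- if some $F^{e*}\shE|_D$ were unstable, its slope gap would typically grow like $p^{e}$, so an $e$-independent bound on the unnormalized gap, small enough to force semistability, is essentially equivalent to the theorem itself, and you offer no argument for it. The claim that the two numerical hypotheses ``correspond precisely'' to the inputs of this inequality, and in particular that $N^5-2N^3+2N+1$ comes from ``standard bounds for the cotangent bundle of a smooth hypersurface,'' is a guess, not a derivation. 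Most seriously, the closing sentence ``applying the argument with the same generic $D$ for each $e$'' skates over the central difficulty of any restriction theorem for strong semistability: each $e$ a priori yields its own dense open set $U_e$ of good hypersurfaces, and over the countable field $\overline{\FF}_p$ the intersection $\bigcap_{e}U_e$ may be empty. This is precisely the phenomenon behind Brenner's result \cite{brennerstronglysemistable}, quoted in the same section of the paper, that no Bogomolov-type restriction theorem for strong semistability exists for arbitrary smooth hypersurfaces. A correct proof must produce a single (or finitely many) open condition on $D$ controlling all Frobenius pull-backs simultaneously, and that is exactly the step your sketch leaves open.
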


\begin{proof}
See \cite[Theorem 3.1]{langersemistablerestrictionnote}.
\end{proof}

\section{Tannaka duality of stable bundles restricted to curves} \label{tannakaoncurves}
We return to the situation of section \ref{tannaka}. Recall that the ground field $K$ is algebraically closed and of characteristic $0$.
Here we investigate the problem of the behavior of Tannaka dual groups after restricting a stable bundle of degree $0$ on $\PP^N$ to  smooth connected curves $X$ such that the restricted bundle is still stable.
The main problem is that the connected Tannaka dual group of a stable vector bundle on $\PP^N$ may become disconnected after restriction to a curve:

\begin{lemma}
Let $X \subset \PP^2$ be a connected smooth curve of genus $>1$. Then there exists a stable bundle of degree $0$ on $\PP^2$ such that its restriction to $X$ is again stable with nonconnected Tannaka dual group. 
\end{lemma}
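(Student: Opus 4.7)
The plan is to produce a stable bundle $\shE$ of degree $0$ on $\PP^2$ whose restriction to $X$ contains a nontrivial finite line bundle in its Tannaka subcategory. Since $g(X)>1$, the Jacobian $J(X)$ is a positive-dimensional abelian variety, so $\Pic^0(X)[2]$ contains a nontrivial element $L$, which defines a connected \'etale double cover $\pi\colon Y \to X$ with Galois involution $\sigma$. One has $\pi^*L \cong \mathcal{O}_Y$.

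For any line bundle $M \in \Pic^0(Y)$ with $\sigma^*M \not\cong M$, the pushforward $\shF := \pi_*M$ is a rank-$2$ stable bundle of degree $0$ on $X$ (stability fails exactly when $M$ descends to $X$). The projection formula together with $\pi^*L \cong \mathcal{O}_Y$ gives
$$\shF \otimes L \cong \pi_*(M \otimes \pi^*L) \cong \pi_*M = \shF,$$
which corresponds to a nonzero global section of $\End(\shF)\otimes L$. Since both $L$ and $\End(\shF)$ are polystable of slope $0$ on $X$, this section exhibits $L$ as a direct summand of $\End(\shF)$. Hence $L$ lies in the Tannaka subcategory $\mathfrak{B}_{\shF}$, and because $L$ is a nontrivial finite polystable bundle, $G_\shF$ surjects onto $\mu_2 = \mathrm{Gal}(Y/X)$; in particular, $G_\shF$ is nonconnected.

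It remains to realize an $\shF$ of this type (or more generally, any stable bundle on $X$ satisfying $\shF \otimes L \cong \shF$) as the restriction of a stable bundle $\shE$ of degree $0$ on $\PP^2$. The strategy I would follow is moduli-theoretic: for $c_2 \gg 0$ the moduli space $M_{\PP^2}(2,0,c_2)$ of stable rank-$2$ bundles on $\PP^2$ with trivial determinant has dimension $4c_2-3$, which tends to infinity, while $M_X(2,0)$ has fixed dimension $4g(X)-3$, and the restriction map (defined on the open locus where Langer's Theorem~\ref{langerrestrictiontheorem} applies) is known to be dominant for large $c_2$. The sub-locus
$$\mathcal{Z}_L := \{\shF' \in M_X(2,0) : \shF'\otimes L \cong \shF'\}$$
is a nonempty closed subvariety of $M_X(2,0)$, since it contains the image of $\Pic^0(Y)\to M_X(2,0)$, $M\mapsto \pi_*M$. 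Dominance then supplies a stable bundle $\shE$ on $\PP^2$ with $\shE|_X \in \mathcal{Z}_L$, and Langer's theorem guarantees stability of $\shE|_X$.

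The hard part will be establishing the dominance of the restriction map strongly enough to hit $\mathcal{Z}_L$. To circumvent this one could give an explicit construction of $\shE$: for instance, produce $\shE$ via a sequence of elementary transformations along $X$ starting from a polystable bundle on $\PP^2$, tuning the kernel data so that $\shE$ is stable on $\PP^2$ and $\shE|_X\cong\pi_*M$; or use the Serre construction on $\PP^2$ with a zero-scheme on $X$ chosen so the resulting rank-$2$ bundle has the desired restriction property, and verify stability a posteriori with Algorithm~\ref{algorithmpn}.
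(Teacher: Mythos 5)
Your curve-side construction is sound and is a genuine alternative to the paper's: instead of your Prym-type bundle $\shF=\pi_*M$ with $\shF\otimes L\cong\shF$ (which does force $L$ to be a direct summand of the polystable bundle $\End(\shF)$, so that $G_{\shF}$ surjects onto $\ZZ/2\ZZ$ and is nonconnected), the paper takes the stable degree-$0$ bundle attached to an irreducible finite-image representation of $\pi_1(X,x)$ with trivial determinant, whose Tannaka dual group is that finite image. Either choice would do.

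The genuine gap is the extension to $\PP^2$, which is the real content of the lemma and which you leave open. Your moduli-theoretic route does not close even if one grants dominance of the restriction map $M_{\PP^2}(2,0,c_2)\to M_X(2,0)$ (itself unjustified): dominance only places a dense \emph{open} subset of $M_X(2,0)$ in the image, and such a subset can entirely miss $\mathcal{Z}_L$, which is a \emph{proper} closed subvariety (it is the image of $\Pic^0(Y)$ under $\pi_*$, of dimension $2g-1<4g-3=\dim M_X(2,0)$ for $g>1$). You would need surjectivity, or an argument that the image meets $\mathcal{Z}_L$; the fallback suggestions (elementary transformations, Serre construction) are not carried out. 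The paper closes this step with a short direct construction that you could graft onto your bundle once you normalize $\det\shF\cong\mathcal{O}_X$ (possible since the norm map $\Pic^0(Y)\to\Pic^0(X)$ is surjective): for $n\gg0$ the bundle $\shF^*(n)$ is generated by $r+1$ global sections, giving $0\to\mathcal{O}_X(n)\to\bigoplus_{i=1}^{r+1}\mathcal{O}_X(d_i)\to\shF^*(n)\to0$, where the kernel is a twist of $\mathcal{O}_X$ precisely because $\det\shF$ is trivial; the entries of the dual map lift from $X$ to $\PP^2$, and the kernel $\shE$ of the lifted map $\bigoplus_i\mathcal{O}_{\PP^2}(-d_i)\to\mathcal{O}_{\PP^2}(-n)$ is locally free (singularities in codimension $>2$), satisfies $\shE|_X\cong\shF$, and is stable on $\PP^2$ because its restriction to the ample divisor $X$ is stable. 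Without some such argument your proof is incomplete at its decisive step.
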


\begin{proof}Recall that a finite vector bundle is a vector bundle which is trivialized by a finite \'etale morphism.
 There is a one-to-one correspondence between finite vector bundles on $X$ and representations of the \'etale fundamental group $\pi_1(X,x)$ having finite image. Now choose such an irreducible representation of dimension $r$ with trivial determinant, which certainly exists if the genus of the curve is $>1$. It is well-known that the associated vector bundle $E$ is stable of degree $0$, with Tannaka dual group equal to the image of the representation. Further, its determinant is the trivial bundle. Since $E^*(n)$ is generated by $r+1$ global sections (\cite[Lemma 2.3]{brennertestexponent}), it is easy to see that one obtains a short exact sequence $$0 \longrightarrow \mathcal{O}_X(n) \stackrel{\varphi} \longrightarrow \bigoplus_{i = 1}^{r+1}\mathcal{O}_X(d_i) \longrightarrow E^*(n) \longrightarrow 0.$$ The dual of the morphism $\varphi$ lifts to an exact sequence $$0 \longrightarrow \shE \longrightarrow \bigoplus_{i = 1}^{r+1}\mathcal{O}_{\PP^2}(-d_i) \stackrel{\varphi^*} \longrightarrow \mathcal{O}_{\PP^2}(-n)$$ on $\PP^2$. In particular, $\shE|_X = E$ and the singularities of $\shE$ are of codimension $>2$, hence $\shE$ is locally free and of course stable of degree $0$. The Tannaka dual group of $\shE$ is connected by Lemma \ref{connectedlemma}, but the restriction of the bundle $\shE$ to $X$ has a finite Tannaka dual group.
\end{proof}

Hence we need a criterion for the Tannaka dual group to be connected after restricting the bundle $\shE$ to the smooth and connected curve $X$. For the rest of this section we denote by $p$ a prime number and by $\overline{\QQ}_p$ an algebraic closure of the p-adic numbers with ring of integers $\mathfrak{o}$ and residue field $\kappa = \overline{\FF}_p$. We call a finitely presented, flat and proper scheme $\mathfrak{X}$ over $\mathfrak{o}$ together with an isomorphism $X \cong \mathfrak{X} \otimes_{\mathfrak{o}} \overline{\QQ}_p$ a \emph{model} of $X$. Note that any scheme $\mathfrak{X}$ over $\mathfrak{o}$ is the disjoint union of the generic fiber $\mathfrak{X} \otimes \overline{\QQ}_p$, which is open in $\mathfrak{X}$, and the special fiber $\mathfrak{X} \otimes \overline{\FF}_p$, which is closed.  For the rest of this section set $K = \overline{\QQ}_p$.
 
\begin{theorem} \label{connectedtheorem}
 Let $E$ be a vector bundle on the smooth, connected and projective curve $X$ over $\overline{\QQ}_p$. If there exists a model $\mathfrak{X}$ of $X$ together with a vector bundle $\shE$ on $\mathfrak{X}$ such that $E \cong \shE \otimes_{\mathfrak{o}} \overline{\QQ}_p$ and $\shE \otimes_{\mathfrak{o}} \mathfrak{o}/p$ is a trivial bundle on the scheme $\mathfrak{X} \otimes_{\mathfrak{o}} \mathfrak{o}/p$, then $E$ is semistable of degree $0$ with connected Tannaka dual group $G_E$.
\end{theorem}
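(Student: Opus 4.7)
The plan is to split the statement into the numerical part (degree zero and semistability) and the structural part (connectedness of $G_E$). The numerical part would follow by standard flat-family semicontinuity, while for connectedness I would argue by contradiction, combining the Tannakian dictionary between finite quotients of $G_E$ and finite vector bundles with Grothendieck's specialization theorem for the \'etale fundamental group.

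For the numerical part, the first Chern class is locally constant in a flat proper family, so $\deg(E) = \deg(\shE \otimes_{\mathfrak{o}} \mathfrak{o}/p) = 0$ since the special fiber is trivial. For semistability I would apply the upper semi-continuity of $\mu_{\max}$ in flat families of pure sheaves (cf.\ \cite{huybrechtslehn}): the trivial special fiber satisfies $\mu_{\max}(\shE \otimes \mathfrak{o}/p) = 0$, whence $\mu_{\max}(E) \leq 0$, and together with the trivial lower bound $\mu_{\max}(E) \geq \mu(E) = 0$ we obtain $\mu_{\max}(E) = \mu(E) = 0$, so $E$ is semistable.

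For connectedness, suppose that $G_E$ is not connected. The nontrivial finite quotient $G_E / G_E^0$ yields via $\mathfrak{B}_E \simeq \mathbf{Rep}_{G_E}(K)$ a nontrivial finite vector bundle $F \in \mathfrak{B}_E$. By semisimplicity of $\mathfrak{B}_X$, such an $F$ appears as a direct summand of some tensor construction $T^{r,s}(E) = E^{\otimes r} \otimes (E^*)^{\otimes s}$, cut out by an idempotent endomorphism. Since $\shE \otimes \mathfrak{o}/p$ is trivial, so is $T^{r,s}(\shE) \otimes \mathfrak{o}/p$, and a base-change argument lifts the projector to produce a model $\shF$ of $F$ on $\mathfrak{X}$ whose reduction is a direct summand of a trivial bundle, hence itself trivial. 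In characteristic zero, a finite vector bundle corresponds to a representation of $\pi_1^{\mathrm{et}}(X)$ with finite image; Grothendieck's specialization theorem gives a surjection $\pi_1^{\mathrm{et}}(X) \twoheadrightarrow \pi_1^{\mathrm{et}}(\mathfrak{X} \otimes \mathfrak{o}/p)$ which is an equivalence on the prime-to-$p$ finite \'etale covers. Triviality of $\shF \otimes \mathfrak{o}/p$ forces the associated representation of $\pi_1^{\mathrm{et}}(\mathfrak{X} \otimes \mathfrak{o}/p)$ to be trivial, and by the equivalence the original representation on $X$ is trivial too; thus $F$ itself is trivial, contradicting the choice of $F$.

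The most delicate point in this plan is the last step. Two issues require care: first, that the idempotent defining $F$ over $\overline{\QQ}_p$ actually lifts to an integral idempotent over $\mathfrak{o}$, producing $\shF$ as an honest locally free $\mathcal{O}_{\mathfrak{X}}$-module with the claimed reduction (Hensel's lemma applied to the endomorphism algebra is the natural tool here); and second, that the finite monodromy of $F$ has order prime to $p$, where Grothendieck's specialization becomes an equivalence. The latter is the real subtlety in mixed characteristic and may require either a smoothness hypothesis on $\mathfrak{X}$ or a more refined $p$-adic argument; everything else reduces to routine semicontinuity and Tannakian formalism.
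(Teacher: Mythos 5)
The paper does not prove this theorem from scratch: it is a two-line citation of \cite[Theorem 13]{deningerwernerparallel} for the degree and semistability and of \cite[Theorem 3.12]{monodromygroups} for connectedness, both of which rest on the Deninger--Werner theory of $p$-adic parallel transport. Your plan tries to replace this input by flat-family semicontinuity plus specialization of $\pi_1$, and it breaks exactly where the $p$-adic input is needed. For the connectedness part there are two concrete failures. First, ``Hensel's lemma applied to the endomorphism algebra'' does not produce your model $\shF$: Hensel lifts idempotents from the special fibre up into a henselian or complete ring, whereas you need to descend an idempotent of $\End_X(T^{r,s}(E))$, which is only defined over the generic fibre, into the $\mathfrak{o}$-order $\End_{\mathfrak{X}}(T^{r,s}(\shE))$. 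An idempotent of a $\overline{\QQ}_p$-algebra need not lie in a prescribed $\mathfrak{o}$-order (compare $M_2(\QQ_p)\supset M_2(\ZZ_p)$), and if you instead take $\shF$ to be the saturation of $\mathrm{im}(e)$ inside $T^{r,s}(\shE)$ you only get a subsheaf, not a direct summand, of a trivial bundle on the reduction, so you lose the triviality you need. Second, your specialization step runs in the wrong direction: Grothendieck gives a surjection $\pi_1(X)\twoheadrightarrow\pi_1(\mathfrak{X}\otimes\kappa)$, so triviality of the special-fibre representation implies nothing about the generic-fibre one unless the latter factors through this surjection, which is precisely what would have to be proved. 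The map is injective only on prime-to-$p$ quotients and only for \emph{smooth} proper models, while $\mathfrak{X}$ here is merely flat, proper and finitely presented; and, as you concede, the monodromy of $F$ may have order divisible by $p$. That $p$-part is the whole content of the statement: $\pi_1$ of the characteristic-zero fibre has a far larger pro-$p$ quotient than that of the special fibre, so a nontrivial finite bundle with $p$-power monodromy could a priori have trivial reduction. Ruling this out is what the parallel-transport representation of the full $\pi_1(X)$ is for; there is no elementary substitute.

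The numerical part is also shakier than you present it. The base $\mathfrak{o}$ is not Noetherian, $\mathfrak{o}/p$ is non-reduced, and the special fibre $\mathfrak{X}\otimes\kappa$ need not be a smooth, reduced or irreducible curve, so the semicontinuity of $\mu_{\max}$ and the openness of semistability from \cite{huybrechtslehn} do not apply as stated. (Degree $0$ is fine: constancy of the Hilbert polynomial in the flat family plus Riemann--Roch on $X$ gives $\deg(E)=0$.) Deninger and Werner prove semistability by a genuine argument with the integral model --- essentially bounding $h^0$ of twists of bundles with trivial reduction --- not by quoting openness of semistability. So the proposal identifies the right skeleton but is missing the key idea at both of its load-bearing joints.
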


\begin{proof}
 The proof uses results from non-abelian $p$-adic Hodge theory. The semistability of the bundle is shown in \cite[Theorem 13]{deningerwernerparallel}, the assertion that $G_E$ is connected follows from \cite[Theorem 3.12]{monodromygroups}. 
\end{proof}

\begin{example} \label{exampleofrestriction}
 Let $\shE$ be a vector bundle on $\PP^N$ sitting in the short exact sequence $$0 \longrightarrow \shE \longrightarrow \bigoplus_{i = 1}^{N+1}\mathcal{O}_{\PP^N}(1) \stackrel{\varphi}{\longrightarrow} \mathcal{O}_{\PP^N}(N+1) \longrightarrow 0,$$ where the morphism $\varphi$ is defined by homogeneous polynomials $$f_i := X_0^{i-1}X_1^{N-i+1} + pg_i,\; i = 1,\dots, N+1,$$ with $g_i \in \mathfrak{o}[X_0,\dots,X_N]$. The vector bundle $\shE$ is stable of degree $0$ due to Theorem \ref{bohnhorstspindlerhdoneintro}. If we consider $\shE$ as a sheaf on $\PP^N_{\mathfrak{o}}$, it is easy to see that it is a locally-free sheaf outside the closed subset $\{[0;0;X_2;\dots;X_N]\}\subset \PP^N_{\kappa} \subset \PP^N_{\mathfrak{o}}$.  Then for every smooth connected curve $X \subset \PP^N$ which has a model $\mathfrak{X} \subset \PP^N_{\mathfrak{o}}$ such that the special fiber does not intersect the subspace $\{[0;0;X_2;\dots;X_N]\}$, the vector bundle $\shE|_{\mathfrak{X}}$ has $N$ linearly independent global sections modulo $p$ and hence is trivial. If the curve $X$ is the intersection of $N-1$ smooth divisors of degree $\gg 0$, Theorem \ref{langerrestrictiontheorem} yields that the restriction $E$ of $\shE$ to $X$ is a stable bundle, and it follows from Theorem \ref{connectedtheorem} above that $G_E$ is connected. See also \cite[Example 4.6 and Remark 4.8]{monodromygroups}.
\end{example}

\begin{remark}
 A similar argument was used by H. Brenner to provide examples for stable vector bundles on $p$-adic curves with semistable reduction, see \cite[Remark p. 571]{deningerwernerparallel}.
\end{remark}

In general, we cannot apply Theorem \ref{connectedtheorem} to an arbitrary kernel bundle $\shE$ on $\PP^N$, but in many cases one can apply it to the pull-back $\pi^*(\shE)$ with respect to a suitable chosen finite morphism $\pi: \PP^N \rightarrow \PP^N$ such that $\pi^*(\shE)$ is defined over $\mathfrak{o}$ and such that it is trivial modulo $p$ outside a closed subset of codimension $\geq 2$. Then for all curves $X$  which have a model $\mathfrak{X} \subset \PP^N_{\mathfrak{o}}$ whose special fiber does not intersect this closed subset, we have $G_{\pi^*(\shE)} = G_{\pi^*(\shE)|_X}$ if the curve $X$ is a complete intersection of smooth divisors of sufficiently high degree. We will illustrate this method in the following examples.    

\begin{example}
Consider the syzygy bundles $$ \shE_1 =  \Syz(X^4-Y^4,X^4-Z^4,X^2Y^2,X^2Z^2,Y^2Z^2)(5)$$ $$  \shE_2 = \Syz(X^6\!-Y^4Z^2,Y^6\!-X^2Z^4,X^4Y^2\!-Z^6,X^2Y^4,Y^2Z^4,X^4Z^2,X^2Y^2Z^2)(7)$$  on $\PP^2$ from Example \ref{exampleofstablebundle} and Example \ref{secondexample}. To find a suitable finite morphism $\pi_i: \PP^2 \rightarrow \PP^2$ as explained above, we try to construct a nontrivial morphism $g_i: \PP^1_{\FF_p} \rightarrow \PP^2_{\FF_p}$ such that the  pull-back $g_i^*(\shE_i \otimes \FF_p)$ is the trivial bundle. Then we can choose a rational map $\pi_i: \PP^2_{\ZZ_p} \dashrightarrow \PP^2_{\ZZ_p}$ that is defined outside the point $\{[0;0;1]\} \in \PP^2_{\FF_p} \subset \PP^2_{\ZZ_p}$  such that modulo $p$ there is the commutative diagram $$\xymatrix{\PP^2_{\FF_p}\setminus\{[0;0;1]\} \ar[d] \ar[r]^<<<<<{\pi_{i,\FF_p}} & \PP^2_{\FF_p} \\ \PP^1_{\FF_p} \ar[ur]_{g_i} & }$$ where the vertical morphism is defined as $[X;Y;Z] \mapsto [X;Y]$.  It is then clear that $\pi^*(\shE_i)$ is modulo $p$ the trivial bundle on the open subset $\PP^2_{\FF_p}\setminus \{[0;0;1]\}$. A computation with CoCoA shows that the morphism $g_i$ can for example be chosen as $[X;Y] \mapsto [X;Y;2X+Y]$ if the prime $p$ is odd. Then the rational map $\pi_i$ can be chosen as $[X;Y;Z] \mapsto [X;Y;2X+Y+pZ]$. The restriction to the generic fiber gives the finite morphism $\pi_i: \PP^2 \rightarrow \PP^2$ we were looking for. \medskip

On the generic fiber, the bundles $\pi_i^*(\shE_i)$ are polystable, and one computes $\dim(\mbox{End}(\pi_i^*(\shE_i))) = h^0(\PP^2,\pi_i^*(\shE_i) \otimes \pi_i^*(\shE_i)) = 1$ using Proposition \ref{exteriorpowerpresentations}. It follows that they have to be stable. Let $\mathfrak{X} \subset \PP^2_{\mathfrak{o}}$ be a model of a smooth connected curve $X \subset \PP^2_{\overline{\QQ}_p}$ such that the special fiber $\mathfrak{X}_{\overline{\FF}_p}$  does not contain the point $[0;0;1]$. If the degree of the plane curve $X$ is large enough, we may use Theorem \ref{langerrestrictiontheorem} and assume that the restriction of the bundle $\pi_i^*(\shE_i)$ is still a stable bundle. It follows from  Theorem \ref{connectedtheorem} that its Tannaka dual group is a connected semisimple group. Furthermore, we have $\Gamma(X,\pi_i^*(\shE_i)^{\otimes 4}|_X) = \Gamma(\PP^2_{\overline{\QQ}_p}, f^*(\shE_i)^{\otimes 4})$ for curves of sufficiently large degree. Hence the Tannaka dual groups satisfy $G_{\shE_1|_X} = \mathbf{Sp}_4 \subset \mathbf{GL}_4$ , $G_{\shE_2|_X}=\mathbf{Sp}_6 \subset \mathbf{GL}_6$.

\end{example}

\begin{example}
The syzygy bundle $$\shE = \Syz(X^3,Y^3,Z^3,XYZ)(4)$$ on $\PP^2$ is stable of degree $0$ due to Theorem \ref{monomialcase}, with Tannaka dual group $G_{\shE} = \mathbf{SL}_3 \subset \mathbf{GL}_3$ because of $\dim(\Gamma(\PP^2, \shE^{\otimes 3})) = 1$. The morphism $g: \PP^2_{\FF_p}\rightarrow \PP^2$ can be chosen as $[X;Y] \mapsto [X^2+Y^2;X^2;X^2+XY]$ and hence
 $\pi: \PP^2_{\ZZ_p} \dashrightarrow \PP^2_{\ZZ_p}$ for example as $[X;Y;Z] \mapsto [X^2+Y^2;X^2;X^2+XY + pZ^2]$. The same computations and arguments as above then show that $G_{\shE|_X} = \mathbf{SL}_3$ for a plane curve $X$ of sufficiently large degree as in the above example. 

\end{example}

It is natural to ask if this method works for all semistable vector bundles of degree $0$ on $\PP^N$:
\begin{question}
 Let $\shE$ be a semistable vector bundle of degree $0$ on $\PP^N$. Is there always a finite morphism $\pi: \PP^N \rightarrow \PP^N$ and a model $\mathfrak{P}$ of $\PP^N$ such that $\pi^*(\shE)$ lifts to a sheaf on $\mathfrak{P}$ which is modulo $p$ a free sheaf outside a closed subset of codimension $\geq 2$?  
\end{question}

\section{The stability of the syzygy bundle of five generic quadrics}
\label{stabilityofgenericquadrics}

It is an open question of whether for generic forms $f_1 \komdots f_n$ of degrees $d_1 \komdots d_n$ in the polynomial ring $R=K[X_0 \komdots X_N]$ over an algebraically closed field $K$ the corresponding syzygy bundle is semistable or even stable. There is no chance if the $d_i$'s do not satisfy the necessary degree condition of Proposition \ref{linebundlequotients}. Hence, the question only makes sense if the necessary condition on the degrees is fulfilled, e.g., if we consider forms of constant degree. Since semistability is an open property, it is enough to find a single $R_+$-primary family $g_1 \komdots g_n$ having the same degree configuration such that $\Syz(g_1 \komdots g_n)$ is semistable.

Via $R_+$-primary monomial families $f_i=X^{\sigma_i}$, $d_i=|\sigma_i|$, one can use Brenner's result \ref{monomialcase} to establish generic semistability in a combinatorial way by producing examples of monomial families with semistable syzygy bundle. This has been done recently in \cite[Theorem 4.6]{marquesmiroroig}, where P. Macias Marques and R. M. Mir\'{o}-Roig have proved the stability of the syzygy bundle $\Syzfn$ on $\PP^N$ for generic forms of degree $d$ with $N+1 \leq n \leq {d+N \choose N}$, $(N,d,n)\neq (2,2,5)$. This extends \cite[Theorem 3.5]{costamiroroigmarques} of L. Costa, R. M. Mir\'{o}-Roig and P. Macias Marques, where only the case $N=2$ has been proven. The general result of \cite{marquesmiroroig} has also been obtained simultaneously by I. Coand\u{a} in \cite{coandasyzygybundles}. For the case $N=2$, $n=5$ and $d=2$, where only semistability has been shown, Macias Marques asks the following question (see \cite[Problem 2.9]{marquesthesis}).

\begin{problem}[Macias Marques]
\label{questionmarques}
Is there a family of five quadratic homogeneous polynomials in $K[X_0,X_1,X_2]$ such that their syzygy bundle is stable?
\end{problem}

Note that one cannot establish generic semistability via a monomial example since for the only candidate we have
$$\mu(\Syz(X^2,Y^2,Z^2,XY,XZ)) = -\frac{5}{2}=\frac{1-6}{2} = \mu(\Syz(X^2,XY,XZ)).$$
We answer Macias Marques's question in the following proposition.

\begin{proposition}
\label{answertomarquesproblem}
The syzygy bundle $$\Syz(X^2-Y^2,X^2-Z^2,XY,XZ,YZ)$$ is stable on $\PP^2=\Proj K[X,Y,Z]$. Moreover, the syzygy bundle for five generic quadrics in $K[X,Y,Z]$ is stable on the projective plane.
\end{proposition}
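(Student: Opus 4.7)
The plan is to verify stability of $\shE := \Syz(X^2-Y^2, X^2-Z^2, XY, XZ, YZ)$ by combining Algorithm \ref{algorithmpn} with the self-duality strategy employed in Example \ref{exampleofstablebundle}; the generic statement then follows from openness of stability in the irreducible parameter space $(S^2 K^3)^5$ of $5$-tuples of quadrics.

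First, I would feed the presenting matrix of $\shE$ (which has rank $4$, degree $-10$ and slope $-5/2$) into Algorithm \ref{algorithmpn} and use CoCoA to check the Hoppe vanishings $h^0(\shE(k)) = 0$ for $k \leq 2$, $h^0((\bigwedge^2 \shE)(k)) = 0$ for $k \leq 4$, and $h^0((\bigwedge^3 \shE)(k)) = 0$ for $k \leq 7$. By Lemma \ref{exteriorpowercriterion}(1) this yields semistability. Since $\gcd(\rk(\shE), \deg(\shE)) = 2$, any proper subsheaf violating stability must have slope exactly $-5/2$ and hence integer degree $-5r/2$; the only admissible rank is $r = 2$. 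Thus it suffices to rule out a rank-$2$ subsheaf $\shF \subset \shE$ of degree $-5$, and passing to the reflexive hull we may assume $\shF$ is a rank-$2$ vector bundle on $\PP^2$ with $\det(\shF) = \mathcal{O}_{\PP^2}(-5)$.

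Second, I would exploit self-duality. The ideal $I = (X^2-Y^2, X^2-Z^2, XY, XZ, YZ)$ is the annihilator of $X^2+Y^2+Z^2$ in the Macaulay inverse system (cf.\ Example \ref{exampleofstablebundle}) and is therefore Gorenstein; the symmetry of its minimal free resolution combined with Lemma \ref{selfdual} yields $\shE \cong \shE^*(-5)$. On the other hand, any rank-$2$ bundle $\shF$ with $\det \shF = \mathcal{O}_{\PP^2}(-5)$ satisfies $\shF \cong \shF^* \otimes \det \shF \cong \shF^*(-5)$. Dualizing the inclusion $\shF \hookrightarrow \shE$, twisting by $\mathcal{O}_{\PP^2}(-5)$, and applying both self-duality isomorphisms produces a nonzero surjection $\shE \twoheadrightarrow \shF$; composing with the inclusion yields a nonzero endomorphism of $\shE$ of rank $2 < 4$, which therefore cannot be a scalar multiple of the identity. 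Consequently $h^0(\shE^* \otimes \shE) \geq 2$ whenever such an $\shF$ exists.

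The task therefore reduces to verifying by a direct CoCoA computation that $h^0(\shE \otimes \shE(5)) = h^0(\shE^* \otimes \shE) = 1$, using the presentation of tensor products given in Proposition \ref{exteriorpowerpresentations}. Together with the previous paragraph this rules out any rank-$2$ destabilizing $\shF$ and forces stability of $\shE$; the generic statement then follows from openness of stability in $(S^2 K^3)^5$. The main obstacle is the feasibility of the tensor-product computation, which however is in the same range as the one already performed in Example \ref{exampleofstablebundle}; conceptually the only delicate point is to confirm that the endomorphism built above is nonzero, and this is automatic since both the constructed surjection onto $\shF \neq 0$ and the inclusion $\shF \hookrightarrow \shE$ are nonzero.
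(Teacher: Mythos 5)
Your argument is correct, but it takes a genuinely different route from the paper's. The paper proves this proposition in two lines: the bundle $\Syz(X^4-Y^4,X^4-Z^4,X^2Y^2,X^2Z^2,Y^2Z^2)$ of Example \ref{exampleofstablebundle} is the pull-back of $\shE=\Syz(X^2-Y^2,X^2-Z^2,XY,XZ,YZ)$ under the finite squaring morphism $[X;Y;Z]\mapsto[X^2;Y^2;Z^2]$, and since stability descends along pull-back by a finite surjective morphism (a destabilizing subsheaf of $\shE$ would pull back to a destabilizing subsheaf upstairs), the stability already established in that Example gives the result with no new computation. You instead rerun the Example's method (Hoppe vanishings via Algorithm \ref{algorithmpn}, self-duality $\shE\cong\shE^*(-5)$ from the Gorenstein property via Lemma \ref{selfdual}, reduction of a putative destabilizer to a rank-$2$ bundle $\shF\cong\shF^*(-5)$, and simplicity $h^0(\shE^*\otimes\shE)=1$) directly on the quadric bundle. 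Your reduction to rank $2$ via the integrality of $\deg(\shF)=-5r/2$ is clean (and slightly tidier than the Example's route through the $\bigwedge^1$ and $\bigwedge^3$ vanishings), and the non-scalar endomorphism you build is indeed nonzero of generic rank $2$, so the simplicity computation does finish the proof. What your route buys is a self-contained argument with lower-degree forms, hence lighter Gr\"obner computations; what it costs is that the two CoCoA verifications you defer to (the Hoppe vanishings for $\shE$ and $h^0(\shE\otimes\shE(5))=1$) must actually be carried out, whereas the paper reuses computations it needs anyway for the Tannaka application in Section \ref{tannaka}. (Those verifications are guaranteed to succeed, since the paper's proof shows $\shE$ is stable, hence semistable and simple.) The concluding appeal to openness of stability over the irreducible parameter space of $5$-tuples of quadrics is the same in both.
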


\begin{proof}
The syzygy bundle $\shS = \Syz(X^4-Y^4,X^4-Z^4,X^2Y^2,X^2Z^2,Y^2Z^2)$, which we have considered in Example \ref{exampleofstablebundle}, is the pull-back of $\shE:=\Syz(X^2-Y^2,X^2-Z^2,XY,XZ,YZ)$ under the finite morphism
$$\PP^2 \lto \PP^2,~X \longmapsto X^2,~ Y \longmapsto Y^2,~ Z \longmapsto Z^2.$$ Since $\shS$ is a stable bundle, so is $\shE$. The supplement follows from the openness of stability.
\end{proof}

\begin{remark}
In the recent preprint \cite{coandasyzygybundles} I. Coand\u{a} has independently proved in [ibid., Example 1.3] the stability of the generic syzygy bundle for $(N,d,n)=(2,2,5)$. But his proof is more complicated and does not provide an explicit example of a family of five homogeneous quadrics in three variables.
\end{remark}

Let $\shM_{\PP^N}(n-1,c_1,\dots, c_N)$ be the moduli space of stable vector bundles of rank $n-1$ and Chern classes $c_1,\dots, c_N$ on the projective space $\PP^N$. Denote by $\shS_{(N,n,d)} \subset \shM_{\PP^N}(n-1,c_1,\dots, c_N)$ the stratum of stable syzygy bundles $\shE$ defined by the short exact sequence $$0 \lto \shE \lto \bigoplus_{i=1}^n \mathcal{O}_{\PP^N}(-d) \lto \mathcal{O}_{\PP^N} \lto 0.$$ In his thesis \cite{marquesthesis}, M. Marques computes the dimension of the syzygy stratum and the codimension of its closure in the irreducible component of the moduli space. However, he could not give an answer for the case $N = 2, d = 2, n = 5$ due to the lack of a stable syzygy bundle of five homogeneous quadrics. Our example also closes this gap. 
We recall that the moduli spaces of stable bundles on $\PP^2$ with fixed invariants are irreducible and their dimensions are known, see for example \cite{moduliirreducible}.
 
\begin{corollary}
The syzygy stratum $\shS_{(2,5,2)} \subset \shM_{\PP^2}(4,-10,40)$ has dimension $5$. In particular, $\overline{\shS}_{(2,5,2)} = \shM_{\PP^2}(4,-10,40)$.
\end{corollary}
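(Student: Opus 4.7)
The plan is essentially a dimension count, once the existence statement of Proposition \ref{answertomarquesproblem} is available; the truly substantive step --- producing an explicit stable syzygy bundle of five homogeneous quadrics --- has already been carried out there. The argument proceeds in three steps.

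First I would compute $\dim \shS_{(2,5,2)}$. Let $Q$ denote the $6$-dimensional space of homogeneous quadrics in $K[X,Y,Z]$ and consider the parameter space $Q^{\oplus 5}$ of ordered $5$-tuples of quadrics, of dimension $30$. The subset $U \subseteq Q^{\oplus 5}$ of tuples generating an $R_+$-primary ideal whose associated syzygy bundle is stable is Zariski-open (both conditions are open), and non-empty by Proposition \ref{answertomarquesproblem}. The presentation-to-bundle map $U \to \shS_{(2,5,2)}$ is surjective, and two tuples yield isomorphic syzygy bundles if and only if they lie in the same orbit of the natural action of $GL_5(K) \times K^*$ (coming from $\operatorname{Aut}(\bigoplus_{i=1}^5 \mathcal{O}_{\PP^2}(-2)) \times \operatorname{Aut}(\mathcal{O}_{\PP^2})$), given by
$$(M,\lambda)\cdot(f_1,\ldots,f_5) = \bigl(\lambda \textstyle\sum_j M_{ij} f_j\bigr)_{i=1,\ldots,5}.$$
For linearly independent $f_i$ the relation $\lambda M (f_j) = (f_i)$ forces $\lambda M = I$, so the stabilizer is the $1$-dimensional diagonal subgroup $\{(\lambda^{-1}I,\lambda) : \lambda \in K^*\}$ and the orbits in $U$ have dimension $25 + 1 - 1 = 25$. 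Hence
$$\dim \shS_{(2,5,2)} = 30 - 25 = 5.$$
This parameter count is essentially already carried out in Marques's thesis \cite{marquesthesis}; what was missing there and is now supplied is precisely the non-emptiness of $U$.

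Next I would compute the dimension of the ambient moduli space via the standard formula $\dim \shM_{\PP^2}(r,c_1,c_2) = \Delta(\shE) - (r^2 - 1)$, where $\Delta(\shE) = 2rc_2 - (r-1)c_1^2$ is the discriminant appearing in Section~\ref{sectionrestrictiontheorems}. For $(r,c_1,c_2) = (4,-10,40)$ one computes $\Delta = 320 - 300 = 20$, hence $\dim \shM_{\PP^2}(4,-10,40) = 20 - 15 = 5$.

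Finally, since $\shM_{\PP^2}(4,-10,40)$ is irreducible by \cite{moduliirreducible} and $\shS_{(2,5,2)}$ is a non-empty locally closed subvariety of the same dimension $5$, its closure $\overline{\shS}_{(2,5,2)}$ must coincide with the entire moduli space. The only genuine obstacle in the whole argument --- the existence of a stable syzygy bundle in this stratum --- is exactly what Proposition \ref{answertomarquesproblem} supplies, so the remainder is bookkeeping.
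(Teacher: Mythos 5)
Your proposal is correct and is essentially the paper's argument: the paper simply points to \cite[Proposition 4.2 and Theorem 4.3]{marquesthesis} and notes that the argument there goes through once Proposition \ref{answertomarquesproblem} supplies a stable bundle in the stratum, and your dimension count ($30-25=5$ for the stratum, $\Delta-(r^2-1)=20-15=5$ for the moduli space, then irreducibility via \cite{moduliirreducible}) is precisely the computation being cited. All the numerics check out ($c_1=-10$, $c_2=40$, $\Delta=20$), so this is a faithful unpacking of the reference rather than a different route.
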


\begin{proof}
See \cite[Proposition 4.2 and Theorem 4.3]{marquesthesis}, where the proof presented there works analogously for the case
$N=2$, $n=5$, $d=2$ due to Proposition \ref{answertomarquesproblem}.
\end{proof}

\section{Computing inclusion bounds for tight closure and solid closure}
\label{computingtightclosure}

Our semistability algorithm also has impact on certain ideal closure operations in commutative algebra due to a geometric interpretation by H. Brenner. 
We recall briefly the notions of \emph{tight closure} and \emph{solid closure}, where we restrict ourselves
to the case where the ring $R$ under consideration is a Noetherian integral domain. For a detailed exposition of these closure operations and their backround see \cite{hunekeapplication}. 

Let $I=(f_1 \komdots f_n) \subseteq R$ be an ideal and $f \in R$. The $R$-algebra $A=R[T_1 \komdots T_n]/(f_1 T_1 + \ldots + f_n T_n+f)$ is called the \emph{forcing algebra} for the elements $f_1 \komdots f_n,f \in R$. The element $f$
belongs to the \emph{solid closure}, which we denote by $I^\star$, if and only if the following holds: For every maximal ideal $\fom$ of $R$ the top-dimensional local cohomology module $H_{\fom^\prime}^d(A^\prime)$ does not vanish, where $A^\prime$ is the forcing algebra for the given data over the local complete domain $R^\prime:=\hat{R}_\fom$ and $d=\dim(R^\prime)=\height(\fom)$.

Now assume that $R$ is of positive characteristic $p$ (i.e., $R$ contains a field of positive characteristic). Then the \emph{tight closure} of $I$ is defined as the ideal
$$I^*:=\{f \in R: \mbox{ there exists } 0 \neq t \in R \mbox{ such that } tf^q \in I^{[q]} \mbox{ for all } q=p^e\},$$
where $I^{[q]} = (f_1^q \komdots f_n^q)$ denotes the extended ideal under the $e$-th iteration of the Frobenius $F: R \ra R$, $f \mapsto f^p$.

An important fact due to M. Hochster is that $I^*=I^\star$ holds in positive characteristic for a normal $K$-algebra $R$ of finite type (in fact this is true under
weaker assumptions); see \cite[Theorem 8.6]{hochstersolid}.

It follows already from the definitions of these closure operations that they are hard to compute. For a normal standard graded integral $2$-dimensional algebra $R$ over an algebraically closed field $K$ there is a well-developed theory by H. Brenner for solid closure and tight closure which connects these notions with (strong) semistability of the corresponding syzygy bundle $\Syz(f_1 \komdots f_n)$ on the smooth projective curve $C = \Proj R$; see \cite{brennerbarcelona} for an excellent survey. This geometric approach combined with the  restriction theorems presented in Section \ref{sectionrestrictiontheorems} enables us to use our semistability Algorithm \ref{algorithmpn} to compute inclusion bounds for solid closure and tight closure in homogeneous coordinate rings of smooth projective curves, particularly plane curves, of sufficiently large degree.

In characteristic $0$ there is the following result for solid closure.

\begin{theorem}[Brenner]
\label{inclusionsolidclosure}
Let $K$ be an algebraically closed field of characteristic zero and $R$ be a normal standard graded $K$-domain of dimension two. Further let $I=(f_1 \komdots f_n)$ be an $R_+$-primary homogeneous ideal. If $\Syz(f_1 \komdots f_n)$ is semistable on $C = \Proj R$ then $$I^\star = I + R_{\frac{d_1 + \ldots + d_n}{n-1}},$$ where $d_i = \deg(f_i)$ for $i = 1 \komdots n$.
\end{theorem}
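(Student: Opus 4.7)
The plan is to translate the membership question $f \in I^\star$ into a cohomology/affineness question on the curve $C = \Proj R$ via H. Brenner's geometric interpretation of solid closure, and then to exploit the semistability hypothesis through the dichotomy between semistability and ampleness of vector bundles on smooth projective curves in characteristic $0$.

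First I would consider the presenting sequence on $C$
$$0 \lto \Syz(f_1 \komdots f_n) \lto \bigoplus_{i=1}^n \mathcal{O}_C(-d_i) \lto \mathcal{O}_C \lto 0,$$
twist it by $\mathcal{O}_C(m)$, and extract from the long exact cohomology sequence the connecting homomorphism
$$\delta_m : R_m = \Gamma(C, \mathcal{O}_C(m)) \lto H^1(C, \Syz(f_1 \komdots f_n)(m)),$$
where the identification $R_m = \Gamma(C, \mathcal{O}_C(m))$ in positive degrees uses the normality of the $2$-dimensional domain $R$. Brenner's main theorem in this setting then asserts that for homogeneous $f \in R_m$, one has $f \in I^\star$ if and only if the torsor over $C$ classified by $\delta_m(f)$ is not affine; this is the geometric reformulation that drives the rest of the argument.

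The second ingredient is the following dichotomy for a semistable locally free sheaf $\mathcal{F}$ on a smooth projective curve in characteristic $0$: if $\mu(\mathcal{F}) \geq 0$, then the torsor associated to every class in $H^1(C, \mathcal{F})$ is non-affine; if $\mu(\mathcal{F}) < 0$, then $\mathcal{F}^*$ is ample (by semistability together with Hartshorne's criterion), and the torsor of every non-zero class is affine. For the twisted syzygy bundle $\mathcal{F} = \Syz(f_1 \komdots f_n)(m)$, which is semistable by hypothesis, the slope has the same sign as $m - s$ with $s := (d_1 + \ldots + d_n)/(n-1)$. The two inclusions of the theorem now follow: for $m \geq s$ every $f \in R_m$ produces a non-affine torsor, so $R_{\geq s} \subseteq I^\star$; for $m < s$ an element $f \in I^\star$ must satisfy $\delta_m(f) = 0$, so it lifts to some $(g_1 \komdots g_n) \in \bigoplus_i R_{m-d_i}$ and is therefore equal to $\sum f_i g_i \in I_m$. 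Combining with the trivial inclusion $I \subseteq I^\star$ gives the claimed equality.

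The hard part will be the affineness dichotomy itself: the regime $\mu(\mathcal{F}) \geq 0$ requires not only that the trivial torsor $\mathbb{V}(\mathcal{F}^*) \to C$ is non-affine (easy, since $C$ embeds as zero section), but that \emph{no} non-trivial extension $0 \to \mathcal{F} \to \mathcal{F}' \to \mathcal{O}_C \to 0$ yields an affine complement of the distinguished section, which rests on the absence of ample quotients of a semistable bundle of non-negative slope. The regime $\mu(\mathcal{F}) < 0$ is the affineness of torsors under ample bundles on curves. Assembling these two facts precisely, verifying that they combine with Brenner's forcing-algebra interpretation of $I^\star$ in dimension $2$, and handling the boundary case $m = s$ correctly are the substantive technical points where both the semistability hypothesis and the characteristic $0$ assumption genuinely enter.
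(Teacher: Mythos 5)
Your outline is correct and coincides with the argument the paper relies on: the paper's proof is simply a citation to the characteristic-zero version of Brenner's Theorem 6.4 in \cite{brennerbarcelona}, whose proof is exactly the torsor/affineness dichotomy for the connecting homomorphism $\delta_m$ that you describe, with the slope of $\Syz(f_1 \komdots f_n)(m)$ changing sign at $m = (d_1 + \ldots + d_n)/(n-1)$. Your identification of the substantive points (the forcing-algebra interpretation of $I^\star$ in dimension two, and the affineness of torsors under semistable bundles of negative degree versus non-affineness for non-negative degree) matches the content of the cited result.
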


\begin{proof}
See the characteristic zero version of \cite[Theorem 6.4]{brennerbarcelona}.
\end{proof}

So Theorem \ref{inclusionsolidclosure} gives for an element $f \in R_m$ an inclusion  $f \in I^\star$ for $m \geq  \frac{d_1 + \ldots + d_n}{n-1}$ and for $m < \frac{d_1 + \ldots + d_n}{n-1}$ the question of whether $f$ belongs to $I^\star$ reduces to an ideal membership test which is a well-known 
procedure in computational algebra (cf. for instance \cite[Proposition 2.4.10]{kreuzerrobbiano}).

The following theorem works in positive characteristic under the assumption that the syzygy bundle is strongly semistable. We recall that a vector bundle $\shE$ is \emph{strongly semistable} if for every $e \geq 0$ the Frobenius pull-backs ${F^e}^*(\shE)$ are semistable. So our
algorithmic methods for tight closure can either be applied to homogeneous coordinate rings of the general curve of large degree (via Theorem \ref{langerstronglysemistablerestriction}) or to coordinate rings of elliptic curves (there semistable bundles are strongly semistable by \cite[Theorem 2.1]{mehtaramanathanhomogeneous}).

\begin{theorem}[Brenner]
\label{inclusiontightclosure}
Let $K$ be an algebraically closed field of characteristic $p>0$ and $R$ be a normal standard graded $K$-domain of dimension two. Further let $I=(f_1 \komdots f_n)$ be an $R_+$-primary homogeneous ideal such that $\Syz(f_1 \komdots f_n)$ is strongly semistable
on $C = \Proj R$. Denote the genus of $C$ by $g$. Then the following hold.
\begin{enumerate}
\item If $m \geq \frac{d_1 + \ldots + d_n}{n-1}$ then $R_m \subseteq I^*$.
\item If $m < \frac{d_1 + \ldots + d_n}{n-1}$ and $f \in R_m$ then $f \in I^*$ if and only if
\begin{enumerate}
\item $f^p \in I^{[p]}=(f_1^p \komdots f_n^p)$ if $p > 4(g-1)(n-1)^3$
\item or $f^q \in  I^{[q]}=(f_1^q \komdots f_n^q)$ for $q=p^e>6g$ if $p < 4(g-1)(n-1)^3$.
\end{enumerate}
\end{enumerate}
\end{theorem}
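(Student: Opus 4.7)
The plan is to combine Brenner's geometric dictionary between tight closure and cohomology on the curve $C = \Proj R$ with the hypothesis that $\shS := \Syz(f_1 \komdots f_n)$ is strongly semistable, in order to reduce the ``$\forall q=p^e$'' quantifier in the definition of $I^*$ to a check at an explicit single Frobenius power. First I would set up the cohomological reformulation: twisting the defining sequence of $\shS$ by $\mathcal{O}_C(m)$ and applying $\Gamma(C,-)$, the connecting homomorphism sends $f \in R_m$ to a class $c_f \in H^1(C, \shS(m))$. Then $f \in (f_1 \komdots f_n)$ iff $c_f = 0$, and by Brenner's result $f \in I^*$ iff the Frobenius pullbacks $(F^e)^*(c_f) \in H^1(C,(F^e)^*\shS(mp^e))$ can be simultaneously annihilated by one nonzero test element $t \in R$; note that $f^q \in I^{[q]}$ is precisely the unconditional vanishing $(F^e)^*(c_f)=0$ (see \cite{brennerbarcelona}). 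The slope of $\shS(m)$ equals $(m(n-1)-\sum d_i)\deg\mathcal{O}_C(1)/(n-1)$, so part (1)'s hypothesis is exactly $\mu(\shS(m)) \geq 0$.

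For part (1) I would argue that under strong semistability the pullbacks $(F^e)^*\shS(m)$ stay semistable of slope $p^e \mu(\shS(m)) \geq 0$, and once $e$ is large enough this slope dominates $2g-2+\deg(L)$ for any fixed ample line bundle $L$. Semistability plus Serre duality then yield $H^1(C,(F^e)^*\shS(m)\otimes L^{-1})=0$, so multiplication by a global section of $L$ annihilates $(F^e)^*(c_f)$ and produces a uniform test element. This gives $R_m \subseteq I^*$. (The borderline case $\mu(\shS(m))=0$ is handled separately by Lange--Stuhler-type arguments showing that strongly semistable degree-zero bundles are trivialised by a finite cover.)

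For part (2), where $\mu(\shS(m))<0$, one instead exploits the negative slope to compare $p^e|\mu(\shS(m))|$ with $2g-2$ and with the Langer discriminant bound $\frac{r-1}{r}\Delta(\shS)+\frac{1}{r(r-1)}$ from Theorem \ref{langerrestrictiontheorem}. The point is that once the Frobenius power is large enough relative to $g$, any test-element factor needed to witness $f\in I^*$ can be absorbed into the cohomological vanishing, so that $(F^e)^*(c_f)=0$ unconditionally becomes equivalent to $f \in I^*$. Tracking this absorption through the explicit numerology of Langer's bound yields the threshold $p>4(g-1)(n-1)^3$ at which one single Frobenius step $F^*$ suffices, giving (2a); when $p$ falls below this threshold one iterates Frobenius and uses Riemann--Roch vanishing past $q>6g$ to conclude (2b).

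The hard part will be pinning down the explicit characteristic threshold $p > 4(g-1)(n-1)^3$. Verifying that a \emph{single} application of $F^*$ already detects membership in $I^*$, rather than requiring arbitrarily many iterations, is the technically delicate step: it requires marrying Brenner's cohomological criterion with Langer's discriminant estimate while keeping track of explicit constants, and the cubic growth $(n-1)^3$ in the bound is exactly the cost of invoking a rank-dependent restriction-type estimate instead of a sharper curve-specific one. Cleanly separating the two regimes in (2a) and (2b) — and checking that the bound $q>6g$ is the right ``Riemann--Roch crossover" for the syzygy twist under consideration — forms the remaining bookkeeping hurdle.
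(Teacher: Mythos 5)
The paper itself offers no argument here: the proof is a pointer to \cite[Theorem 6.4]{brennerbarcelona}, so your sketch must stand on its own. For part (1) you have reconstructed the right mechanism: the class $c_f \in H^1(C,\shS(m))$, strong semistability of the Frobenius pull-backs, and a test element obtained from an $H^1$-vanishing via Serre duality. Two corrections, though. You want $H^1(C,(F^e)^*\shS(m)\otimes L)=0$ for a line bundle $L$ of degree $>2g-2$ (not $\otimes L^{-1}$), so that multiplication by a fixed $t\in\Gamma(C,L)$ kills every class uniformly in $e$; with that twist the Serre dual is semistable of negative slope for \emph{every} $e$ as soon as $\mu((F^e)^*\shS(m))\geq 0$, so the borderline case $\mu(\shS(m))=0$ is already covered and your appeal to a Lange--Stuhler trivialization is both unnecessary and unavailable (étale trivializability of strongly semistable degree-zero bundles is a phenomenon over $\overline{\FF}_p$, not over an arbitrary algebraically closed field of characteristic $p$).

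Part (2) has a genuine gap. First, the heart of the statement is the implication $f\in I^*\Rightarrow (F^e)^*(c_f)=0$ for some $e$ when $\deg(\shS(m))<0$, i.e.\ that for a strongly semistable bundle of negative degree the tight closure of zero in $H^1$ collapses to the Frobenius closure of zero. Your phrase that the test element ``can be absorbed into the cohomological vanishing'' does not produce this: a test element $t$ only moves $(F^e)^*(c_f)$ into a \emph{different} cohomology group $H^1((F^e)^*\shS(mq)\otimes L)$, and annihilation there says nothing about the vanishing of $(F^e)^*(c_f)$ itself; one needs the genuinely harder argument (as in \cite{brennertestexponent} and \cite{brennerbarcelona}) that for $q\deg(\shS(m))$ sufficiently negative the relevant maps are controlled by the Harder--Narasimhan data of the Frobenius pull-backs. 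Second, the source of the threshold $p>4(g-1)(n-1)^3$ is misidentified: Langer's restriction theorem \ref{langerrestrictiontheorem} and its discriminant bound $\frac{r-1}{r}\Delta(\shE)+\frac{1}{r(r-1)}$ govern restriction from $\PP^N$ to hypersurfaces and play no role on the curve $C$. The correct quantitative input is the bound on how far a single Frobenius pull-back can destabilize a bundle on a curve (estimates of Shepherd-Barron/Sun/Langer of the shape $\mu_{\max}(F^*\shE)-\mu_{\min}(F^*\shE)\leq \frac{2g-2}{p}(r-1)$ with $r=n-1$), which is what produces the cubic dependence on $n-1$, separates the regimes (2a) and (2b), and yields the crossover $q>6g$. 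Without that ingredient the ``bookkeeping hurdle'' you defer cannot be carried out.
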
   

\begin{proof}
See the positive characteristic version of \cite[Theorem 6.4]{brennerbarcelona}.
\end{proof}

\begin{remark}
Let $\shC=\Proj R \ra \Spec \ZZ$ be a generically smooth projective relative curve and $I:=(f_1 \komdots f_n)$ be an $R_+$-primary ideal. In this situation one can deduce tight closure information of the reductions $I_p$ in the fiber rings $R_p:=R \otimes_{\ZZ} \FF_p$ from semistability in characteristic $0$. Let $\shS:=\Syz(f_1 \komdots f_n)$ denote the syzygy bundle on the total space $\shC$. If $\shS_0:=\shS|_{\shC_0}$ is semistable on the generic fiber $\shC_0:=\shC \times_{\Spec \ZZ} \Spec \QQ$
and $m > \frac{d_1 + \ldots + d_n}{n-1}$ then $\shS_0(m)$ has positive degree and is therefore ample (see \cite[Theorem 2.4]{hartshorneamplecurve}). Since ampleness is an open property, the reductions to positive characteristic $\shS_p:=\shS|_{\shC_p}$ and $(\shS_p(m))^*$ are also ample on the special fibers $\shC_p:=\shC \times_{\Spec \ZZ} \Spec \FF_p$ for almost all prime numbers $p \in \ZZ$. In this situation Brenner's geometric approach also yields results on the tight closure of $I_p \subseteq R_p$ for $p \gg 0$; see \cite[Section 4]{brennerbarcelona} for a detailed treatment of ampleness criteria for tight closure. In particular, by [ibid., Proposition 4.17] we have $(R_p)_m \subseteq I_p^*$ (in fact $(R_p)_m$ already belongs to the \emph{Frobenius closure} $I_p^F:=\{f \in R_p: f^q \in I_p^{[q]} \mbox{ for some } q=p^e\} \subseteq I_p^*$).
\end{remark}

\begin{remark}
What can be said in higher dimensions? As usual, we consider $R_+$-primary homogeneous polynomials $f_1 \komdots f_n$ in $P=K[X_0 \komdots X_N]$. We can compute a minimal graded free resolution $\mathfrak{F}_\bullet$ of the ideal $(f_1 \komdots f_n)$; see \cite[Section 4.8.B]{kreuzerrobbiano2} for the computational backround. Since the quotient $R=P/I$ is Artinian, the length of $\mathfrak{F}_\bullet$ equals $N+1$ by the Auslander-Buchsbaum formula. Consequently, the corresponding resolution of the associated sheaves on $\PP^N$ gives a resolution
$$\mathfrak{F}_\bullet: 0 \lto \shF_{N+1} \lto \shF_{N} \lto \ldots \lto \shF_1 \lto \mathcal{O}_{\PP^N} \lto 0$$
of the structure sheaf with splitting bundles $\shF_i$, $i=1 \komdots N+1$. Instead of looking at $\Syzfn = \ker (\shF_1 \ra \mathcal{O}_{\PP^N})$, we consider the bundle $$\Syz_{N-1}:=\Syz_{N-1}(f_1\komdots f_n) := \im(\shF_{N+1} \lto \shF_{N}).$$
To check whether $\Syz_{N-1}$ is semistable, we can apply Algorithm \ref{algorithmpn} to its dual $(\Syz_{N-1})^*$ which is a kernel bundle. If the answer is positive, then we obtain an inclusion bound for the tight closure $(f_1 \komdots f_n)^*$ in the homogeneous coordinate ring $R$ of a generic hyperplane $X \subset \PP^N$ of sufficiently large degree. This works as follows. If we restrict the resolution $\mathfrak{F}_\bullet$ to $X$, we obtain an exact complex of sheaves on $X$ and the sheaf $\Syz_{N-1}|_X$ is strongly semistable for $k= \deg(X) \gg 0$ by Theorem \ref{langerstronglysemistablerestriction}. Then Brenner's result \cite[Theorem 2.4]{brennerlinearfrobenius} gives the inclusion bound $R_{\geq \nu} \subseteq (f_1 \komdots f_n)^*$, where $\nu:=-\frac{\mu(\Syz_{N-1}|_X)}{\deg(X)}$. Note that we can compute all necessary invariants (rank, degree, discriminant) of $\Syz_{N-1}$ from the resolution $\mathfrak{F}_\bullet$.

We obtain the same inclusion bound if we restrict $\Syz_{N-1}$ to smooth hypersurfaces $X \subset \PP^N$ for which every semistable bundle on $X$ is strongly semistable; compare for instance Lemma \ref{semistabilitytensorposchar} and Example \ref{calabiyauexample}. Here the degree bound for $\deg(X)$, which ensures the semistability of $\Syz_{N-1}|_X$, is given by Theorem \ref{langerrestrictiontheorem}.
\end{remark}

\bibliographystyle{amsplain}

\begin{thebibliography}{10}

\bibitem{douhcompact}
V.~Balaji, \emph{Principal bundles on projective varieties and the
  {D}onaldson-{U}hlenbeck compactification}, J. Differential Geom. \textbf{76}
  (2007), no.~3, 351--398. 

\bibitem{adddouhcompact}
\bysame, \emph{Addendum to ``{P}rincipal bundles on projective varieties and
  the {D}onaldson-{U}hlenbeck compactification''}, J. Differential
  Geom. \textbf{83} (2009), no.~2, 461--463. 

\bibitem{bohnhorstspindler}
G.~Bohnhorst and H.~Spindler, \emph{The stability of certain vector bundles on
  {${\PP}^n$}}, Complex algebraic varieties, Lect. Notes Math., vol. 1507,
  1992, pp.~39--50.

\bibitem{brennerlinearfrobenius}
H.~Brenner, \emph{A linear bound for {F}robenius powers and an inclusion bound
  for tight closure}, Mich. Math. J. \textbf{53} (2005), no.~3, 585--596.

\bibitem{brennerstronglysemistable}
\bysame, \emph{There is no {B}ogomolov type restriction theorem for strong
  semistability in positive characteristic}, Proc. Amer. Math. Soc.
  \textbf{133} (2005), no.~7, 1941--1947.

\bibitem{brennertestexponent}
\bysame, \emph{Bounds for test exponents}, Compositio Math. \textbf{142}
  (2006), 451--463.

\bibitem{brennerlookingstable}
\bysame, \emph{Looking out for stable syzygy bundles}, Adv. Math. \textbf{219}
  (2008), no.~2, 401--427.

\bibitem{brennerbarcelona}
\bysame, \emph{Tight closure and vector bundles}, Three Lectures on Commutative
  Algebra (J.~Elias S.~Zarzuela G.~Colom\'{e}-Nin, T. Cortodellas~Benitez,
  ed.), University Lecture Series, vol.~42, AMS, 2008, pp.~1--71.

\bibitem{buchsbaumeisenbud}
D.~Buchsbaum and D.~Eisenbud, \emph{Algebra structures for finite free
  resolutions, and some structure theorems for ideals of codimension $3$},
  Amer. J. of Math. \textbf{99} (1977), 447--485.

\bibitem{CartanEilenberg}
H.~Cartan and S.~Eilenberg, \emph{Homological algebra}, Princeton Landmarks in
  Mathematics, Princeton University Press, Princeton, NJ, 1999, With an
  appendix by David A. Buchsbaum, Reprint of the 1956 original.

\bibitem{coandasyzygybundles}
I.~Coand\u{a}, \emph{On the stability of syzygy bundles}, Preprint (2009),
  arXiv:0909.4435.

\bibitem{CocoaSystem}
{CoCoA}Team, \emph{{{\hbox{\rm C\kern-.13em o\kern-.07em C\kern-.13em
  o\kern-.15em A}}}: a system for doing {C}omputations in {C}ommutative
  {A}lgebra}, Available at \/ {\tt http://cocoa.dima.unige.it}.

\bibitem{costamiroroigmarques}
L.~Costa, P.~Macias Marques, and R.~M. Mir\'{o}-Roig, \emph{Stability and
  unobstructedness of syzygy bundles}, J. Pure Appl. Algebra \textbf{214} (2010), 1241--1262.

\bibitem{delignemilne}
P.~Deligne, J.~S. Milne, A.~Ogus, and K.~y.~Shih, \emph{Hodge cycles, motives,
  and {S}himura varieties}, Lecture Notes in Mathematics, vol. 900,
  Springer-Verlag, Berlin, 1982.

\bibitem{deningerwernerparallel}
C.~Deninger and A.~Werner, \emph{Vector bundles on $p$-adic curves and parallel
  transport}, Ann. Scient. \'{E}c. Norm. Sup. \textbf{38} (2005), 535--597.

\bibitem{eisenbud}
D.~Eisenbud, \emph{Commutative algebra with a view toward algebraic geometry},
  Springer-Verlag, 1995.

\bibitem{moduliirreducible}
G.~Ellingsrud, \emph{Sur l'irr\'eductibilit\'e du module des fibr\'es stables
  sur {${\bf P}^{2}$}}, Math. Z. \textbf{182} (1983), no.~2, 189--192.
  

\bibitem{flennerrestriction}
H.~Flenner, \emph{Restrictions of semistable bundles on projective varieties},
  Comment. Math. Helv. \textbf{59} (1984), 635--650.

\bibitem{LiE}
Centre for Mathematics and Computer~Science in~Amsterdam, \emph{Lie: A computer
  algebra package for lie group computations}, Available at \:\:\:\:\:\:\:\:\:\:\:\:\:\:\:\:\:\;\;\;\:\:\:\:\:\:\:\:\:\;\;\:\:\:\:\:\:\:\:\:\:\;\;\;\:\:\/ {\tt
  http://young.sp2mi.univ-poitiers.fr/~marc/LiE/}.

\bibitem{hartshorneamplesubvarieties}
R.~Hartshorne, \emph{Ample subvarieties of algebraic varieties}, Springer,
  Berlin Heidelberg New York, 1970.

\bibitem{hartshorneamplecurve}
\bysame, \emph{Ample vector bundles on curves}, Nagoya Math. J. \textbf{43}
  (1971), 73--89.

\bibitem{hartshornealgebraic}
\bysame, \emph{Algebraic {G}eometry}, Springer, New York, 1977.

\bibitem{hartshornestablereflexive}
\bysame, \emph{Stable reflexive sheaves}, Math. Ann. \textbf{254} (1980),
  121--176.

\bibitem{hochstersolid}
M.~Hochster, \emph{Solid closure}, Contemp. Math. \textbf{159} (1994),
  103--172.

\bibitem{hoppe}
H.J. Hoppe, \emph{{G}enerischer {S}paltungstyp und zweite {C}hernklasse
  stabiler {V}ektorraum-b\"undel vom {R}ang $4$ auf ${\PP}^4$}, Math. Z.
  \textbf{187} (1984), 345--360.

\bibitem{hunekeapplication}
C.~Huneke, \emph{Tight closure and its applications}, CBMS Lecture Notes in
  Mathematics, vol.~88, AMS, Providence, 1996.

\bibitem{huybrechtslehn}
D.~Huybrechts and M.~Lehn, \emph{The {G}eometry of {M}oduli {S}paces of
  {S}heaves}, Viehweg, 1997.

\bibitem{gorensteinalgebras}
A.~Iarrobino and V.~Kanev, \emph{Power sums, {G}orenstein algebras, and
  determinantal loci}, Lecture Notes in Mathematics, vol. 1721,
  Springer-Verlag, Berlin, 1999, Appendix C by Iarrobino and Steven L. Kleiman.

\bibitem{kaiddissertation}
A.~Kaid, \emph{{On semistable and strongly semistable syzygy bundles}},
  PhD-thesis, University of Sheffield, 2009.

\bibitem{monodromygroups}
R.~Kasprowitz, \emph{Monodromy groups of vector bundles on p-adic curves},
  Preprint (2010), arXiv: 1005.5266.

\bibitem{kreuzerrobbiano}
M.~Kreuzer and L.~Robbiano, \emph{Computational {C}ommutative {A}lgebra},
  vol.~1, Springer, Berlin, 2000.

\bibitem{kreuzerrobbiano2}
\bysame, \emph{Computational {C}ommutative {A}lgebra}, vol.~2, Springer,
  Berlin, 2005.

\bibitem{langersurvey}
A.~Langer, \emph{Moduli spaces of sheaves and principal ${G}$-bundles},
  Algebraic geometry, Seattle 2005 (D.~Abramovich et~al., ed.), Proc. Symp.
  Pure Math., vol.~80, AMS, 2009, pp.~273--308.

\bibitem{langersemistablerestrictionnote}
\bysame, \emph{A note on restriction theorems for semistable sheaves}, Math. Res. Lett.
\textbf{17} (2010), no. 05, 823-832.

\bibitem{marquesthesis}
P.~Macias Marques, \emph{Stability and moduli spaces of syzygy bundles}, Tesi
  de doctorat, Universitat de Barcelona, 2009, arXiv:0909.4646.

\bibitem{marquesmiroroig}
P.~Macias Marques, and R.~M. Mir\'{o}-Roig, \emph{Stability of syzygy bundles}, 
Proc. Amer. Math. Soc., posted on January 28, 2011, PII S 0002-9939(2011)10745-7  (to appear in print).

\bibitem{tableslie}
W.~G. McKay, J.~Patera, and D.~W. Rand, \emph{Tables of representations of
  simple {L}ie algebras. {V}ol. {I}}, Universit\'e de Montr\'eal Centre de
  Recherches Math\'ematiques, Montreal, QC, 1990, Exceptional simple Lie
  algebras, With a foreword by R. V. Moody.

\bibitem{mehtaramanathanhomogeneous}
V.~B. Mehta and A.~Ramanathan, \emph{Homogeneous bundles in characteristic
  $p$}, Algebraic Geometry - open problems, Lect. Notes Math., vol. 997, 1982,
  pp.~315--320.

\bibitem{mehtaramanathanrestriction}
\bysame, \emph{Semistable sheaves on projective varieties and the restrictions
  to curves}, Math. Ann. \textbf{258} (1982), 213--226.

\bibitem{nori}
M.~V. Nori, \emph{On the representations of the fundamental group}, Compositio
  Math. \textbf{33} (1976), no.~1, 29--41.

\bibitem{okonekschneiderspindler}
C.~Okonek, M.~Schneider, and H.~Spindler, \emph{{V}ector {B}undles on {C}omplex
  {P}rojective {S}paces}, Birkh\"auser, 1980.

\bibitem{ottavianinotes}
G.~Ottaviani and J.~Valles, \emph{{Moduli of vector bundles and group action}},
  Notes, 2006, available at \url{http://www.dmi.unict.it/~ragusa/}.

\bibitem{peternellsubsheaves}
T.~Peternell, \emph{Subsheaves in the tangent bundle: {I}ntegrability,
  stability and positivity}, School on vanishing theorems and effective results
  in algebraic geometry (J.~P.~Demailly et~al., ed.), ICTP Lect. Notes.,
  vol.~6, The Abdus Salam International Centre for Theoretical Physics, 2001,
  pp.~285--334.

\bibitem{ramananramanathan}
S.~Ramanan and A.~Ramanathan, \emph{Some remarks on the instability flag},
  Tohoku Math. J. \textbf{36} (1984), 269--291.

\bibitem{schejastorch2}
G.~Scheja and U.~Storch, \emph{Lehrbuch der {A}lgebra}, vol.~2, Teubner,
  Stuttgart, 1988.

\end{thebibliography}
\providecommand{\bysame}{\leavevmode\hbox to3em{\hrulefill}\thinspace}
\providecommand{\MR}{\relax\ifhmode\unskip\space\fi MR }
\providecommand{\MRhref}[2]{%
  \href{http://www.ams.org/mathscinet-getitem?mr=#1}{#2}
}
\providecommand{\href}[2]{#2}

\end{document}